\newcommand{\lvt}{\left|\kern-1.35pt\left|\kern-1.3pt\left|}
\newcommand{\rvt}{\right|\kern-1.3pt\right|\kern-1.35pt\right|}
\newtheorem{thm}{Theorem}[section]
\newtheorem{cor}[thm]{Corollary}
\newtheorem{lem}[thm]{Lemma}
\newtheorem{prop}[thm]{Proposition}
\newtheorem{defn}[thm]{Definition}
\theoremstyle{remark}
\newtheorem{rem}{Remark}[section]
 \def\la{{\langle}}
 \def\ra{{\rangle}}
 \def\d{\mathrm{d}}
 \def\sph{{\mathbb{S}^{d-1}}}
 \def\a{{\alpha}}
 \def\b{{\beta}}
 \def\g{{\gamma}}
 \def\k{{\kappa}}
 \def\t{{\theta}}
 \def\l{{\lambda}}
 \def\o{{\omega}}
 \def\s{\sigma}
 \def\la{{\langle}}
 \def\ra{{\rangle}}
 \def\kb{{\mathbf k}}
 \def\Kb{{\mathbf K}}
 \def\Pb{{\mathbf P}}
 \def\CD{{\mathcal D}}
 \def\CH{{\mathcal H}}
 \def\CP{{\mathcal P}}
 \def\CV{{\mathcal V}}
 \def\BB{{\mathbb B}}
 \def\NN{{\mathbb N}}
 \def\RR{{\mathbb R}}
 \def\SS{{\mathbb S}}
 \def\VV{{\mathbb V}}
 \def\TT{{\mathbb T}}
 \def\ZZ{{\mathbb Z}}
      \def\proj{\operatorname{proj}}
\def\lla{\langle{\kern-2.5pt}\langle}      
\def\rra{\rangle{\kern-2.5pt}\rangle}
\newcommand{\wh}{\widehat}
\def\f{\frac}
\begin{document}

\title{Orthogonal polynomials and Fourier orthogonal series on a cone}

\author{Yuan Xu}
\address{Department of Mathematics\\ University of Oregon\\ 
Eugene, Oregon 97403-1222.}\email{yuan@uoregon.edu}

\thanks{The author was supported in part by NSF Grant DMS-1510296.}

\date{\today}
\keywords{Fourier orthogonal series, orthogonal polynomials, PDE, cone, surface}
\subjclass[2010]{33C50, 42C05, 42C10; Secondary 35P10, 41A10, 41A63}

\begin{abstract} 
Orthogonal polynomials and the Fourier orthogonal series on a cone in $\RR^{d+1}$ are studied. It is shown
that orthogonal polynomials with respect to the weight function $(1-t)^\g (t^2-\|x\|^2)^{\mu-\f12}$
on the cone $\VV^{d+1} = \{(x,t): \|x\| \le t \le 1\}$ are eigenfunctions of a second order differential operator, with 
eigenvalues depending only on the degree of the polynomials, and the reproducing kernels of these polynomials 
satisfy a closed formula that has a one-dimensional chacteristic. The latter leads to a convolution structure on the 
cone, which is then utilized to study the Fourier orthogonal series. This narrative also holds, in part, for more 
general classes of weight functions. Furthermore, analogous results are also established for orthogonal structure
on the surface of the cone. 
\end{abstract}

\maketitle

\section{Introduction}
\setcounter{equation}{0}

Given an appropriate weight function $\varpi$ defined on a domain $\Omega$ in $\RR^d$, one can construct 
an orthogonal basis of polynomials in $L^2(\Omega, \varpi)$ and study the Fourier series in orthogonal 
polynomials. For study beyond the abstract Hilbert space setting, it is necessary to consider special weight 
functions on regular domains to ensure manageable structure for orthogonal polynomials. 

The situation is best illustrated by the Fourier series in spherical harmonics, which are orthogonal with respect
to the Lebesgue measure on the unit sphere $\sph$ of $\RR^d$. The key ingredient for the 
Fourier analysis on the sphere is the addition formula of spherical harmonics (cf. \cite{DaiX,Stein}), which 
shows that the reproducing kernel of spherical harmonics of degree $n$ is equal to $Z_n(\la x,y\ra)$, where 
$Z_n$ is an ultraspherical polynomial of degree $n$. This closed-form expression of one-dimensional structure is 
utilized for just about every aspect of analysis on the unit sphere. Other most studied cases of orthogonal 
polynomials in $d$-variables are the classical ones in the following list \cite{DX}:
\begin{enumerate}[\, (1)]
\item Hermite:   $\varpi(x) = e^{-\|x\|^2}$ on $\RR^d$;
\item Laguerre:  $\varpi_\a(x) = x^\a e^{-|x|}$ on $\RR_+^d$; 
\item Ball:    $\varpi_\mu(x)= (1-\|x\|^2)^{\mu-\f12}$ on $\BB^d =\{x: \|x\| \le 1\}$;
\item Simplex: $\varpi_\k(x) = x_1^{\k_1} \ldots x_d^{\k_d} (1-|x|)^{\k_{d+1}}$ 
    on $\TT^d = \{x: x_1, \ldots, x_d \ge 0, |x| \le 1\}$, 
\end{enumerate}
where $|x| = x_1+\cdots + x_d$ and the parameters $\a\in \RR^d$, $\mu \in \RR$ and $\k \in \RR^{d+1}$ 
are chosen so that 
the weight functions are integrable. In each of these cases, several orthogonal polynomial bases can be 
constructed explicitly and they are classical, some of them could be traced back to the work of Hermite 
\cite{AF}. The Fourier series of classical orthogonal polynomials have been studied in depth in recent years; 
see \cite{DaiX, DX, Thanga} and references therein. In the case of the ball and the simplex, the essential 
ingredient of the study is a closed-form expression for the reproducing kernels of orthogonal polynomials, 
which can be regarded as analogues of the addition formula of spherical harmonics. 
  
The purpose of this paper is to study orthogonal polynomials and Fourier orthogonal series on a cone of 
revolution, which we assume to be 
$$
   \VV^{d+1} := \left \{ (x,t) \in \RR^{d+1}:  \|x\| \le t, \,\,  0 \le t \le b, \,\, x \in \RR^d \right\}, 
$$ 
and also on the surface of the cone 
$$
\VV_0^{d+1}:= \left \{ (x,t) \in \RR^{d+1}:  \|x\| = t, \,\,  0 \le t \le b, \,\, x \in \RR^d \right\} 
$$
for $d \ge 2$. For $d =1$, the cone $\VV^2$ is a 
triangle on the plane. For $d \ge 2$, orthogonal structure on these domains have not been studied in the 
literature as far as we are aware. On the surface of the cone $\VV_0^{d+1}$, 
we will define orthogonality with respect to $\varphi(t)\d \s(x,t)$, where $\s(x,t)$ is the surface measure of 
$\VV_0^{d+1}$. On the solid cone $\VV^{d+1}$ we will define the orthogonality with respect to an 
appropriate weight function $W$, which we choose to be of the form
$$
   W(x,t) = w(t) (t^2 - \|x\|^2)^{\mu-\f12}, \qquad \mu > -\tfrac12, \quad x\in \BB^d, \quad 0 \le t \le b,
$$ 
making use of the classical weight function $\varpi_\mu$ on the unit ball. For each $\varphi$ or $w$, we 
can construct orthogonal polynomials with respect to $W$ explicitly. We shall, however, choose special
$\varphi$ and $w$ so that the corresponding orthogonal polynomials possess two characteristic properties
of classical orthogonal polynomials. 

The first property is that there is a second order differential operator $\CD$, which is linear and has polynomial
coefficients, such that orthogonal polynomials are the eigenfunctions of $\CD$ with eigenvalues depending only 
on the degree of the polynomials. For non-degenerate inner product on a solid domain in two-variables, a classification 
of orthogonal polynomials with such a property in \cite{KS} shows that, up to affine transformation and for 
positive definite integrals, there are only five families, and they are the four classical ones, (1)--(4), together 
with the product of Hermite-Laguerre on $\RR \times \RR_+$. For more than two-variables, no classification is 
known but the four classical families and some of their tensor products have this property \cite{DX}. As we 
shall show in Section 3, two families of orthogonal polynomials on the cone $\VV^{d+1}$ for $d \ge 2$ 
possess this property, which are orthogonal with respect to $W(x,t)$ with $w(t) = (1-t)^\g$ and with 
$w(t) = e^{-t}$. Furthermore, for spherical harmonics, the property is satisfied with $\CD$ being the 
Laplace-Beltrami operator on $\sph$. Our results give two families of orthogonal polynomials on the surface 
of the cone that possess this property, which are orthogonal with respect to $\varphi(t) \d \s(x,t)$ with, 
somewhat surprisingly, $\varphi(t) = t^{-1} (1-t)^\g$ and $\varphi(t) = t^{-1} e^{-t}$, both of which have
a singularity at the apex of the cone. 
The second property is a closed-form expression for the reproducing kernel of orthogonal polynomials, akin to 
the addition formula of spherical harmonics. On the unit ball and on the simplex, the Fourier series in classical 
orthogonal polynomials have been studied actively in recent years (cf. \cite{DaiX,DX, KPX, PX} and the 
references therein), after such a formula was discovered about twenty years ago \cite{X97,X99}. This 
property holds for orthogonal polynomials on compact domains. We consider the cone $\VV^{d+1} =
 \{(x,t): \|x\| \le t, \, x \in \RR^d, \, t \le 1\}$, which is compact, and establish closed-form formulas for the 
reproducing kernel for orthogonal polynomials on $\VV^{d+1}$ with $w(t) = t^\b (1-t)^\g$, $\b \ge 0$ and 
$\g \ge -\f12$ and for $\varphi(t) = t^\b(1-t)^\g$ for $\b \ge -1$ and $\g \ge -\f12$ on $\VV_0^{d+1}$. 
In both cases, the closed-form formulas provide a one-dimensional structure 
for the kernel, which allows us to define a convolution structure on the cone. The projection operator of 
the Fourier orthogonal series of a function $f$ on the cone can be written as the convolution of $f$ and
the Jacobi kernel of one-variable. As a consequence, many properties of the Fourier orthogonal series on
the cone can be deduced from the Fourier-Jacobi series. As an example, we shall deduce sharp 
conditions for the convergence of the Ces\`aro means of the Fourier orthogonal series on the cone.  

The paper is organized as follows. In the next section, we recall definitions of orthogonal polynomials
of several variables and the Fourier orthogonal series, as well as several specific families of orthogonal
polynomials that will be used later. The orthogonal structure on the solid cone will be studied in 
Section 3 and the closed formula for the reproducing kernel will be established in Section 4, which leads 
to a translation operator and a convolution structure that will be discussed in Section 5. These are then
used in Section 6 to study the summability of the Fourier orthogonal series on the cone. Our development 
of orthogonal structure on the surface of the cone is parallel to that on the solid cone. Orthogonal 
structure on the surface will be studied in Section 7 and the closed formulas for the reproducing kernels 
will be derived in Section 8. The convolution structure on the surface and its application in the Fourier 
orthogonal series on the surface will be discussed in Section 9. Finally, our results could be extended to 
more general weight functions that include Dunkl's reflection invariant weight function as a factor, which
will be briefly discussed in Section 10. 

\section{Preliminary}
\setcounter{equation}{0}
We provide general framework on orthogonal polynomials and the Fourier orthogonal series in the first 
subsection and review several families of specific orthogonal polynomials that will be needed in 
several subsequent subsections. 

\subsection{Orthogonal polynomials and Fourier orthogonal expansion}
A general reference for this subsection is \cite[Chapter 3]{DX}. Let $W$ be a nonnegative weight 
function defined on a domain $\Omega$ of positive measure
in $\RR^d$ so that $\int_\Omega W(x) \d x > 0$, and let $\la \cdot,\cdot\ra$ be an inner product 
defined by 
$$
       \la f,g\ra = b_W \int_\Omega f(x) g(x) W(x) dx,
$$
where $b_W$ is a normalization constant such that $\la 1,1\ra =1$. Denote by $\Pi^d$ the space of
polynomials of $d$-variables and by $\Pi_n^d$ the subspace of polynomials of degree at most $n$
in $\Pi^d$. A polynomial $P \in \Pi_n^d$ is called orthogonal with respect to the inner product 
$\la\cdot,\cdot \ra$ if $\la P, Q\ra =0$ for all polynomials $Q\in \Pi_{n-1}^d$. Let $\CV_n^d$ be the 
space of orthogonal polynomials of degree $n$. It is known that 
$$
   \dim \Pi_n^d = \binom{n+d}{n} \quad\hbox{and}\quad \dim \CV_n^d = \binom{n+d-1}{n}.
$$

Let $\NN$ be the set of natural integers and let $\NN_0= \NN \cap \{0\}$. The space $\CV_n^d$ 
can have many distinct bases. Let $\{P_{\kb}^n :|\kb| = n, \, \kb \in \NN_0^d\}$
be an orthogonal basis of $\CV_n^d$. For $f \in L^2(\Omega)$, the Fourier orthogonal series is 
defined by 
$$
   f = \sum_{n=0}^\infty \sum_{|\kb| = n} \wh f_{\kb}^n P_{\kb}^n = \sum_{n=0}^\infty \proj_n f,
   \qquad \hbox{where} 
      \quad \wh f_{\kb}^n = \frac{\la f, P_{\kb}^n\ra} {\la P_{\kb}^n, P_{\kb}^n\ra},
$$
and $\proj_n: L^2 \mapsto \CV_n^d$ denotes the orthogonal projection operator, which can be written
as an integral against the reproducing kernel $\Pb_n$ of the space $\CV_n^d$, 
\begin{equation} \label{eq:projOP}
   \proj_n f(x) = \sum_{|\kb| = n}  \wh f_{\kb}^n P_{\kb}^n= \int_\Omega f(y) \Pb_n(x,y) W(y)\d y. 
\end{equation}
In terms of the basis $\{P_{\kb,n}:|\kb| = n\}$, the reproducing kernel can be written as 
\begin{equation} \label{eq:reprodOP}
  \Pb_n(x,y) = \sum_{|\kb| = n} \frac{P_{\kb}^n(x) P_{\kb}^n(y)} {\la P_{\kb}^n, P_{\kb}^n\ra}.
\end{equation}
The kernel, however, is uniquely defined and is independent of the choice of the orthogonal basis. 

For our study of orthogonal structure in and on the cone, we shall need various properties of spherical
harmonics, orthogonal polynomials on the unit ball in $\RR^d$ and on the triangle $\TT^2$, as well
as the Jacobi polynomials and Gegenbauer polynomials on $[-1,1]$. We shall collect what will be 
needed in subsequent subsections. 

\subsection{Classical orthogonal polynomials}
We fix notations that will be used throughout this paper. For $\a,\b > -1$, the Jacobi weight function 
is denoted by 
$$
   w_{\a,\b} (x) = (1-x)^\a (1+x)^\b, \qquad - 1< x< 1.
$$
Its normalization constant $c'_{\a,\b}$, defined by $c'_{\a,\b}  \int_{-1}^1 w_{\a,\b} (x)dx = 1$, is given by
\begin{equation}\label{eq:c_ab}
   c'_{\a,\b} = \frac{1}{2^{\a+\b+1}} c_{\a,\b} \quad\hbox{with} \quad c_{\a,\b} := \frac{\Gamma(\a+\b+2)}{\Gamma(\a+1)\Gamma(\b+1)}.
\end{equation}
The Jacobi polynomials $P_n^{(\a,\b)}$ are hypergeometric functions given by \cite[(4.21.2)]{Sz}
$$
  P_n^{(\a,\b)}(x) = \frac{(\a+1)_n}{n!} {}_2F_1 \left (\begin{matrix} -n, n+\a+\b+1 \\\
      \a+1 \end{matrix}; \frac{1-x}{2} \right),
$$
and they are orthogonal polynomials with respect to $w_{\a,\b}$ satisfying 
$$
  c_{\a,\b}' \int_{-1}^1 P_n^{(\a,\b)}(x) P_m^{(\a,\b)}(x) w_{\a,\b}(x) \d x = h_n^{(\a,\b)} \delta_{n,m},
$$
where 
\begin{align} \label{eq:JacobiNorm}
  h_n^{(\a,\b)} = \frac{(\a+1)_n (\b+1)_n(\a+\b+n+1)}{n!(\a+\b+2)_n(\a+\b+2 n+1)}.
\end{align}
The polynomial $P_n^{(\a,\b)}$ satisfies a differential equation \cite[(4.2.1)]{Sz}
\begin{equation} \label{eq:JacobiDE}
 (1-x^2) u'' - (\a-\b+(\a+\b+2)x )u' + n (n+\a+\b+1)u =0.
\end{equation}

We will use the Gegenbauer polynomials $C_n^\l$, also called ultraspherical polynomials when $\l$ is
half-integer. For $\l > -\f12$, let $w_\l$ be the weight function
$$
  w_\l(x) = (1-x^2)^{\l-\f12}, \quad -1< x<1.
$$
Its normalization constant $c_{\l}$, defined by $c_{\l}  \int_{-1}^1 w_{\l} (x)dx = 1$, is given by
\begin{equation}\label{eq:c_l}
    c_{\l} := \frac{\Gamma(\l+1)}{\Gamma(\f12)\Gamma(\l+\f12)}.
\end{equation}
The Genenbauer polynomials $C_n^\l$ are orthogonal with respect to $w_\l$,
\begin{equation} \label{eq:GegenNorm}
  c_{\l} \int_{-1}^1 C_n^{\l}(x) C_m^{\l}(x) w_\l(x) \d x = h_n^{\l} \delta_{n,m}, \qquad h_n^\l = \frac{\l}{n+\l}C_n^\l(1),
\end{equation}
and are normalized so that $C_n^\l(1) = \frac{(2\l)_n}{n!}$. The polynomial $C_n^\l$ is a constant multiple of 
the Jacobi polynomial $P_n^{(\l-\f12,\l-\f12)}$ and it is also related to the Jacobi polynomial by a quadratic transform \cite[(4.7.1)]{Sz}
\begin{equation} \label{eq:Jacobi-Gegen}
  C_{2n}^\l (x) = \frac{(\l)_n}{(\f12)_n} P_n^{(\l-\f12,\l-\f12)}(2x^2-1).
\end{equation}

For convenience, see \eqref{eq:additionF} below, we shall define 
\begin{equation} \label{eq:Zn} 
Z_n^\l(t):=  \frac{C_n^\l(1)C_n^\l(t)}{h_n^\l} = \frac{n+\l}{\l} C_n^\l(t), \quad \l >  0, \qquad Z_n^0(t):= \begin{cases} 2 T_n(t), & n \ge 1 \\ 1, & n =0 \end{cases},
\end{equation}
where $T_n(x)  = \arccos (\cos x)$ is the Chebyshev polynomial, and use this notation throughout the rest of this paper.

We will also need the Laguerre polynomials $L_n^\a$. For $\a > -1$, these polynomials are orthogonal
with respect to $x^\a e^{-x}$ on $\RR_+ = [0,\infty)$,
\begin{equation} \label{eq:LaguerreNorm}
  \frac{1}{\Gamma(\a+1)} \int_{0}^\infty L_n^{\a}(x) L_m^{\a}(x) x^\a e^{-x} \d x =  \frac{(\a+1)_n}{n!} \delta_{n,m}.\end{equation}

\subsection{Spherical harmonics}
There are maby sources for this topic, we follow \cite[Chapter 1]{DaiX}. Let $\CP_n^d$ denote the 
space of homogeneous polynomials of degree $n$ in $d$-variables. 
A spherical harmonics $Y$ of degree $n$ is an element of $\CP_n^d$ that satisfies $\Delta Y =0$, 
where $\Delta$ is the Laplace operator of $\RR^d$. If $Y \in \CP_n^d$, then $Y(x) = \|x\|^n Y(x')$, 
$x' = x/\|x\| \in \sph$, so that $Y$ is determined by its restriction on $\sph$. Let $\CH_n^d$ denote
the space of spherical harmonics of degree $n$. Then 
\begin{equation} \label{eq:sphHn-dim}
 \dim \CH_n^d =\binom{n+d-1}{n} - \binom{n+d-3}{n-2} = \binom{n+d-2}{n} +\binom{n+d-3}{n-1}, 
\end{equation}   
and we also have $\dim \CP_n^d = \binom{n+d-1}{n}$.   
Spherical harmonics of different degrees are orthogonal on the sphere. For $n \in \NN_0$ let
$\{Y_\ell^n: 1 \le \ell \le \dim \CH_n^d\}$ be an orthonormal basis of $\CH_n^d$ in this subsection; then 
$$
   \frac{1}{\o_d} \int_\sph Y_\ell^n (\xi) Y_{\ell'}^m (\xi) d\s(\xi) = \delta_{\ell,\ell'} \delta_{m,n},
$$
where $\o_d$ denotes the surface area $\o_d = 2 \pi^{\f{d}{2}}/\Gamma(\f{d}{2})$ of $\sph$. Let $\Delta_0$
be the Laplace--Beltrami operator on the sphere, which is the restriction of $\Delta$ on the 
unit sphere. 
Spherical harmonics are eigenfunctions of this operator \cite[(1.4.9)]{DaiX}, 
\begin{equation} \label{eq:sph-harmonics}
     \Delta_0 Y = -n(n+d-2) Y, \qquad Y \in \CH_n^d;
\end{equation}
the eigenvalues depend only on the degree $n$ of $\CH_n^d$. Moreover, spherical harmonics 
satisfy an addition formula \cite[(1.2.3) and (1.2.7)]{DaiX},
\begin{equation} \label{eq:additionF}
   \sum_{\ell =1}^{\dim \CH_n^d} Y_\ell^n (x) Y_\ell^n(y) = Z_n^{\f{d-2}{2}} (\la x,y\ra), \quad x, y \in \sph, 
\end{equation}
where $Z_n^\l$ is defined in \eqref{eq:Zn}. For $f \in L^2(\sph)$, its Fourier orthogonal series in spherical
harmonics is defined by 
$$
f = \sum_{n=0}^\infty \proj_n f = \sum_{n=0}^\infty \sum_{\ell=0}^{ \dim \CH_n^d} \wh f_\ell^n Y_\ell^n (\xi),    
   \qquad  \wh f_\ell^n = \frac{1}{\o_d} \int_\sph f(y) Y_\ell^n(y) \d \s, 
$$
where the orthogonal projection $\proj_n: L^2(\sph) \to \CH_n^d$ can be written as an integral
$$
   \proj_n f(x) = \sum_{\ell = 1}^{\dim \CH_n^d} \wh f_{\ell,n} Y_\ell^n  
     = \frac{1}{\o_d}\int_{\sph} f(y) \Pb_n(x,y) \d\s(y')
$$
in terms of the reproducing kernel $\Pb_n(x,y) = \sum_\ell Y_\ell^n(x)Y_\ell^n(y)$. The addition formula
\eqref{eq:additionF} provides a closed formula for the reproducing kernel and shows that it has a one-dimensional
structure. This formula plays an essential role is the study of Fourier orthogonal series on the sphere; see,
for example, \cite{DaiX, Stein}. 

\subsection{OP on the unit ball} 
Classical orthogonal polynomials on the unit ball $\BB^d$ are orthogonal with respect to the inner product
$$
  \la f,g\ra_{\mu} =b_\mu^B \int_{\BB^d} f(x) g(x) \varpi_\mu(x) dx,
$$
where $\varpi_\mu$ is the weight function 
\begin{equation}\label{eq:weightB}
 \varpi_\mu(x):= (1-\|x\|^2)^{\mu-\f12},  \quad \mu > -\tfrac12, \quad \hbox{and} \quad 
     b_\mu^B = \frac{\Gamma(\mu+\f{d+1}{2})}{\pi^{\f{d}{2}}\Gamma(\mu+\f12)}
\end{equation}
is the normalization 
constant so that $\la 1,1\ra_\mu =1$. 
Let $\CV_n^d(\varpi_\mu)$ be the space of orthogonal polynomials of degree $n$ with respect to $\varpi_\mu$.
An orthogonal basis of $\CV_n^d(\varpi_\mu)$ can be given explicitly in terms of the Jacobi polynomials and
spherical harmonics. For $ 0 \le m \le n/2$, let $\{Y_\ell^{n-2m}: 1 \le \ell \le \dim \CH_{n-2m}^d\}$ be an 
orthonormal basis of $\CH_{n-2m}^d$. Define \cite[(5.2.4)]{DX}
\begin{equation}\label{eq:basisBd}
  P_{\ell, m}^n (x) = P_m^{(\mu-\f12, n-2m+\f{d-2}{2})} \left(2\|x\|^2-1\right) Y_{\ell, n-2m}(x).
\end{equation}
Then $\{P_{\ell,m}^n: 0 \le m \le n/2, 1 \le \ell \le \dim \CH_{n-2m}^d\}$ is an orthogonal basis of 
$\CV_n^d(\varpi_\mu)$. For other explicit orthogonal bases of $\CV_n^d(\varpi)$, see \cite[Chapter 5]{DX}. 

As in the case of spherical harmonics, classical orthogonal polynomials on the unit ball are eigenfunctions 
of a second order differential operator: for $u \in \CV_n^d(\varpi_\mu)$,  
\begin{equation}\label{eq:diffBall}
  \left( \Delta  - \la x,\nabla \ra^2  - (2\mu+d-1) \la x ,\nabla \ra \right)u = - n(n+2\mu+ d-1) u.
\end{equation}
Let $\Pb_n(\varpi_\mu;\cdot,\cdot)$ be the reproducing kernel of the space $\CV_n^d(\varpi_\mu)$, as defined
in \eqref{eq:reprodOP}. Then the kernel satisfies a closed formula \cite[(5.2.7)]{DX} for $\mu \ge 0$,
\begin{align}\label{eq:PnBall}
 \Pb_n(\varpi_\mu;x,y) = c_{\mu-\f12}
      \int_{-1}^1 Z_n^{\mu+\f{d-1}{2}} & \left(\la x,y\ra+ t \sqrt{1-\|x\|^2}\sqrt{1-\|y\|^2} \right) \\
       &  \times  (1-t^2)^{\mu-1} \d t, \notag
\end{align}
where $c_{\mu-\f12}$ is give by \eqref{eq:c_l} and the identity holds when $\mu =0$ under the limit 
\begin{equation}\label{eq:limit-int}
  \lim_{\mu \to 0}  c_{\mu-\f12} \int_{-1}^1 f(t) (1-t^2)^{\mu-1} dt = \frac{f(1) + f(-1)}{2}.  
\end{equation}

\subsection{Jacobi polynomials on the triangle} 
In two variables, the cone $\VV^2$ reduces to a triangle, but not the standard triangle 
$\TT^2 = \{(x_1,x_2): x_1, x_2 \ge 0, x_1+x_2 \le 1\}$. The classical orthogonal polynomials on the 
triangle are orthogonal with respect to the weight function
$$
  \varpi_{\a,\b,\g}(x_1,x_2) = x_1^{\a} x_2^{\b} (1-x_1-x_2)^\g, \qquad \a,\b,\g > -1, 
$$
on $\TT^2$. Let $\CV_n^2( \varpi_{\a,\b,\g})$ denote the space of orthogonal polynomials with respect 
to $ \varpi_{\a,\b,\g}$. Several bases of $\CV_n^2( \varpi_{\a,\b,\g})$ can be given explicitly in terms of 
the Jacobi polynomials. One of them consists of \cite[Prop. 2.4.2]{DX}
\begin{equation} \label{eq:triangleOP}
  P^{\a,\b,\g}_{k,n} (x_1,x_2) = P_{n-k}^{(2k+\a+\b+1,\g)}(1-2x_1-2x_2) (x_1+x_2)^k P_k^{(\b,\a)} \left(\frac{x_1-x_2}{x_1+x_2}\right) 
\end{equation}
for $0 \le k \le n$. The $L^2$ norms of these polynomials are given by
\begin{align} \label{eq:triangleOPnorm}
    h_{k,n}^{(\a,\b,\g)} = \frac{c_{\a+\b+1,\g}}{c_{\a+\b+2k+1,\g}} h_{n-k}^{(\a+\b+2k+1,\g)} h_k^{(\b,\a)}, 
\end{align}
where $c_{\a,\b}$ is given by \eqref{eq:c_ab} and $h_n^{(\a,b)}$ is given by \eqref{eq:JacobiNorm}. 

In terms of this basis, the reproducing kernel $\Pb_n( \varpi_{\a,\b,\g}; \cdot,\cdot)$ is given by 
\begin{align} \label{eq:trianglePn} 
 \Pb_n( \varpi_{\a,\b,\g}; x,y) = & \sum_{k=0}^n 
    \frac{P_{n-k}^{(2k+\a+\b+1,\g)}(1-2x_1-2x_2)P_{n-k}^{(2k+\a+\b+1,\g)}(1-2y_1-2y_2)}{h_{k,n}^{(\a,\b,\g)}} \notag \\
       & \times (x_1+x_2)^k (y_1+y_2)^k P_k^{(\b,\a)} \left(\frac{x_1-x_2}{x_1+x_2}\right)
         P_k^{(\b,\a)} \left(\frac{y_1-y_2}{y_1+y_2}\right). 
\end{align}
For $\a,\b,\g \ge - \frac12$, the kernel satisfies a closed formula in a triple integral \cite[(5.3.5)]{DX}
\begin{align} \label{eq:trianglePn2} 
  \Pb_n( \varpi_{\a,\b,\g}; x,y) = c_{\a}c_{\b}c_{\g}& \int_{[-1,1]^3}  Z_{2n}^{\a+\b+\g+2} (\eta(x,y,t)) \\
      &  \times (1-t_1^2)^{\a-\f12} (1-t_2^2)^{\b-\f12} (1-t_3^2)^{\g-\f12}  \d t, \notag
\end{align}
where 
\begin{align} \label{eq:zetaT} 
  \eta(x,y,t) = \sqrt{x_1 y_1} \,t_1 +  \sqrt{x_2 y_2}\, t_2 + \sqrt{(1-x_1-x_2) (1-y_1-y_2)} \,t_3. 
\end{align}
 
For further results and discussions on classical orthogonal polynomials on the ball and the triangle, see
Sections 5.2, 2.4 and 5.3 of \cite{DX}. 

\section{Orthogonal polynomials on the cone}
\setcounter{equation}{0}

We consider orthogonal polynomials on the cone $\VV^{d+1}$ with respect to the weight function
$$
  W(x,t) = w(t) (t^2-\|x\|^2)^{\mu-\f12}, \qquad \|x\| \le t, \,\, 0 \le t \le b, 
$$
for two families of $w$. We call the first case Jacobi polynomials on the cone, for which $w(t) = (1-t)^\b$,
and call the second case Laguerre polynomials on the cone, for which $w(t) = e^{-t}$. Each of these two 
families are eigenfunctions of a second order differential operator with eigenvalues depending only on 
the total degree of the polynomials.

\subsection{Jacobi polynomials on the cone}
We consider the weight functions 
\begin{align} \label{eq:W-cone}
   W_{\mu,\b,\g}(x,t): = (t^2-\|x\|^2)^{\mu-\f12} t^\b (1-t)^\g, \quad   \mu > -\tfrac12, \, \b> -1,\, \g > -1, 
\end{align}
defined on the solid cone 
$$
\VV^{d+1} =\{(x,t): \|x\| \le t \le 1, \, x \in \BB^d\}
$$ 
and define the inner product 
$$
   \la f,g \ra_{\mu,\b, \g} : = b_{\mu,\b,\g} \int_{\VV^{d+1}} f(x,t) g(x,t) W_{\mu,\b,\g}(x,t) \d x\, \d t,
$$
where $b_{\mu,\b,\g}$ is the constant chosen so that $\la 1, 1 \ra_{\mu,\b,\g} =1$. For $(x,t) 
\in \VV^{d+1}$, we let $x = t u$ with $u \in \BB^d$, so that $\d x = t^d \d u$ and
\begin{align} \label{eq:int-V}
  \int_{\VV^{d+1}} f(x,t) W_{\mu,\b,\g} (x,t) \d x \, \d t \, & = \int_0^1 \int_{\|x\| \le t} f(x,t) (t^2- \|x\|^2)^{\mu-\f12} t^\b (1-t)^\g 
     \d x \,\d t  \notag \\
      & = \int_0^1 \int_{\BB^d} f(t u, t) t^{2\mu+\b+d-1}  (1-t)^\g\d t  (1-\|u\|^2)^{\mu -\f12} \d u, 
\end{align}
which implies, in particular, that
$$
b_{\mu,\b,\g} = c_{2\a,\g} b_\mu^B, \qquad \a = \mu+\frac{\b+d-1}{2},
$$
where $c_{\a,\b}$ is defined in \eqref{eq:c_ab} and $b_\mu^B$ is the normalized constant of 
$\varpi_\mu$ on $\BB^d$. Let $\CV_n^{d+1}(W_{\mu,\b,\g}) := \CV_n(\VV^{d+1}, W_{\mu,\b, \g})$ 
be the space of orthogonal polynomials of degree exactly $n$ with respect to the inner product 
$\la \cdot,\cdot\ra_{\mu,\b,\g}$.  

\begin{prop}\label{prop:OPconeJ} 
For $m =0,1,2,\ldots$, let $\{P_\kb^m(\varpi_\mu): |\kb| = m, \kb \in \NN_0^d\}$ denote an orthonormal basis 
of $\CV_m^d(\varpi_\mu)$ on the unit ball $\BB^d$. Define $\alpha: = \mu+\frac{\b+d-1}{2}$ and 
\begin{equation} \label{eq:coneJ}
  Q_{m,\kb}^n(x,t):= P_{n-m}^{(2\a+2m, \g)}(1- 2t) t^m P_\kb^m\left(\varpi_\mu; \frac{x}{t}\right).
\end{equation}
Then $\{Q_{m,\kb}^n(x,t): |\kb| = m, \, 0 \le m\le n\}$ is an orthogonal basis of $\CV_n(\VV^{d+1},W_{\mu,\b,\g})$ 
and the norm of $Q_{m, \kb}^n$ is given by 
\begin{equation} \label{eq:coneJnorm}
    H_{m,n}^{(\a,\g)}:=  \la Q_{m, \kb}^n, Q_{m, \kb}^{n} \ra_{\mu,\b,\g} 
        =  \frac{ c_{2\a,\g}} {c_{2\a+2m,\g} } h_{n-m}^{(2\a+2m, \g)},
\end{equation}
where $h_m^{(\a,\g)}$ is the norm square of the Jacobi polynomial in \eqref{eq:JacobiNorm} 
and $c_{\a,\b}$ is in \eqref{eq:c_ab}.
\end{prop}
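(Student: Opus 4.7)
The plan is to exploit the change of variables $x = tu$ from \eqref{eq:int-V}, which separates the cone integral into a radial integral over $t \in [0,1]$ and a ball integral over $u \in \BB^d$. Under this substitution, one has $Q_{m,\kb}^n(tu,t) = P_{n-m}^{(2\a+2m,\g)}(1-2t)\, t^m\, P_\kb^m(\varpi_\mu;u)$, so the $u$-dependence and $t$-dependence decouple completely. This is the whole engine of the proof.

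First I would check that each $Q_{m,\kb}^n$ is indeed a polynomial in $(x,t)$ of total degree $n$: writing $P_\kb^m(\varpi_\mu;y) = \sum_{|\boldsymbol\sigma|\le m} c_{\boldsymbol\sigma} y^{\boldsymbol\sigma}$, we see $t^m P_\kb^m(\varpi_\mu;x/t) = \sum_{|\boldsymbol\sigma|\le m} c_{\boldsymbol\sigma}\, t^{m-|\boldsymbol\sigma|} x^{\boldsymbol\sigma}$ is a genuine polynomial of degree $m$ in $(x,t)$, and multiplication by the Jacobi factor $P_{n-m}^{(2\a+2m,\g)}(1-2t)$ bumps the total degree up to exactly $n$.

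Next, for the orthogonality, I would apply \eqref{eq:int-V} directly to $\la Q_{m,\kb}^n, Q_{m',\kb'}^{n'}\ra_{\mu,\b,\g}$. The $u$-integral
\[
  \int_{\BB^d} P_\kb^m(\varpi_\mu;u)\, P_{\kb'}^{m'}(\varpi_\mu;u)\, (1-\|u\|^2)^{\mu-\f12}\, \d u
\]
equals $\frac{1}{b_\mu^B}\delta_{m,m'}\delta_{\kb,\kb'}$ by orthonormality on the ball, and this simultaneously forces $m=m'$ and collapses the spherical indices. When $m=m'$ and $\kb=\kb'$, the remaining $t$-integral carries exponent $2\mu+\b+d-1+2m = 2\a+2m$, so it becomes
\[
  \int_0^1 P_{n-m}^{(2\a+2m,\g)}(1-2t)\, P_{n'-m}^{(2\a+2m,\g)}(1-2t)\, t^{2\a+2m}(1-t)^\g\, \d t,
\]
which, after the affine change $s=1-2t$, is precisely the Jacobi orthogonality integral for parameters $(2\a+2m,\g)$ on $[-1,1]$. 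The Jacobi orthogonality relation and \eqref{eq:JacobiNorm} then yield $\delta_{n,n'}$ together with the factor $h_{n-m}^{(2\a+2m,\g)}/c_{2\a+2m,\g}$. Combining with the constant $b_{\mu,\b,\g}/b_\mu^B = c_{2\a,\g}$ noted in the paper gives exactly \eqref{eq:coneJnorm}.

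Finally, to conclude that the $Q_{m,\kb}^n$ span all of $\CV_n^{d+1}(W_{\mu,\b,\g})$, I would count: there are $\sum_{m=0}^n \dim \CV_m^d(\varpi_\mu) = \sum_{m=0}^n \binom{m+d-1}{m} = \binom{n+d}{n}$ basis candidates by the hockey-stick identity, matching $\dim \CV_n^{d+1}$. Since they are mutually orthogonal and of the correct total count, they form a basis. The step I expect to require the most care is the normalization bookkeeping between $c_{\a,\b}$ and $c'_{\a,\b}$ in the $t$-integral after the substitution $s = 1 - 2t$, because the factors of $2^{2\a+2m+\g+1}$ must cancel exactly against the relation $c'_{\a,\b} = 2^{-\a-\b-1}c_{\a,\b}$ to land on the clean ratio $c_{2\a,\g}/c_{2\a+2m,\g}$; everything else is routine once the separation of variables is in place.
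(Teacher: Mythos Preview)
Your proposal is correct and follows essentially the same route as the paper: separation of variables via \eqref{eq:int-V}, ball orthonormality for the $u$-integral, Jacobi orthogonality for the $t$-integral, and the hockey-stick dimension count. You supply a bit more detail on the polynomiality check and the normalization constants, but the argument is the same.
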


\begin{proof}
It is clear that $Q_{m, \kb}^n$ is a polynomial of degree $n$ in $(x,t)$. Using \eqref{eq:int-V} to separate 
variables, it is easy to see that $ \la Q_{m, \kb}^n, Q_{m', \kb'}^{n'} \ra_{\mu,\b,\g} = H_{m,n} 
\delta_{n,n'} \delta_{m,m'} \delta_{\ell,\ell'}$. Furthermore, since $P_{\kb}^m$ is 
orthonormal, we obtain
\begin{align*}
 \la Q_{m, \kb}^n, Q_{m, \kb}^{n} \ra_{\mu,\b,\g} \, & = 
  c_{\a,\g} \int_0^1 \left|P_{n-m}^{(2\a+2m, \g)}(1-2t)\right|^2 t^{2\a+2m} 
           (1-t)^\g\d t  \\
           & = \frac{ c_{2\a,\g}} {c_{2\a+2m,\g} } h_{n-m}^{(2\a+2m, \g)}, 
\end{align*}
since $h_n^{(\a,\g)}$ is defined with respect to the normalized weight and $\frac{c_{\a,\b}}{2^{\a+\b+1}}$ is
the normalization constant of the Jacobi weight on $[-1,1$]. Finally, the cardinality of the set 
$\{Q_{m,\kb}^n(x,t): |\kb| = m, \, 0 \le m\le n\}$ is equal to 
$$
 \sum_{m=0}^n \dim \CV_m^d(\varpi_\mu) = \sum_{m=0}^n \binom{m+d-1}{m} = \binom{n+d}{d} = 
 \dim \CV_n(\VV^{d+1},W_{\mu,\b,\g}),
$$
which verifies that the set is an orthonormal basis of $\CV_n(\VV^{d+1},W_{\mu,\b,\g})$.
\end{proof}

It is not difficult to see that the structure of the orthogonal basis in \eqref{eq:coneJ} holds for 
a generic weight function $w$ instead of the Jacobi weight; see \cite{OX3}. However, it is the 
polynomials in \eqref{eq:coneJ} that share properties of classical orthogonal polynomials on the unit ball. 

The weight function $W_{\mu,\b,\g}$ is a combination of the Jacobi weight on $[0,1]$ and the classical 
weight function on the unit ball. From the point of view of differential equations satisfied by the classical 
orthogonal polynomials, the case $\b = 0$ stands out. We shall define $W_{\mu,\g} = W_{\mu,0,\g}$. 
More explicitly, 
\begin{align} \label{eq:W-mgCone}
   W_{\mu,\g}(x,t) = (t^2-\|x\|^2)^{\mu-\f12} (1-t)^\g, \quad  \mu > -\tfrac12, \, \g > -1. 
\end{align}
Our next theorem shows that $\CV_n(\VV^{d+1},W_{\mu,\g})$ is an eigenspace of a second order 
differential operator. 

\begin{thm} \label{thm:DEconeJ} 
Let $\mu > -\tfrac12$, $\g > -1$ and $n \in \NN_0$. Then every $u \in \CV_n(\VV^{d+1},W_{\mu,\g})$ 
satisfies the differential equation
\begin{equation}\label{eq:cone-eigen}
   \CD_{\mu,\g} u =  -n (n+2\mu+\g+d) u,
\end{equation}
where $\CD_{\mu,\g} = \CD_{\mu,\g}(x,t)$ is the second order linear differential operator
\begin{align*}
  \CD_{\mu,\g} : = & \, t(1-t)\partial_t^2 + 2 (1-t) \la x,\nabla_x \ra \partial_t + \sum_{i=1}^d(t - x_i^2) \partial_{x_i}^2
        - 2 \sum_{i<j } x_i x_j \partial_{x_i} \partial_{x_j}  \\ 
  &   + (2\mu+d)\partial_t  - (2\mu+\g+d+1)( \la x,\nabla_x\ra + t \partial_t),  
\end{align*}
where $\nabla_x$ and $\Delta_x$ denote the gradient and the Laplace operator in $x$-variable.
\end{thm}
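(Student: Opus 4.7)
The plan is to verify the eigenvalue equation directly on the explicit basis $\{Q_{m,\kb}^n\}$ from Proposition~\ref{prop:OPconeJ}. Write $Q_{m,\kb}^n(x,t) = p(t) R(x,t)$ with $p(t) = P_{n-m}^{(2\a+2m,\g)}(1-2t)$ and $R(x,t) = t^m P_\kb^m(\varpi_\mu; x/t)$. The key structural observation is that $R$ is homogeneous of degree $m$ in $(x,t)$, which yields, via the Euler identity and one $t$-derivative of it,
$$\la x, \nabla_x R\ra = mR - tR_t, \qquad \la x,\nabla_x R_t\ra = (m-1)R_t - tR_{tt},$$
as well as $\sum_{i,j} x_i x_j R_{x_i x_j} = \la x,\nabla_x\ra^2 R - \la x,\nabla_x R\ra = m(m-1)R - 2(m-1)tR_t + t^2 R_{tt}$.

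Three ingredients then drive the calculation. First, the Jacobi equation \eqref{eq:JacobiDE} for $p$, rewritten in the variable $t$, gives
$$ t(1-t) p''(t) = [-(2\a+2m+1) + (2\a+2m+\g+2)t]\, p'(t) - (n-m)(n+m+2\a+\g+1) p(t). $$
Second, the ball equation \eqref{eq:diffBall} applied to $P_\kb^m(\varpi_\mu;\cdot)$ at $y = x/t$, multiplied through by $t^m$, becomes
$$ t^2 \Delta_x R - \sum_{i,j=1}^d x_i x_j R_{x_i x_j} - (2\mu+d) \la x,\nabla_x R\ra = -m(m+2\mu+d-1) R, $$
which, after using the homogeneity identities above, collapses to the auxiliary relation
$$ t \Delta_x R = t R_{tt} - (2m+2\mu+d-2) R_t. $$
Third, I expand $\CD_{\mu,\g}(pR)$ via the product rule and group the result by derivatives of $R$, namely $R, R_t, R_{tt}, \la x,\nabla_x R\ra, \la x,\nabla_x R_t\ra$ and $\sum x_i x_j R_{x_i x_j}$; the last three are immediately eliminated by homogeneity, and the $x$-Laplacian piece inside $B := t\Delta_x - \sum x_i x_j \partial_{x_i}\partial_{x_j}$ is eliminated by the auxiliary relation.

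After these substitutions a direct bookkeeping shows that (a) the coefficients of $R_{tt}$ and $R_t$ both vanish, (b) the coefficient of $p' R$ vanishes, and (c) the remaining coefficient of $pR$ equals $-[(n-m)(n+m+\lambda) + m(m+\lambda)]$ with $\lambda = 2\mu+\g+d$, which telescopes to $-n(n+\lambda)$. The cancellations in (a) and (b) depend critically on the identity $2\a+1 = 2\mu+d$, that is, on the choice $\b = 0$; this is precisely the algebraic reason the paper singles out $W_{\mu,\g}$ from the more general $W_{\mu,\b,\g}$. The main obstacle is the bookkeeping itself: $\CD_{\mu,\g}$ has six summands and $R$ is not separated in $(x,t)$, so tracing how each summand contributes to each derivative-group of $R$ must be done meticulously. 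A more conceptual alternative is (i) to observe that $\CD_{\mu,\g}$ preserves the filtration $\Pi_n^{d+1}$ and admits a self-adjoint divergence form $W_{\mu,\g}^{-1}\sum_{i,j}\partial_i(W_{\mu,\g}\, a_{ij}\partial_j)$ with $a_{ij} = t\delta_{ij}-x_i x_j$ for $i,j\le d$, $a_{i,d+1}=(1-t)x_i$, $a_{d+1,d+1}=t(1-t)$ (the matrix degenerating along $\partial\VV^{d+1}$, so boundary terms vanish), whence $\CD_{\mu,\g}:\CV_n\to\CV_n$, and then (ii) to note that the degree-preserving part of $\CD_{\mu,\g}$ equals $-E^2-(2\mu+\g+d)E$ with $E=t\partial_t+\la x,\nabla_x\ra$ the Euler operator in $d+1$ variables, so the quotient isomorphism $\Pi_n^{d+1}/\Pi_{n-1}^{d+1}\cong\CV_n$ instantly produces the eigenvalue $-n(n+2\mu+\g+d)$.
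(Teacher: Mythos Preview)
Your main argument is essentially the paper's own proof: verify the eigenvalue equation on the basis $Q_{m,\kb}^n=p(t)R(x,t)$ using the Jacobi ODE for $p$, the ball equation \eqref{eq:diffBall} for $P_\kb^m(\varpi_\mu;\cdot)$, and the homogeneity of $R$. The paper proceeds by first computing $t(1-t)u_{tt}+2(1-t)\la x,\nabla_x\ra u_t$, then feeding in the Jacobi equation and homogeneity to reduce a bracketed term to $-t^2\Delta_x H$, and finally adding the ball equation; you instead pre-digest the ball equation into the identity $t\Delta_x R = tR_{tt}-(2m+2\mu+d-2)R_t$ and do a single bookkeeping pass grouped by derivatives of $R$. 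These are the same proof with different packaging, and your observation that the cancellations hinge on $2\a+1=2\mu+d$ (i.e.\ $\b=0$) makes explicit what the paper records only as a remark afterward.

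Your alternative route is genuinely different from anything in the paper and is worth highlighting: showing that $\CD_{\mu,\g}$ has the divergence form $W_{\mu,\g}^{-1}\sum_{i,j}\partial_i(W_{\mu,\g}\,a_{ij}\,\partial_j)$ with the stated $(a_{ij})$ gives self-adjointness (hence $\CD_{\mu,\g}:\CV_n\to\CV_n$) without any basis, and the observation that the degree-preserving part of $\CD_{\mu,\g}$ is exactly $-E^2-(2\mu+\g+d)E$ with $E=t\partial_t+\la x,\nabla_x\ra$ then reads off the eigenvalue from the quotient $\Pi_n^{d+1}/\Pi_{n-1}^{d+1}$. This is more conceptual and basis-free, explains the eigenvalue structurally, and would transfer verbatim to the reflection-invariant setting of Theorem~\ref{thm:DEconeJ-refl}; the price is that verifying the divergence form (and that the boundary terms vanish on $\partial\VV^{d+1}$ for the full parameter range $\mu>-\tfrac12$, $\g>-1$) is itself a computation you have only asserted.
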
 
 
\begin{proof}
By Proposition \ref{prop:OPconeJ}, it suffices to establish the result for $u = Q_{m,\kb}^n$ in \eqref{eq:coneJ}. 
For simplicity, we write 
$$
u(x,t) = g(t) H(x,t), \quad  g(t) = P_{n-m}^{(2\mu+2m+d-1, \g)}(1- 2t),
     \quad H(x,t) = t^m P_\kb^m\left(\frac{x}{t}\right). 
$$ 
The differential equation for the Jacobi polynomial \eqref{eq:JacobiDE} shows that $g$ satisfies 
\begin{align} \label{eq:diff1}
  t(1-t) g''(t) + \big(2\mu + 2m+d - & \, (2\mu + \g+ 2m + d +1) t\big) g'(t) \\ 
        & = - (n-m)(n+m+2\mu+\g+d) g(t). \notag
\end{align}
Since $H(x,t)$ is a homogeneous polynomial of degree $m$ in $(x,t)$, it satisfies the identity
\begin{equation} \label{eq:diff2}
     \left(t \frac{\partial}{\partial t} + \la x, \nabla_x\ra\right) H(x,t) = m H(x,t). 
\end{equation}
Furthermore, let $h(x): = P_\kb^m(x)$; then $h\in \CV_m^d(\varpi_\mu)$ satisfies the differential 
equation \eqref{eq:diffBall}. Since $H(x,t) = t^m h(\frac{x}{t})$, it follows that 
\begin{align}\label{eq:diff3}
  \left( t^2 \Delta_x  - \la x, \nabla_x\ra^2  - (2 \mu +d-1) \la x,\nabla_x\ra \right) H 
      = -m (m+2\mu+d-1) H.
\end{align}

Now, taking derivatives, a straightforward computation gives
\begin{align*} 
  t(1-t) u_{tt} + 2(1-t) \la x,\nabla_x\ra u_t & = t(1-t) g''(t) H(x,t)   \\
  &   + 2 (1-t) g'(t) \left (t \frac{\partial}{\partial t} + \la x ,\nabla_x \ra\right)  H(x,t) \\
  &   +  (1-t) g(t) \left (2  \la x, \nabla_x\ra \frac{\partial}{\partial t}H(x,t) +t \frac{\partial^2}{\partial t^2}H(x,t) \right). 
\end{align*}
Using \eqref{eq:diff2} on the second term in the righthand side, we obtain from \eqref{eq:diff1}
\begin{align} \label{eq:diff4}
  t(1-t) u_{tt} + & 2(1-t) \la x,\nabla_x\ra u_t + \big(2 \mu+d - (2\mu+\g+d+1) t\big) u_t  \\
 &   =  - (n-m) (n+m+2\mu + \g+d) u - (\g+1) g(t)  t  \frac{\partial}{\partial t} H(x,t)   \notag \\
  &  +  (1-t) g(t) \left[ t \frac{\partial^2}{\partial t^2}H(x,t) + 
      (2  \la x, \nabla_x\ra +2 \mu + d)\frac{\partial}{\partial t}H(x,t)\right]. \notag
\end{align}
Since $ \frac{\partial}{\partial t} H(x,t)$ is a homogeneous polynomial of degree $m-1$ in $(x,t)$ variables, 
we deduce by \eqref{eq:diff2} that
$$
 t  \frac{\partial^2}{\partial t^2} H(x,t) = (m-1) \frac{\partial}{\partial t} H(x,t) - 
   \la x ,\nabla_x \ra \frac{\partial}{\partial t} H(x,t),
$$
so that $t$ times the square bracket in the righthand side of \eqref{eq:diff4} becomes, applying \eqref{eq:diff2} 
one more time, 
\begin{align*}
 t [\ldots ]  \, & = \big( 2\mu + m+d-1 + \la x \nabla_x \ra t  \big) t \frac{\partial}{\partial t} H(x,t) \\
       & =  \left( - \la x \nabla_x \ra^2 - (2\mu+d-1) \la x \nabla_x \ra + m (2\mu + m+d-1) \right)H(x,t) \\
       & =  - t^2 \Delta_x H(x,t),
 \end{align*}
where the last step follows from \eqref{eq:diff3}. With this identity and applying \eqref{eq:diff2} again in 
the second term in the righthand side of \eqref{eq:diff4}, we obtain 
\begin{align*}
 t(1-t) u_{tt} + & 2(1-t) \la x,\nabla_x\ra u_t + \big(2 \mu+d - (2\mu+\g+d+1) t\big) u_t + t (1-t) \Delta_x u\\   
   & = - (n-m) (n+m+2\mu + \g+d) u - m (\g+1) u + (\g+1) \la x, \nabla \ra u.
\end{align*}
Now, adding \eqref{eq:diff3} multiplied by $g(t)$ to the above identity and using 
$$
   (n-m) (n+m + 2\mu +\g + d) + m(m+2 \mu + \g+d) = n(n + 2\mu + \g +d),
$$
we conclude that 
\begin{align*}
  \big[ t(1-t)\partial_{tt}& +  2 (1-t) \la x,\nabla_x \ra \partial_t + t \Delta_x -  \la x,\nabla_x \ra^2
     + (2\mu+d)\partial_t  \\ 
  &  - (2\mu+\g+d+1) \big( \la x,\nabla_x\ra + t \partial_t\big) +\la x,\nabla_x\ra \big] u = 
        -n (n+2\mu+\g+d) u.
\end{align*}
Finally, using $\la x, \nabla_x \ra^2 = \la x, \nabla_x \ra + \sum_{1 \le i, j \le d} x_i x_j \partial_{x_i} \partial_{x_j}$, 
it is easy to verify that the lefthand side of the above identity is exactly $\CD_{\mu,\g}$. The proof is completed. 
\end{proof}

\begin{rem}
When $\b \ne 0$, the Jacobi polynomials $Q_{m,\kb}^n$ on the cone with respect to $W_{\mu,\b,\g}$ 
also satisfy a differential equation, but the eigenvalues depend on both $m$ and $n$. In other words,
$\CV_n(\VV^{d+1},W_{\mu,\b,\g})$ is not an eigenspace of such a differential operator. 
\end{rem}

\subsection{Lagurre polynomials on the cone}
 
We consider the weight functions 
$$
   W_{\mu,\b}^L(x,t): = (t^2-\|x\|^2)^{\mu-\f12} t^\b e^{-t}, \quad   \mu > -\tfrac12, \quad \b > -1.
$$
defined on the infinite solid cone 
$$
  \VV^{d+1} =\{(x,t): \|x\| \le t, \quad t \in (0, \infty), \, x \in \RR^d\}
$$ 
and define the inner product 
$$
   \la f,g \ra_{\mu,\b} : = b_{\mu,\b} \int_{\VV^{d+1}} f(x,t) g(x,t) W_{\mu,\b}^L(x,t) \d x\, \d t,
$$
where $b_{\mu,\b}$ is the constant chosen so that $\la 1, 1 \ra_{\mu,\b} =1$. Similar to \eqref{eq:int-V}, 
we have
\begin{align*}
  \int_{\VV^{d+1}} f(x,t) W_{\mu,\b}^L (x,t) \d x \, \d t \, = \int_0^\infty \int_{\BB^d} f(t u, t) t^{\b+2\mu+d-1} e^{-t} 
    \d t  (1-\|u\|^2)^{\mu -\f12} \d u, 
\end{align*}
which implies, in particular, that
$$
b_{\mu,\b} = c_{2\mu+\b+d-1}^L b_\mu^B \quad\hbox{with}\quad c_{\a}^L =   \frac{1}{\Gamma(\a+1)}.
$$
Let $\CV_n(\VV^{d+1}, W_{\mu,\b}^L)$ be the space of orthogonal polynomials of degree exactly $n$ with respect to 
the inner product $\la \cdot,\cdot\ra_{\mu,\b}$. Similar to the Jacobi case, a basis for the Laguerre polynomials on
the cone is given in the following: 

\begin{prop}\label{thm:OPconeL} 
For $m =0,1,2,\ldots$, let $\{P_\kb^m(\varpi_\mu): |\kb| = m, \kb \in \NN_0^d\}$ denote an orthonormal basis 
of $\CV_m^d(\varpi_\mu)$ on the unit ball $\BB^d$. Let $\a = \mu+ \f{\b+d-1}{2}$. Define
$$
  L_{m,\kb}^n(x,t) := L_{n-m}^{(2\a+2m)}(t) t^m P_\kb^m\left(\varpi_\mu; \frac{x}{t}\right).
$$
Then $\{L_{m,\kb}^n(x,t): |\kb| = m, \, 0 \le m\le n\}$ is an orthogonal basis of $\CV_n(\VV^{d+1}, W_{\mu,\b}^L)$ 
and the norm of $L_{m, \kb}^n$ is given by 
$$
    H_{m,n}:=  \la L_{m, \kb}^n, L_{m, \kb}^{n} \ra_{\mu,\b} = 
     \frac{ c_{2\a}^L} { c_{2\a+2m}^L} h_{n-m}^{(2\a+2m)},
$$
where $h_m^{(\a)}$ denotes the norm of the Laguerre polynomial of degree $m$. 
\end{prop}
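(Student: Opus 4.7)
The plan is to imitate the proof of Proposition \ref{prop:OPconeJ} almost verbatim, with the Jacobi factor in $t$ replaced by a Laguerre polynomial. First I would check that $L_{m,\kb}^n$ has total degree $n$ in $(x,t)$: since $P_\kb^m(\varpi_\mu;\cdot)$ is a polynomial of degree $m$, the expression $t^m P_\kb^m(\varpi_\mu; x/t)$ is a polynomial of degree $m$ in $(x,t)$ (clearing denominators), and $L_{n-m}^{(2\a+2m)}(t)$ is a polynomial of degree $n-m$ in $t$ alone.

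Next, I would compute the inner product $\la L_{m,\kb}^n, L_{m',\kb'}^{n'}\ra_{\mu,\b}$ by using the substitution $x = tu$, $u \in \BB^d$, which yields the separated formula displayed in the excerpt:
\begin{align*}
 \int_{\VV^{d+1}} fg\, W_{\mu,\b}^L\, \d x\,\d t = \int_0^\infty\!\!\int_{\BB^d} f(tu,t) g(tu,t)\, t^{2\mu+\b+d-1} e^{-t} (1-\|u\|^2)^{\mu-\f12}\, \d u\, \d t.
\end{align*}
On the candidate basis this factorizes as a product of a Laguerre integral in $t$ (with exponent $2\a+m+m'$) times a ball integral in $u$. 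Orthonormality of $\{P_\kb^m(\varpi_\mu)\}$ on $\BB^d$ under $b_\mu^B\varpi_\mu$ immediately gives $\delta_{m,m'}\delta_{\kb,\kb'}$ whenever $m=m'$; when $m\ne m'$ the $u$-integral still vanishes because $P_\kb^m$ and $P_{\kb'}^{m'}$ lie in $\CV_m^d(\varpi_\mu)$ and $\CV_{m'}^d(\varpi_\mu)$ respectively, which are mutually orthogonal. For $m=m'$ and $\kb=\kb'$, the remaining $t$-integral becomes $\int_0^\infty L_{n-m}^{(2\a+2m)}(t) L_{n'-m}^{(2\a+2m)}(t) t^{2\a+2m} e^{-t}\,\d t$, and \eqref{eq:LaguerreNorm} collapses this to $\delta_{n,n'}$ with constant $\frac{c_{2\a}^L}{c_{2\a+2m}^L} h_{n-m}^{(2\a+2m)}$, after accounting for the fact that \eqref{eq:LaguerreNorm} is normalized by $c_{2\a+2m}^L = 1/\Gamma(2\a+2m+1)$ while the cone inner product carries the factor $c_{2\a}^L = 1/\Gamma(2\a+1)$.

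Finally I would verify the cardinality count: the number of basis elements is $\sum_{m=0}^n \dim\CV_m^d(\varpi_\mu) = \sum_{m=0}^n\binom{m+d-1}{m} = \binom{n+d}{d}$, which equals $\dim\CV_n(\VV^{d+1}, W_{\mu,\b}^L)$ by the general dimension formula from Section 2.1, so the orthogonal system is in fact a basis.

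I do not expect any real obstacle here, since the argument is structurally identical to that of Proposition \ref{prop:OPconeJ}; the only point requiring some care is bookkeeping of the normalization constants $c_{2\a}^L$, $c_{2\a+2m}^L$ and $b_\mu^B$ so that the norm formula for $H_{m,n}$ comes out exactly as stated, and confirming that the infinite $t$-range (and the noncompactness of $\VV^{d+1}$ in this setting) is absorbed harmlessly by the exponential decay of $e^{-t}$.
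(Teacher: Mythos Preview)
Your proposal is correct and follows exactly the approach the paper intends: the paper does not write out a separate proof for this proposition but introduces it with ``Similar to the Jacobi case, a basis for the Laguerre polynomials on the cone is given in the following,'' relying on the reader to transplant the argument of Proposition~\ref{prop:OPconeJ} verbatim with the Jacobi $t$-factor replaced by a Laguerre polynomial. Your bookkeeping of the normalization constants $b_{\mu,\b}=c_{2\a}^L b_\mu^B$ and the Laguerre norm \eqref{eq:LaguerreNorm} yielding $H_{m,n}=\tfrac{c_{2\a}^L}{c_{2\a+2m}^L}h_{n-m}^{(2\a+2m)}$ is exactly right, and the cardinality count is identical to the Jacobi case.
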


When $\b = 0$, as in the Jacobi case, the Laguerre polynomials on the cone are also eigenfunctions of 
a second order linear PDE with eigenvalues depending only on the degree of the polynomials.
 
\begin{thm} \label{thm:DEconeL} 
Let $\mu > -\f12$ and $n \in \NN_0$. Then every $u \in \CV_n(\VV^{d+1}, W_{\mu,0}^L)$ satisfies the 
differential equation
\begin{equation}\label{eq:cone-eigenL}
   \CD_{\mu} u =  -n  u,
\end{equation}
where $\CD_{\mu} = \CD_{\mu}(x,t)$ is the second order linear differential operator
\begin{align*}
  \CD_{\mu} : = & \, t \left( \Delta_x+ \partial_t^2 \right) + 2  \la x,\nabla_x \ra \partial_t   - \la x,\nabla_x\ra 
        + (2\mu + d -t) \partial_t.
  \end{align*}
\end{thm}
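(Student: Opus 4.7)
The plan parallels the proof of Theorem~\ref{thm:DEconeJ}. By Proposition~\ref{thm:OPconeL} it suffices to verify \eqref{eq:cone-eigenL} on the basis elements, so I would write $u(x,t) = g(t) H(x,t)$ where $g(t) = L_{n-m}^{(2\mu + 2m + d - 1)}(t)$ and $H(x,t) = t^m P_\kb^m(\varpi_\mu; x/t)$ is homogeneous of degree $m$ in $(x,t)$. Three ingredients drive the computation: the Laguerre differential equation $t g'' + (2\mu + 2m + d - t) g' + (n - m) g = 0$ for $g$; Euler's identity $(t\partial_t + \la x, \nabla_x \ra) H = m H$ from homogeneity of $H$; and the ball differential equation \eqref{eq:diffBall} applied to $P_\kb^m$, which transfers via $H = t^m P_\kb^m(x/t)$ to $(t^2 \Delta_x - \la x, \nabla_x\ra^2 - (2\mu + d - 1) \la x, \nabla_x\ra) H = -m(m + 2\mu + d - 1) H$.

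Next I would expand $\CD_\mu u$ by the product rule and group the result by derivatives of $g$. The $g''$ coefficient is $tH$, and the $g'$ coefficient reduces, via one application of Euler to collapse $2tH_t + 2\la x, \nabla_x\ra H = 2mH$, to $(2\mu + 2m + d - t) H$. These two contributions combine as $H \bigl[t g'' + (2\mu + 2m + d - t) g'\bigr] = -(n - m) g H$ by the Laguerre equation. What remains is the $g$ coefficient,
\begin{equation*}
 t \Delta_x H + t H_{tt} + 2 \la x, \nabla_x\ra H_t + (2\mu + d) H_t - \la x, \nabla_x\ra H - t H_t.
\end{equation*}
The key step, mirroring the manipulation after \eqref{eq:diff4} in the Jacobi proof, is to show that the first four terms vanish identically: differentiating Euler's identity once in $t$ gives $t H_{tt} + \la x, \nabla_x\ra H_t = (m-1) H_t$, and substituting this together with $t H_t = m H - \la x, \nabla_x\ra H$ into $t$ times the first four terms lets one match the result against the transferred ball identity, producing the cancellation
\begin{equation*}
 t \Delta_x H + t H_{tt} + 2 \la x, \nabla_x\ra H_t + (2\mu + d) H_t = 0.
\end{equation*}
The remaining $-\la x, \nabla_x\ra H - t H_t$ equals $-m H$ by Euler. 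Summing, $\CD_\mu u = -(n-m) g H - m g H = -n u$, as claimed.

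The main obstacle is the bookkeeping for the cancellation identity: one has to rewrite $t^2 H_{tt}$ and $t \la x, \nabla_x\ra H_t$ in terms of $\la x, \nabla_x\ra H$ and $\la x, \nabla_x\ra^2 H$, and then recognize the ball operator from \eqref{eq:diffBall}. The argument is otherwise somewhat cleaner than its Jacobi counterpart, since the absence of $(1-t)$ factors removes the $(\g+1)$ cross terms, and the eigenvalue is linear in $n$ precisely because the Laguerre eigenvalue is itself linear in its degree.
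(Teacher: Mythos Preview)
Your proposal is correct and follows essentially the same approach as the paper: both write $u=gH$, invoke the Laguerre equation for $g$, Euler's identity for the homogeneous $H$, and the transferred ball equation \eqref{eq:diffBall}, and both hinge on the same cancellation $t\Delta_x H + tH_{tt} + 2\la x,\nabla_x\ra H_t + (2\mu+d)H_t = 0$. The only difference is cosmetic organization---you group the product-rule expansion by $g'', g', g$ coefficients from the outset, whereas the paper first derives an analog of \eqref{eq:diff4} and then rearranges.
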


\begin{proof}
The proof follows the same steps as in the Jacobi case, but simpler. We again write $u(x,t) = g(t)H(x,t)$, 
with $g(t) = L_{n-m}^{(2\mu+d-1+2m)}(t)$ and $H$ being the same as before. Instead of \eqref{eq:diff1}, we 
now use the differential equation for the Laguerre polynomials \cite[(5.1.2)]{Sz}, which shows that $g$ satisfies 
\begin{equation} \label{eq:diffL}
  t g''(t) +  (2\mu + 2m+d - t) g'(t)  = - (n-m) g(t). 
\end{equation}
As in the proof of Theorem \ref{thm:DEconeJ}, a straightforward computation shows that the above differential
equation for $g$ leads to an analog of \eqref{eq:diff4}, 
\begin{align} \label{eq:diffLag}
   t u_{tt} + 2 \la x,\nabla_x \ra u_t   & \, + (2\mu+d-t) u_t
    =  - (n-m) u -  g(t)  t  \frac{\partial}{\partial t} H(x,t)  \\
    & \, +   g(t) \left[ t \frac{\partial^2}{\partial t^2}H(x,t) + 
      (2  \la x, \nabla_x\ra +2 \mu + d)\frac{\partial}{\partial t}H(x,t)\right]. \notag
\end{align}
The term in the square bracket, being exactly the same as in the proof of Theorem \ref{thm:DEconeJ}, is equal 
to $- t \Delta_x H(x,t)$. Hence, using \eqref{eq:diff2} for $ t  \frac{\partial}{\partial t} H(x,t)$, we see that the
righthand side of \eqref{eq:diffLag} becomes 
$$
 -(n-m) u -  (m - \la x, \nabla_x \ra)u - t \Delta_x u = -n u + \la x ,\nabla_x \ra u - t \Delta_x u.
$$
Moving the last two terms to the lefthand side of \eqref{eq:diffLag}, we have proved \eqref{eq:cone-eigenL}.
\end{proof}

\begin{rem}
For $\b \ne 0$, the Laguerre polynomials on the cone also satisfy a differential equation, but the eigenvalues 
depend on both $m$ and $n$. 
In other words, $\CV_n(\VV^{d+1},W_{\mu,\b}^L)$ is not an eigenspace of such a differential operator. 
\end{rem}

\section{Reproducing kernel for the Jacobi polynomials on the cone}
\setcounter{equation}{0}

As the kernels of the orthogonal projection operators \eqref{eq:projOP}, reproducing kernels play an essential 
role for studying Fourier orthogonal expansions. In this section we describe a closed formula for the reproducing 
kernels of the Jacobi polynomials on the cone. 

In terms of the orthogonal basis in Proposition \ref{prop:OPconeJ}, the reproducing kernel of 
$\CV_n(\VV^{d+1},W_{\mu,\b,\g})$ satisfies 
\begin{equation} \label{eq:PnConeDef}
  \Pb_n(W_{\mu,\b,\g}; (x,t), (y,s)) =  \sum_{m=0}^n \sum_{|\kb| =m} \frac{Q_{m,\kb}^n(x,t) Q_{m,\kb}^n(y,s)}
    {H_{m,n}^{\a,\g}}. 
\end{equation}
To simplify our presentation, we consider $\b = 0$ and $\b \ne 0$ separately. 

\subsection{Jacobi polynomials on the cone with $\b =0$}
We consider the case $\b =0$, or the weight function $W_{\mu,\g}$ in \eqref{eq:W-mgCone} first. 
We begin with the reproducing kernel on the triangle $\VV^2$. 

For $d =1$, the cone $\VV^2$ becomes a triangle $\VV^2 = \{(x, t )\in \RR^2: |x| \le t \le 1\}$. It is related to 
the standard triangle $\TT^2$ by a change of variable $(x_1,x_2) \in \TT^2 \mapsto (x,t) \in \VV^2$,
\begin{equation*} 
    (x_1, x_2)  = \left(\frac{t+x}{2}, \frac{t-x}{2}\right).
\end{equation*}
Under this change of variables, the classical weight function $\varpi_{\a,\b,\g}$ on the triangle $\TT^2$ becomes 
$2^{-\a-\b} \wh \varpi_{\a,\b,\g}$, where 
$$
    \wh \varpi_{\a,\b,\g}(x,t): = (x+t)^\a (t-x)^\b (1-t)^\g,  \qquad (x,t) \in \VV^2.
$$
In particular, we see that $\wh \varpi_{\mu-\f12,\mu-\f12,\g} = W_{\mu,\g}$ when $d =1$. For $d \ge 2$, we 
need $\wh \varpi_{\a-\f12,\a - \f12,\g}$ for $\a = \mu + \f{d-1}{2}$. With a slight abuse of notation, 
we denote the reproducing kernel with respect to the weight function 
$$
\wh \varpi_{\a,\g}(x,t): = \wh \varpi_{\a-\f12,\a - \f12,\g}(x,t) = (t^2- x^2)^{\a-\f12}(1-t)^\g
$$
on the triangle $\VV^2$ by
\begin{align} \label{eq:PV-PT} 
\Pb_n \left(\wh \varpi_{\a,\g}; (x,t), (y,s) \right): = \Pb_n \left(\varpi_{\a-\f12,\a-\f12,\g}; (\tfrac{t+x}{2}, \tfrac{t-x}{2}),
  (\tfrac{s+y}{2}, \tfrac{s-y}{2}) \right), 
\end{align} 
where the righthand side is the reproducing kernel for $\CV_n(\TT^2,\varpi_{\a-\f12,\a-\f12,\g})$ on the triangle.
Recall that $Z_m^\a$ is defined in \eqref{eq:Zn}. 

\begin{lem} 
Let $\a = \mu + \f{d-1}{2}$. For $(u,t) \in \VV^2$ and $0 \le s \le 1$, 
\begin{align} \label{eq:trianglePV2} 
 \Pb_n\left(\wh \varpi_{\a,\g}; (u ,t), (s,s)\right)  = \sum_{m=0}^n \frac{h_m^{(\a- \f12, \a - \f12)}}
      {h_{m,n}^{(\a- \f12, \a - \f12,\g)}} & P_{n-m}^{(2m+2 \a,\g)}(1-2 t) \\
           \times & P_{n-m}^{(2m+2\a,\g)}(1-2s) t^m s^m Z_m^{\a}\left(\frac{u}{t}\right), \notag
\end{align}
where $h_{m}^{(\a,\b)}$ is the norm of the Jacobi polynomial in \eqref{eq:JacobiNorm} and 
$h_{m,n}^{\a,\b,\g}$ is the norm of the Jacobi polynomial on the triangle in \eqref{eq:triangleOPnorm}.
\end{lem}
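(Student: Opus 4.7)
The plan is to transport the closed-form triangle kernel \eqref{eq:trianglePn} to $\VV^2$ via the relation \eqref{eq:PV-PT}, then specialize the second argument to the image of $(s,s)$ under the change of variables.

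First I would apply \eqref{eq:PV-PT} to write
$$
\Pb_n\bigl(\wh\varpi_{\a,\g};(u,t),(s,s)\bigr) = \Pb_n\bigl(\varpi_{\a-\f12,\a-\f12,\g};(\tfrac{t+u}{2},\tfrac{t-u}{2}),(s,0)\bigr),
$$
and insert the explicit series \eqref{eq:trianglePn} with parameters $(\a-\f12,\a-\f12,\g)$. The coordinate substitution yields $x_1+x_2=t$, $(x_1-x_2)/(x_1+x_2)=u/t$, $1-2x_1-2x_2=1-2t$ on the first slot and, thanks to $y_2 = 0$, $y_1+y_2=s$, $(y_1-y_2)/(y_1+y_2)=1$, $1-2y_1-2y_2=1-2s$ on the second slot. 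Each summand then already matches the claimed formula \eqref{eq:trianglePV2}, apart from the factor $P_k^{(\a-\f12,\a-\f12)}(1)\,P_k^{(\a-\f12,\a-\f12)}(u/t)$ appearing in place of $h_k^{(\a-\f12,\a-\f12)}\,Z_k^\a(u/t)$.

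It therefore remains to establish the one-variable identity
$$
P_k^{(\a-\f12,\a-\f12)}(1)\, P_k^{(\a-\f12,\a-\f12)}(v) = h_k^{(\a-\f12,\a-\f12)}\, Z_k^\a(v), \qquad -1 \le v \le 1.
$$
Both sides are (after dividing by $h_k^{(\a-\f12,\a-\f12)}$) the rank-one reproducing kernel of the one-dimensional subspace $\CV_k^1$ with respect to the weight $w_{\a-\f12,\a-\f12}(x)=(1-x^2)^{\a-\f12}=w_\a(x)$. Since the Jacobi normalization $c'_{\a-\f12,\a-\f12}$ and the Gegenbauer normalization $c_\a$ coincide by the Legendre duplication formula, the same reproducing kernel must result whether one uses the Jacobi or the Gegenbauer basis; setting one argument to $1$ and invoking the definition \eqref{eq:Zn} of $Z_k^\a$ delivers the identity.

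The only technical verification is the constants equality $c_\a = c'_{\a-\f12,\a-\f12}$; equivalently one may bypass the reproducing-kernel argument and check the identity directly from \eqref{eq:Jacobi-Gegen}, the value $P_k^{(\a-\f12,\a-\f12)}(1) = (\a+\f12)_k/k!$, and the explicit Pochhammer forms of $h_k^\a$ and $h_k^{(\a-\f12,\a-\f12)}$. Either route is short and routine, and there is no substantive obstacle once the triangle formula \eqref{eq:trianglePn} is brought to bear; the substance of the lemma is really the observation that evaluating the second argument on the boundary edge of $\VV^2$ collapses the symmetric Jacobi factor in \eqref{eq:triangleOP} to its value at $1$, which is what produces the Gegenbauer-kernel $Z_m^\a$ in the final series.
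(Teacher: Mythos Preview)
Your proof is correct and follows essentially the same route as the paper: apply \eqref{eq:PV-PT} and the explicit triangle series \eqref{eq:trianglePn}, specialize the second argument so that one symmetric Jacobi factor becomes $P_k^{(\a-\f12,\a-\f12)}(1)$, and then invoke the identity $P_k^{(\a-\f12,\a-\f12)}(1)P_k^{(\a-\f12,\a-\f12)}(v)/h_k^{(\a-\f12,\a-\f12)} = Z_k^\a(v)$. The paper simply quotes this last identity from \cite[(4.7.1)]{Sz}, whereas you supply a brief reproducing-kernel justification for it; otherwise the arguments are identical.
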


\begin{proof}
By \eqref{eq:PV-PT} and \eqref{eq:trianglePn}, we see that the kernel can be written as
\begin{align} \label{eq:trianglePV} 
 \Pb_n\left(\wh \varpi_{\a,\g}; (u ,t), (v,s)\right) = & \sum_{k=0}^n 
    \frac{P_{n-k}^{(2k+2\a,\g)}(1-2 t)P_{n-k}^{(2k+2\a,\g)}(1-2s)}{h_{k,n}^{(\a-\f12,\a-\f12,\g)}} \\
       & \times t^k s^k P_k^{(\a-\f12,\a-\f12)} \left(\frac{u}{t}\right)
           P_k^{(\a-\f12,\a-\f12)} \left(\frac{v}{s}\right)  \notag
\end{align}
for $(u,t) \in \VV^2$ and $(v,s) \in \VV^2$, where $h_{k,n}^{(\mu-\f12,\mu-\f12,\g)}$ is given in 
\eqref{eq:triangleOPnorm}. 
The Gegenbauer polynomials are special cases of the Jacobi polynomials. In particular \cite[(4.7.1)]{Sz},
$$
  \frac{P_m^{(\a- \f12, \a - \f12)}(1)P_m^{(\a- \f12, \a - \f12)}(u)}{h_m^{(\a- \f12, \a - \f12)}} 
   = \frac{C_m^\a(1) C_m^\a(u)} {h_m^\a} = \frac{m+\a}{\a} C_m^{\a}(u) = Z_m^\a(u).
$$
Hence, setting $v = s$ in \eqref{eq:trianglePV}, we obtain \eqref{eq:trianglePV2}.  
\end{proof}

For $d \ge 2$, the reproducing kernel on the cone $\VV^{d+1}$ can be written as an integral of the 
reproducing kernel on $\VV^2$. 
 
\begin{thm} \label{thm:PnCone}
Let $d \ge 2$. For $\mu \ge 0$ and $\g > -1$, $(x,t) \in \VV^{d+1}$, $(y,s) \in \VV^{d+1}$, 
\begin{align}\label{eq:PbCone}
 & \Pb_n \big(W_{\mu,0,\g};  (x,t), (y,s)\big) \\
  &\qquad  = c_{\mu-\f12} \int_{-1}^1 \Pb_n  \big(\wh \varpi_{\a, \g}; (\zeta(x,t, y,s;u),t), (s,s) \big)
     (1-u^2)^{\mu-1} \d u, \notag
\end{align} 
where $\a = \mu + \frac{d-1}{2}$ and 
\begin{equation} \label{eq:zetaCone}
\zeta(x,t, y,s; u): = \frac{\la x,y \ra}{s} + \frac{\sqrt{t^2-\|x\|^2} \sqrt{s^2-\|y\|^2}}{s} u.
\end{equation}
In the case $\mu =  0$, the identity \eqref{eq:PbCone} holds under the limit \eqref{eq:limit-int}. 
\end{thm}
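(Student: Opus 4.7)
The plan is to verify the closed-form formula by expanding both sides in the orthogonal basis from Proposition~\ref{prop:OPconeJ} and matching term-by-term. First I would use the basis $\{Q_{m,\kb}^n\}$ in \eqref{eq:coneJ} together with \eqref{eq:PnConeDef} to factor the cone kernel. Since the factors depending on $t,s$ separate from those depending on $\kb$, I obtain
\begin{equation*}
  \Pb_n(W_{\mu,0,\g}; (x,t),(y,s)) = \sum_{m=0}^n
    \frac{P_{n-m}^{(2\a+2m,\g)}(1-2t)\,P_{n-m}^{(2\a+2m,\g)}(1-2s)}{H_{m,n}^{(\a,\g)}}
     t^m s^m \,\Pb_m\!\left(\varpi_\mu; \tfrac{x}{t}, \tfrac{y}{s}\right),
\end{equation*}
where the inner sum $\sum_{|\kb|=m} P_\kb^m(\varpi_\mu; \tfrac{x}{t}) P_\kb^m(\varpi_\mu; \tfrac{y}{s})$ is recognized as the reproducing kernel on $\BB^d$.

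Next I would invoke the closed formula \eqref{eq:PnBall} for $\Pb_m(\varpi_\mu;\cdot,\cdot)$ with $X=x/t$ and $Y=y/s$. A short computation shows
\begin{equation*}
  \la X,Y\ra + u\sqrt{1-\|X\|^2}\sqrt{1-\|Y\|^2} = \frac{1}{t}\,\zeta(x,t,y,s;u),
\end{equation*}
so that $\Pb_m(\varpi_\mu;x/t,y/s) = c_{\mu-\f12}\int_{-1}^1 Z_m^{\a}(\zeta/t)(1-u^2)^{\mu-1}\d u$ with $\a = \mu+\tfrac{d-1}{2}$. Substituting this into the displayed expansion above and pulling the finite sum inside the integral, the proof reduces to recognizing the resulting series as the triangle kernel evaluated at $(\zeta(x,t,y,s;u),t)$ and $(s,s)$.

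This is exactly what the preceding Lemma delivers: formula \eqref{eq:trianglePV2}, with its argument $u$ replaced by $\zeta(x,t,y,s;u)$, gives precisely $\Pb_n(\wh\varpi_{\a,\g};(\zeta,t),(s,s))$ up to a ratio of normalization constants $h_m^{(\a-\f12,\a-\f12)}/h_{m,n}^{(\a-\f12,\a-\f12,\g)}$. The final bookkeeping step is to match this ratio with $1/H_{m,n}^{(\a,\g)}$. Using \eqref{eq:triangleOPnorm} with parameters $(\a-\tfrac12,\a-\tfrac12,\g)$, I get
$$h_{m,n}^{(\a-\f12,\a-\f12,\g)} = \frac{c_{2\a,\g}}{c_{2\a+2m,\g}}\,h_{n-m}^{(2\a+2m,\g)}\,h_m^{(\a-\f12,\a-\f12)} = H_{m,n}^{(\a,\g)}\,h_m^{(\a-\f12,\a-\f12)},$$
which gives the desired cancellation.

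The case $\mu=0$ follows by applying the limiting identity \eqref{eq:limit-int} to the integral; since the series is a finite sum and the integrand is continuous in $u$, the limit passes inside. The main obstacle, and really the only nontrivial step, is keeping track of the homogeneity: verifying that the argument of $Z_m^\a$ appearing in the ball formula \eqref{eq:PnBall} (after substituting $X=x/t$, $Y=y/s$) matches the argument $\zeta/t$ from the lemma, so that the two integrals align. Everything else is bookkeeping with the explicit normalizations in \eqref{eq:c_ab}, \eqref{eq:JacobiNorm}, \eqref{eq:coneJnorm} and \eqref{eq:triangleOPnorm}.
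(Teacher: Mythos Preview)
Your proposal is correct and follows essentially the same approach as the paper's own proof: expand the cone kernel via the basis \eqref{eq:coneJ}, recognize the inner sum as the ball reproducing kernel and apply \eqref{eq:PnBall}, then match the resulting series against the triangle-kernel formula \eqref{eq:trianglePV2} using the normalization identity $h_m^{(\a-\f12,\a-\f12)}/h_{m,n}^{(\a-\f12,\a-\f12,\g)} = 1/H_{m,n}^{(\a,\g)}$, which is precisely the paper's \eqref{eq:norm=norm}. Your bookkeeping and the handling of the $\mu=0$ limit are also in line with the paper.
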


\begin{proof}
In terms of the orthogonal basis $Q_{m,\kb}^n$ in \eqref{eq:coneJ} of $\CV_n(\VV^{d+1})$, the reproducing 
kernel is given by 
\begin{align*}
  \Pb_n(W_{\mu,0,\g}; (x,t), (y,s))
   =  \sum_{m=0}^n & \frac{P_{n-m}^{(2\a+2m,\g)}(1-2t)P_{n-m}^{(2\a + 2m,\g)}(1-2s)} {H_{m,n}^{\mu,\g}}  \\
       & \times  s^m t^m P_m\left(\varpi_\mu; \frac{x}{t}, \frac{y}{s}\right).  
\end{align*}
Using the closed formula of the reproducing kernel  $P_m(\varpi_\mu; \cdot,\cdot)$ in \eqref{eq:PnBall} with
$$
   \left \langle \frac{x}{t}, \frac{y}{s} \right\rangle + \sqrt{ 1- \frac{\|x\|^2}{t^2} }\sqrt{ 1- \frac{\|y\|^2}{s^2} } u 
    =   \frac{\zeta(x,t, y,s; u)}{t}, 
$$
we see that
\begin{align}\label{eq:PbCone1.5}
  \Pb_n(W_{\mu,0,\g}; (x,t), (y,s))
   =    c_{\mu-\f12} & \int_{-1}^1 
     \sum_{m=0}^n  \frac{P_{n-m}^{(2\a+2m,\g)}(1-2t)P_{n-m}^{(2\a + 2m,\g)}(1-2s)} {H_{m,n}^{\mu,\g}}  t^m s^m  \notag \\
      & \times  Z_m^\a \left( \frac{\zeta(x,t,y,s; u)}{t}\right) (1-u^2)^{\mu-1}\d u.
\end{align}
The sum in the righthand side is comparable to the sum in the righthand side of \eqref{eq:trianglePV2}
since, using the explicit formulas for the constants in \eqref{eq:JacobiNorm}, \eqref{eq:triangleOPnorm} and \eqref{eq:coneJnorm}, we see that 
\begin{equation} \label{eq:norm=norm}
\frac{h_m^{(\a- \f12, \a - \f12)}}{h_{m,n}^{(\a- \f12, \a - \f12,\g)}}  = \frac{1}{H_{m,n}^{(\mu,\g)}} 
\qquad \hbox{when} \quad \a = \mu + \frac{d-1}{2}.
\end{equation}
Putting the two identities together, we have proved \eqref{eq:PbCone}.
\end{proof} 
  
The reproducing kernel on the triangle satisfies a closed formula, given in \eqref{eq:trianglePn2}, when 
the parameters are nonnegative. This allows us to derive a closed formula for 
the reproducing kernel on the cone from \eqref{eq:PbCone}. 
 
\begin{thm} \label{thm:PnCone2}
Let $d \ge 2$. For $\mu \ge 0$ and $\g \ge -\f12$, let $\a = \mu + \frac{d-1}{2}$; then 
\begin{align}\label{eq:PbCone2}
  \Pb_n \big(W_{\mu,0,\g}; (x,t), (y,s)\big) =\, & 
   c_{\mu-\f12}  c_{\a -\f{1}{2}} c_\g \int_{[-1,1]^3}  Z_{2n}^{2 \a+\g+1} (\xi (x, t, y, s; u, v)) \\
     &\times   (1-u^2)^{\mu-1} (1-v_1^2)^{\a - 1}(1-v_2^2)^{\g-\f12}  \d u \d v, \notag
\end{align}
where $\xi (x,t, y,s; u, v) \in [-1,1]$ is defined by 
\begin{align} \label{eq:xi}
\xi (x,t, y,s; u, v) = &\, v_1 \sqrt{\tfrac12 \left( s t + \la x,y \ra + \sqrt{t^2-\|x\|^2} \sqrt{s^2-\|y\|^2} \, u \right)}\\
      & + v_2 \sqrt{1-t}\sqrt{1-s}. \notag
\end{align}
When $\mu = 0$ or $\g = -\f12$, the identity \eqref{eq:PbCone2} holds under the limit \eqref{eq:limit-int}.
\end{thm}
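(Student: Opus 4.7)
The plan is to combine Theorem \ref{thm:PnCone} (which reduces the cone kernel to an integral of the triangle kernel) with the closed formula \eqref{eq:trianglePn2} for the triangle kernel. Concretely, the starting point is the identity \eqref{eq:PbCone}, in which the integrand is $\Pb_n(\wh\varpi_{\a,\g}; (\zeta,t),(s,s))$ with $\zeta = \zeta(x,t,y,s;u)$ and the outer integration is over $u \in [-1,1]$ with weight $c_{\mu-\f12}(1-u^2)^{\mu-1}$. The remaining task is to express the triangle kernel evaluated at the specific point $(s,s)$ in closed form and then combine the two integrals.

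To do this, I would use \eqref{eq:PV-PT} to translate to the standard triangle $\TT^2$: the second slot $(s,s) \in \VV^2$ corresponds to $(s,0) \in \TT^2$ under $(x_1,x_2) = (\tfrac{t+x}{2},\tfrac{t-x}{2})$, and the first slot $(\zeta,t) \in \VV^2$ corresponds to $(\tfrac{t+\zeta}{2},\tfrac{t-\zeta}{2}) \in \TT^2$. I can then apply the triple-integral formula \eqref{eq:trianglePn2} for $\Pb_n(\varpi_{\a-\f12,\a-\f12,\g};\cdot,\cdot)$. Because the second argument has its $x_2$-coordinate equal to $0$, the factor $\sqrt{x_2 y_2}\,t_2$ in \eqref{eq:zetaT} vanishes identically, so $t_2$ no longer appears inside $Z_{2n}^{2\a+\g+1}$. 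The $t_2$-integral then decouples and is evaluated by the normalization
\[
  c_{\a-\f12}\int_{-1}^1 (1-t_2^2)^{\a-1}\d t_2 =1,
\]
reducing the triple integral on $\TT^2$ to a double integral whose remaining variables I rename $(t_1,t_3)\mapsto (v_1,v_2)$ to align with \eqref{eq:PbCone2}.

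After this reduction, the argument of $Z_{2n}^{2\a+\g+1}$ is
\[
  v_1\sqrt{\tfrac{s(t+\zeta)}{2}} + v_2\sqrt{(1-t)(1-s)},
\]
and the key algebraic step is the simplification
\[
  \tfrac{s(t+\zeta)}{2} = \tfrac12\bigl(st + \la x,y\ra + \sqrt{t^2-\|x\|^2}\sqrt{s^2-\|y\|^2}\,u\bigr),
\]
which follows directly from the definition \eqref{eq:zetaCone} of $\zeta$ and matches \eqref{eq:xi} exactly. Substituting the resulting closed form for $\Pb_n(\wh\varpi_{\a,\g}; (\zeta,t),(s,s))$ back into \eqref{eq:PbCone} and Fubini's theorem combines the outer $u$-integral with the $(v_1,v_2)$-integral into the triple integral over $[-1,1]^3$ with the asserted weight $c_{\mu-\f12}c_{\a-\f12}c_\g (1-u^2)^{\mu-1}(1-v_1^2)^{\a-1}(1-v_2^2)^{\g-\f12}$.

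The main issues to watch are bookkeeping of the normalization constants and the limiting cases. For the constants, the product $c_{\a-\f12}c_{\a-\f12}c_\g$ coming from \eqref{eq:trianglePn2} collapses to $c_{\a-\f12}c_\g$ after integrating out $t_2$; this must be checked against \eqref{eq:PbCone2} and is the one place a factor could easily be misplaced. For the boundary cases $\mu=0$ and $\g=-\f12$, I would use the limit \eqref{eq:limit-int} applied to the $u$- and $v_2$-integrals respectively, exactly as already invoked for \eqref{eq:PbCone}; the validity of \eqref{eq:trianglePn2} down to parameter value $-\f12$ under the same convention guarantees that the two limits commute with the substitution. The likely obstacle is verifying that all manipulations (evaluation of the triangle kernel at a boundary point of $\TT^2$, interchange of integrations, and passage to the boundary values of the parameters) are legitimate; continuity of both sides in $(\mu,\g)$ on the closed parameter range, together with the absolute convergence of the integrals for $\mu>0$ and $\g>-\f12$, will handle this by a standard limiting argument.
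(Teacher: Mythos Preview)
Your proposal is correct and follows essentially the same route as the paper: substitute the closed triangle formula \eqref{eq:trianglePn2} (with parameters $(\a-\tfrac12,\a-\tfrac12,\g)$ and second argument $(s,0)\in\TT^2$) into \eqref{eq:PbCone}, after which the $t_2$-integral drops out by normalization and the remaining algebra is exactly what you describe. The paper records the resulting double-integral identity as \eqref{eq:closedPV} without spelling out the $t_2$-elimination you make explicit, and it adds a one-line Cauchy--Schwarz verification that $\xi\in[-1,1]$, which you should also include.
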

 
\begin{proof}
By \eqref{eq:PV-PT}, we use \eqref{eq:trianglePn2} with $(\a, \b, \g) = (\mu + \f{d-2}2,\mu + \f{d-2}2,\g)$, 
$x_1 = (t+ \zeta)/2$, $x_2 = (t- \zeta)/2$, $y_1 = s$ and $y_2 =0$. Then, for $\eta$ defined by 
\eqref{eq:zetaT}, we have
\begin{align*} 
 \eta\left( (\tfrac{t+\zeta}{2}, \tfrac{t-\zeta}{2}), (s, 0), v) \right) = \sqrt{\tfrac{t+\zeta}{2} s} \, v_1 + \sqrt{1-t}\sqrt{1-s}\, v_2,    
\end{align*}
so that, by \eqref{eq:PV-PT},  
\begin{align} \label{eq:closedPV}
 & \Pb_n \left(\wh \varpi_{\a,\g}; (\zeta,t), (s,s) \right) = c_{\a-\f{1}{2}} c_\g  \\
  \times  \int_{-1}^1\int_{-1}^1 & Z_{2n}^{2 \a+\g+1} \left( \sqrt{\tfrac{t+\zeta}{2} s} \, v_1 + \sqrt{1-t}\sqrt{1-s}\, v_2\right)    
    (1-v_1^2)^{\a- 1}(1-v_2^2)^{\g-\f12}   \d v. \notag
\end{align}
Consequently, setting $\zeta = \zeta(x,t, y,s; u)$ defined in \eqref{eq:zetaCone}, we obtain
\eqref{eq:PbCone2} from \eqref{eq:PV-PT} and \eqref{eq:PbCone}. Finally, since $\|x\|\le t$ and $\|y\|\le s$, 
Cauchy's inequality shows that 
$$
  |\la x,y\ra + \sqrt{t^2-\|x\|^2} \sqrt{s^2-\|y\|^2} \, u| \le \|x\|\cdot\|y\|+  \sqrt{t^2-\|x\|^2} \sqrt{s^2-\|y\|^2} \le t s,
$$
form which it follows easily that $\xi (x,t, y,s; u, v) \in [-1,1]$. 
\end{proof}

In view of \eqref{eq:PnConeDef}, the identity \eqref{eq:PbCone2} can be regarded as an addition formula
for the Jacobi polynomials on the cone. 

\subsection{Jacobi polynomials on the cone with $\b > 0$} 
For $\b > 0$, the reproducing kernels of $\CV_n(\VV^{d+1}, W_{\mu,\b,\g})$ also satisfy a closed formula.
The formula will be more involved as it requires an additional layer of complication as can be seen in 
the next theorem and its proof. 
 
\begin{thm} \label{thm:PnConeG}
Let $d \ge 2$. For $\mu \ge 0$, $\b > 0$ and $\g > -1$, let $\a =  \mu + \frac{\b+d-1}{2}$. Then,
for $(x,t), (y,s) \in \VV^{d+1}$,  
\begin{align}\label{eq:PbConeG}
  \Pb_n \big(W_{\mu,\b,\g}; \, & (x,t), (y,s)\big)  = \wh c_{\mu,\b}  \int_{[-1,1]^3}
      \Pb_n \left(\wh \varpi_{\a, \g}; (\wh \zeta(x,t, y,s; z, u),t), (s,s) \right) \\
  &  \times  (1-u^2)^{\mu-1}  (1-z_1)^{\mu+\f{d-1}{2}} (1+z_1)^{\f{\b}2 -1} (1-z_2^2)^{\f{\b-1}{2}} \d u \d z, \notag
\end{align} 
where $\wh c_{\mu,\b} = c_{\mu-\f12} c_{\mu+\f{d-1}{2}, \f{\b-1}{2}}  c_{\frac{\b}{2}}$ and 
\begin{equation} \label{eq:hat-zetaCone}
\wh \zeta(x,t, y,s; z, u): = \frac{1-z_1}{2} \left( \frac{\la x,y \ra}{s} + \frac{\sqrt{t^2-\|x\|^2} \sqrt{s^2-\|y\|^2}}{s} u \right)
  +  \frac{1+z_1}{2} z_2t.
\end{equation}
In the case $\mu =  0$ or $\b = -\f12$, the identity \eqref{eq:PbConeG} holds under the limit \eqref{eq:limit-int}. 
\end{thm}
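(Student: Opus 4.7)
The plan is to adapt the strategy of \thmref{thm:PnCone}, with one extra integration absorbing the factor $t^\b$ in $W_{\mu,\b,\g}$. Set $\a=\mu+\f{\b+d-1}{2}$ and $\a_0=\mu+\f{d-1}{2}$, so that $\a=\a_0+\b/2$. Expanding $\Pb_n(W_{\mu,\b,\g};(x,t),(y,s))$ in the basis $Q^n_{m,\kb}$ of Proposition \ref{prop:OPconeJ}, summing over $|\kb|=m$ via the ball kernel formula \eqref{eq:PnBall}, and converting arguments with the identity $\la x/t,y/s\ra + u\sqrt{1-\|x\|^2/t^2}\sqrt{1-\|y\|^2/s^2} = \zeta(x,t,y,s;u)/t$ used in the proof of \thmref{thm:PnCone}, yields
\begin{equation*}
\Pb_n(W_{\mu,\b,\g};(x,t),(y,s)) = c_{\mu-\f12}\int_{-1}^1 \Pi_n(u)\,(1-u^2)^{\mu-1}\,\d u,
\end{equation*}
where, abbreviating $\zeta=\zeta(x,t,y,s;u)$,
\begin{equation*}
\Pi_n(u) = \sum_{m=0}^n \frac{P_{n-m}^{(2\a+2m,\g)}(1-2t)\,P_{n-m}^{(2\a+2m,\g)}(1-2s)}{H_{m,n}^{(\a,\g)}}\, t^m s^m\, Z_m^{\a_0}\!\left(\zeta/t\right).
\end{equation*}

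In the case $\b=0$ treated by \thmref{thm:PnCone}, the Gegenbauer index $\a_0$ inside $Z_m^{\a_0}$ matches the Jacobi parameter, so that \eqref{eq:trianglePV2} together with \eqref{eq:norm=norm} turns $\Pi_n(u)$ directly into the triangle kernel $\Pb_n(\wh\varpi_{\a_0,\g};(\zeta,t),(s,s))$. For $\b>0$ this is no longer automatic: the Jacobi index is $2\a+2m=2\a_0+\b+2m$ while the Gegenbauer inside is $Z_m^{\a_0}$. To repair the mismatch, I plan to insert an index-raising identity of the form
\begin{equation*}
Z_m^{\a_0}(\tau) = C\!\int_{-1}^1\!\int_{-1}^1 Z_m^{\a}\!\left(\tfrac{1-z_1}{2}\tau + \tfrac{1+z_1}{2}z_2\right)(1-z_1)^{\a_0}(1+z_1)^{\b/2-1}(1-z_2^2)^{(\b-1)/2}\,\d z_1\,\d z_2
\end{equation*}
valid for $\tau\in[-1,1]$ and every $m\ge 0$, with an explicit constant $C$. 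Applied with $\tau=\zeta/t$, it uses \eqref{eq:hat-zetaCone} (which reads $\wh\zeta/t = \f{1-z_1}{2}(\zeta/t) + \f{1+z_1}{2}z_2$); after swapping the $(z_1,z_2)$-integral past the $m$-sum by Fubini, the bracketed sum becomes precisely $\Pb_n(\wh\varpi_{\a,\g};(\wh\zeta,t),(s,s))$ via \eqref{eq:trianglePV2} and the norm identity $h_m^{(\a-\f12,\a-\f12)}/h_{m,n}^{(\a-\f12,\a-\f12,\g)}=1/H_{m,n}^{(\a,\g)}$, which follows from \eqref{eq:JacobiNorm}, \eqref{eq:triangleOPnorm} and \eqref{eq:coneJnorm} for the new $\a$ exactly as in \eqref{eq:norm=norm}. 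Combining with the outer $u$-integral produces \eqref{eq:PbConeG}, with $\wh c_{\mu,\b}$ absorbing $c_{\mu-\f12}$ and the constant $C$.

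The main obstacle is the Gegenbauer index-raising identity. The cleanest route is to show that the right-hand side is a polynomial in $\tau$ of degree at most $m$ that is orthogonal on $[-1,1]$ to every polynomial of lower degree with respect to the weight $(1-\tau^2)^{\a_0-\f12}$, whence it must be a scalar multiple of $Z_m^{\a_0}(\tau)$. The weights $(1-z_1)^{\a_0}(1+z_1)^{\b/2-1}$ and $(1-z_2^2)^{(\b-1)/2}$ are precisely those for which the push-forward under $(\tau,z_1,z_2)\mapsto \wh\tau = \f{1-z_1}{2}\tau + \f{1+z_1}{2}z_2$, combined with the Gegenbauer weight in $\tau$, reproduces the Gegenbauer weight of index $\a$; Fubini then reduces the required orthogonality for the RHS to that of $Z_m^{\a}$ on $[-1,1]$ with weight $w_{\a}$, and a standard beta-integral computation pins down $C$ from the value at $\tau=1$. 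An alternative derivation compares the generating functions $\sum_m Z_m^\l(x)r^m = (1-r^2)/(1-2xr+r^2)^{\l+1}$ on both sides: integrating the $(z_1,z_2)$-factor against the Poisson kernel at index $\a$ should, via a Beta identity, collapse to the Poisson kernel at index $\a_0$. The degenerate case $\mu=0$ is handled by \eqref{eq:limit-int} exactly as in \thmref{thm:PnCone2}.
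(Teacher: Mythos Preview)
Your proposal is correct and follows essentially the same route as the paper: expand in the basis $Q^n_{m,\kb}$, sum over $|\kb|=m$ via the ball kernel \eqref{eq:PnBall} to get the analogue of \eqref{eq:PbCone1.5} with $Z_m^{\a_0}$ in place of $Z_m^{\a}$, then apply the Gegenbauer index-raising identity to lift $\a_0$ to $\a=\a_0+\b/2$ and recognise the triangle kernel via \eqref{eq:trianglePV2} and \eqref{eq:norm=norm}. The only difference is that the paper simply quotes the index-raising identity \eqref{eq:ZtoZ} from \cite{X15} (with $\l=\a_0$, $\s=\b/2$, giving exactly your weights and constant $C=c_{\a_0,\b/2-1}c_{\b/2}$), whereas you sketch two ab initio proofs of it; either of your sketches would work, but citing the known result is shorter.
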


\begin{proof}
Following the proof of Theorem \ref{thm:PnCone}, we see that \eqref{eq:PbCone1.5} becomes 
\begin{align}\label{eq:PbCone1.5h}
  \Pb_n(W_{\mu,\b,\g}; (x,t), (y,s))
   =    c_{\mu-\f12} \int_{-1}^1 
     \sum_{m=0}^n & \frac{P_{n-m}^{(2\a+2m,\g)}(1-2t)P_{n-m}^{(2\a + 2m,\g)}(1-2s)} {H_{m,n}^{\a,\g}}  t^m s^m  \notag \\
      & \times  Z_m^{\mu + \f{d-1}{2}} \left( \frac{\zeta(x,t,y,s; u)}{t}\right) (1-u^2)^{\mu-1}\d u.
\end{align}
Notice that the index of $Z_m^{\mu + \f{d-1}{2}}$ is equal to $\a - \frac{\b}{2}$. In order to follow the proof of
Theorem \ref{thm:PnCone}, we need to increase this index to $\a$, which can be done by using the following 
identity, proved recently in \cite{X15},
\begin{align}\label{eq:ZtoZ}
  Z_m^\l(t) =  c_{\l,\s-1} c_{\s}  \int_{-1}^1 \int_{-1}^1 & Z_m^{\l+\s} \left( \tfrac{1-z_1}{2} t + \tfrac{1+z_1}2 z_2\right)\\
      & \times (1-z_1)^\l (1+z_1)^{\s-1}  (1-z_2^2)^{\s-\f12}  \d z \notag
\end{align}
with $\l = \mu+\frac{d-1}{2}$ and $\s = \frac{\b}{2}$, so that \eqref{eq:PbCone1.5h} becomes
\begin{align*}
  \Pb_n(W_{\mu,\b,\g}; (x,t), (y,s))
   =  \, &  \wh c_{\mu,\b} \int_{[-1,1]^3} 
       \sum_{m=0}^n  \frac{P_{n-m}^{(2\a+2m,\g)}(1-2t)P_{n-m}^{(2\a + 2m,\g)}(1-2s)}  {H_{m,n}^{\a,\g}}  \notag \\
      & \times    t^m s^m  Z_m^{\a} \left( \frac{1-z_1}{2}\frac{\zeta(x,t,y,s; u)}{t} + \frac{1+z_1}2 z_2\right) \\
      & \times (1-u^2)^{\mu-1}  (1-z_1)^\l (1+z_1)^{\s-1}  (1-z_2^2)^{\s-\f12}  \d z\, \d u.
\end{align*}
Comparing with \eqref{eq:PbCone1.5}, we see that the rest of the proof follows exactly as in the proof of 
Theorem \ref{thm:PnCone}. 
\end{proof}

We also have an analogue of Theorem \ref{thm:PnCone2} that shows $\Pb_n \big(W_{\mu,\b, \g})$ also
possesses a structure of one-dimension. 

\begin{thm} \label{thm:PbCone2G}
Let $d \ge 2$. For $\mu \ge 0$, $\b > 0$ and $\g \ge -\f12$, let $\a = \mu + \frac{\b}{2}+ \frac{d-1}{2}$. Then 
\begin{align}\label{eq:PbConeG2}
  \Pb_n \big(W_{\mu,\b, \g}; (x,t), (y,s)\big) =\, & 
   \wh c_{\mu,\b}  c_{\a -\f{1}{2}} c_\g \int_{[-1,1]^5}  Z_{2n}^{2 \a+\g+1} (\wh \xi (x, t, y, s; z, u, v)) \\
     &\times  (1-z_1)^{\mu+\f{d-1}{2}} (1+z_1)^{\f{\b}2 -1} (1-z_2^2)^{\f{\b-1}{2}}  \notag \\  
     & \times (1-u^2)^{\mu-1} du (1-v_1^2)^{\a - 1}(1-v_2^2)^{\g-\f12} \d z \,\d u\, \d v, \notag
\end{align}
where $\wh \xi (x,t, y,s; u, v) \in [-1,1]$ is defined by 
\begin{align} \label{eq:hatxi}
& \wh \xi (x,t, y,s; u, v)  =  v_2 \sqrt{1-t}\sqrt{1-s} \\
     & \quad +\frac12 v_1 \sqrt{2 s t + (1-z_1) \left( \la x,y \ra + \sqrt{t^2-\|x\|^2} \sqrt{s^2-\|y\|^2} \, u \right) 
   +  (1+z_1)z_2 st}. \notag
\end{align}
In the case $\mu =  0$ or $\g = -\f12$, the identity \eqref{eq:PbConeG} holds under the 
limit \eqref{eq:limit-int}. 
\end{thm}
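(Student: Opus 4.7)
The plan is to combine the intermediate triple-integral representation from Theorem~\ref{thm:PnConeG} with the already-established closed formula \eqref{eq:closedPV} for the triangle kernel, in direct parallel with how Theorem~\ref{thm:PnCone2} was derived from Theorem~\ref{thm:PnCone}.

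First, I would apply Theorem~\ref{thm:PnConeG} to rewrite $\Pb_n(W_{\mu,\b,\g}; (x,t),(y,s))$ as a triple integral over $(-1,1)^3$, whose integrand is $\Pb_n(\wh\varpi_{\a,\g}; (\wh\zeta(x,t,y,s;z,u), t), (s,s))$ against the weight $(1-u^2)^{\mu-1}(1-z_1)^{\mu+\f{d-1}{2}}(1+z_1)^{\f{\b}{2}-1}(1-z_2^2)^{\f{\b-1}{2}}$, with $\wh\zeta$ given by \eqref{eq:hat-zetaCone}. Then I would substitute the closed-form expression \eqref{eq:closedPV} for the triangle kernel with its first argument equal to $\wh\zeta$ in place of $\zeta$; this introduces two further variables $v_1, v_2\in[-1,1]$ and the weight $(1-v_1^2)^{\a-1}(1-v_2^2)^{\g-\f12}$, together with the constants $c_{\a-\f12}c_\g$, producing a quintuple integral whose integrand is $Z_{2n}^{2\a+\g+1}\bigl(v_1\sqrt{\tfrac{t+\wh\zeta}{2}s}+v_2\sqrt{1-t}\sqrt{1-s}\bigr)$.

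The next step is the algebraic verification that this $Z_{2n}$ argument is exactly $\wh\xi(x,t,y,s;z,u,v)$ defined in \eqref{eq:hatxi}. Directly from \eqref{eq:hat-zetaCone},
\begin{align*}
\frac{t+\wh\zeta}{2}\cdot s = \frac{1}{4}\Bigl[2st + (1-z_1)\bigl(\la x,y\ra + \sqrt{t^2-\|x\|^2}\sqrt{s^2-\|y\|^2}\,u\bigr) + (1+z_1)z_2 st\Bigr],
\end{align*}
so that $v_1\sqrt{\tfrac{t+\wh\zeta}{2}s}$ reproduces the second summand in \eqref{eq:hatxi}, while the $v_2\sqrt{1-t}\sqrt{1-s}$ term is already the first summand. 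After collecting constants $\wh c_{\mu,\b}\cdot c_{\a-\f12}c_\g$, this yields \eqref{eq:PbConeG2}.

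Finally, I would verify $\wh\xi\in[-1,1]$ so that $Z_{2n}^{2\a+\g+1}$ is applied to a legitimate argument. Cauchy--Schwarz gives $|\la x,y\ra + \sqrt{t^2-\|x\|^2}\sqrt{s^2-\|y\|^2}\,u|\le st$ as in the proof of Theorem~\ref{thm:PnCone2}, so the radicand under the inner square root in \eqref{eq:hatxi} is bounded above by $2st+(1-z_1)st+(1+z_1)st = 4st$ and is nonnegative by the same bound; hence the inner square root is at most $\sqrt{st}$, and another application of Cauchy--Schwarz gives $\sqrt{st}+\sqrt{1-t}\sqrt{1-s}\le 1$. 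The degenerate cases $\mu=0$ and $\g=-\f12$ are absorbed via the limit \eqref{eq:limit-int}, exactly as in Theorem~\ref{thm:PnCone2}. The only delicate point is the algebraic collapse in the third step, where the square roots from the triangle formula and from $\wh\zeta$ must combine cleanly into the single nested radical of \eqref{eq:hatxi}; everything else is a routine assembly of the two previous theorems.
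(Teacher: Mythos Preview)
Your proposal is correct and follows essentially the same route as the paper: start from the triple-integral representation \eqref{eq:PbConeG} of Theorem~\ref{thm:PnConeG}, substitute the closed formula \eqref{eq:closedPV} for the triangle kernel with $\wh\zeta$ in place of $\zeta$, and identify the resulting argument of $Z_{2n}^{2\a+\g+1}$ with $\wh\xi$. In fact you supply more detail than the paper's own short proof, including the explicit algebra $\tfrac{t+\wh\zeta}{2}\,s=\tfrac14[\,2st+(1-z_1)(\cdots)+(1+z_1)z_2 st\,]$ and the verification that $\wh\xi\in[-1,1]$ (one tiny phrasing slip: the square root $\sqrt{2st+\cdots}$ itself is bounded by $2\sqrt{st}$, and it is the whole term $\tfrac12 v_1\sqrt{\cdots}$ that is bounded by $\sqrt{st}$---your final inequality is still right).
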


\begin{proof}
The proof can be carried out as that of Theorem \ref{thm:PnConeG}. By \eqref{eq:PV-PT}, we again 
use \eqref{eq:trianglePn2} but with $(\a, \b, \g) = (\mu + \f{\b+d-2}2,\mu + \f{\b+d-2}2,\g)$, 
$x_1 = (t+ \wh \zeta)/2$, $x_2 = (t- \wh \zeta)/2$, $y_1 = s$ and $y_2 =s$, where $\wh \eta$ is defined by 
\eqref{eq:hat-zetaCone}. This leads to the formula $\Pb_n \left(\wh \varpi_{\a,\g}; (\zeta,t), (s,s) \right)$
given in \eqref{eq:closedPV} with $\zeta$ replaced by $\wh \zeta$ and $\a = \mu + \frac{\b + d-1}{2}$. 
Substituting it into \eqref{eq:PbConeG}, we then have \eqref{eq:PbConeG2}. 
\end{proof}

Although the formula \eqref{eq:PbConeG2} is fairly complicated, what is of important is that it shows
that the kernel has a one-dimensional structure. We will make use of this structure in the following section. 

\section{Convolution structure on the cone}
\setcounter{equation}{0}
 
The closed formula for the reproducing kernels suggests a convolution structure on the cone that is useful
in the study of Fourier series in the Jacobi polynomials on the cone. For this development, what is important
is the one-dimensional structure of the kernels, not the explicit formula of the closed formula itself.  

We start with a definition suggested by the closed formula \eqref{eq:PbConeG2}.

\begin{defn}\label{defn:Tmg} 
Let $d \ge 2$. For $\mu \ge 0$, $\b \ge 0$ and $\g \ge -\f12$, define $\a = \mu + \frac{\b+d-1}{2}$. 
For $g\in L^1([-1,1],w_{2\a+\g+1})$, we define the operator $T_{\mu,\b, \g}$ on the cone $\VV^{d+1}$ by
\begin{align} \label{eq:Tmg2}
   T_{\mu,\b, \g} g\big((x,t),(y,s)\big) := & \wh c_{\mu,\b}  c_{\a -\f{1}{2}} c_\g \int_{[-1,1]^5}  
     g \left( \wh \xi (x, t, y, s; z, u, v)\right) \\
     &\times  (1-z_1)^{\mu+\f{d-1}{2}} (1+z_1)^{\f{\b}2 -1} (1-z_2^2)^{\f{\b-1}{2}}  \notag \\  
     & \times (1-u^2)^{\mu-1} du (1-v_1^2)^{\a - 1}(1-v_2^2)^{\g-\f12} \d z \,\d u\, \d v, \notag
\end{align}
where $\wh \xi(x,t,y,s;u,v)$ is defined by \eqref{eq:hatxi}. When $\mu = 0$ or $\b = 0$ or $\g = -\f12$, 
the definition holds under the limit \eqref{eq:limit-int}. If $\b = 0$, the definition is simplified to 
\begin{align} \label{eq:Tmg}
   T_{\mu,0,\g} g\big((x,t),(y,s)\big) := & c_{\mu-\f12}  c_{\a-\f{1}{2}} c_\g \int_{[-1,1]^3}  g \left( \xi (x, t, y, s; u, v)\right) \\
     &\times   (1-u^2)^{\mu-1} du (1-v_1^2)^{\a-1}(1-v_2^2)^{\g-\f12}  \d u \d v, \notag 
\end{align}
where $\xi(x,t,y,s;u,v)$ is defined by \eqref{eq:xi}. 
\end{defn}

By the closed formulas of the reproducing kernel \eqref{eq:PbCone2} and \eqref{eq:PbConeG2}, we immediately
obtain
\begin{equation} \label{eq:PbZn}
   \Pb_n\big(W_{\mu,\b,\g}; (x,t), (y,s)\big) = T_{\mu,\b,\g} Z_{2n}^{2\a + \g+ 1} \big((x,t),(y,s)\big),
\end{equation}
which is our motivation for the definition of $T_{\mu,\b,\g}$. 

In the following, we consider the $L^p$ norm $\|\cdot\|_{L^p}$ for $1\le p \le \infty$. We will always assume that
the case $p = \infty$ is the uniform norm over continuous functions. Because of \eqref{eq:PbZn} and the 
fact that $Z_{2n}^{\l}$ is an even polynomial, we only need the action of $T_{\mu,\b,\g}$ on the function $g$ that
is even for the purpose of studying the Fourier orthogonal series.  

\begin{lem} \label{lem:translateT}
Let $g \in L^1([-1,1],w_{2\a+\g+1})$ be an even function on $[-1,1]$. Then 
\begin{enumerate}
\item for each $Q_n \in \CV_n (\VV^{d+1}, W_{\mu,\b,\g})$, 
\begin{equation}\label{eq:FHcone}
  b_{\mu,\b,\g} \int_{\VV^{d+1}}  T_{\mu,\b,\g} g\big((x,t),(y,s)\big) Q_n (y,s) W_{\mu,\b,\g}(y,s) \d y \d s 
        =  \Lambda_n (g) Q_n(x,t),
\end{equation}
where $b_{\mu,\b,\g}$ is the normalization constant of $ W_{\mu,\b,\g}$ and
$$
 \Lambda_n (g) = c_{2\a+\g+1} \int_{-1}^1 g(t) \frac{C_{2n}^{2\a+\g+1} (t)} {C_{2n}^{2\a+\g+1}(1)}
      (1-t^2)^{2 \a+\g + \f12} \d t.
$$
\item for $1\le p \le \infty$ and $(x,t) \in \VV^{d+1}$, 
\begin{equation}\label{eq:Tbound}
  \left \| T_{\mu,\b,\g} g\big((x,t),(\cdot,\cdot)\big) \right\|_{L^p(\VV^{d+1}, W_{\mu,\b,\g})}
   \le \|g \|_{L^p([-1,1],w_{2\a + \g+1})}.
\end{equation}
\end{enumerate}
\end{lem}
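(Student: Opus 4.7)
The plan is to exploit the identity \eqref{eq:PbZn}, which says that $T_{\mu,\b,\g}$ sends each even Gegenbauer polynomial $Z_{2k}^{\lambda}$ (with $\lambda := 2\a+\g+1$) to the reproducing kernel $\Pb_k(W_{\mu,\b,\g};\cdot,\cdot)$. Together with the reproducing property and Jensen's inequality, this yields both parts. Logically it is cleanest to establish (2) first and deduce (1) from it by approximation.

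For part (2), the $n = 0$ case of \eqref{eq:PbZn} together with $\Pb_0 \equiv 1$ and $Z_0^\lambda \equiv 1$ gives $T_{\mu,\b,\g}1 \equiv 1$. Inspection of \eqref{eq:Tmg2} shows that $T_{\mu,\b,\g}$ is a positive linear operator whose integrand pairs each weight factor with its normalizing constant, i.e.\ a probability average on $[-1,1]^5$. Jensen's inequality then gives
\[
  \bigl|T_{\mu,\b,\g} g\bigl((x,t),(y,s)\bigr)\bigr|^p \le T_{\mu,\b,\g}\bigl(|g|^p\bigr)\bigl((x,t),(y,s)\bigr), \qquad 1 \le p < \infty,
\]
with the obvious pointwise bound when $p=\infty$. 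Integrating in $(y,s)$ against $W_{\mu,\b,\g}$ and interchanging the order of integration by Fubini, the total-mass-one property collapses the $(y,s)$-integral to $c_\lambda \int_{-1}^{1}|g(\tau)|^p w_\lambda(\tau)\,d\tau = \|g\|_{L^p(w_\lambda)}^p$, which yields \eqref{eq:Tbound}.

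For part (1), I would expand the even function $g$ in the Gegenbauer basis, $g = \sum_k a_k Z_{2k}^\lambda$. A short orthogonality computation using \eqref{eq:GegenNorm} and the definition of $Z_n^\lambda$ in \eqref{eq:Zn} identifies $a_k = \Lambda_k(g)$ and also yields $\Lambda_n\bigl(Z_{2k}^\lambda\bigr) = \delta_{k,n}$. On this basis, \eqref{eq:FHcone} reduces, via \eqref{eq:PbZn}, to the reproducing property
\[
  b_{\mu,\b,\g}\int_{\VV^{d+1}} \Pb_k\bigl((x,t),(y,s)\bigr) Q_n(y,s)\, W_{\mu,\b,\g}(y,s)\, dy\, ds = \delta_{k,n}\, Q_n(x,t),
\]
so both sides of \eqref{eq:FHcone} agree. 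Extension to general $g \in L^1([-1,1], w_\lambda)$ follows from density of polynomials combined with continuity of both sides in this norm: trivially on the right, and on the left via Cauchy--Schwarz against the fixed $Q_n \in L^2(W_{\mu,\b,\g})$ together with the bound \eqref{eq:Tbound} just established.

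The only genuine analytic ingredient is Jensen's inequality plus the reproducing property of $\Pb_k$; the main potential pitfall is avoiding circularity, which is handled by proving (2) directly via Fubini rather than invoking (1) first. Everything else is bookkeeping around the closed formula \eqref{eq:PbConeG2} packaged into \eqref{eq:PbZn}.
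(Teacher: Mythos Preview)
Your treatment of part~(1) is essentially the paper's: expand an even polynomial $g$ in the basis $\{Z_{2k}^{\lambda}\}$ with $\lambda=2\a+\g+1$, identify the coefficients as $\Lambda_k(g)$, apply \eqref{eq:PbZn} and the reproducing property of $\Pb_k$, then extend by density.

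The gap is in your proof of part~(2). You assert that after Fubini ``the total-mass-one property collapses the $(y,s)$-integral to $c_\lambda\int_{-1}^{1}|g(\tau)|^p w_\lambda(\tau)\,d\tau$''. But the total-mass-one property concerns the $(z,u,v)$-average in \eqref{eq:Tmg2}, not the $(y,s)$-integral. For fixed $(x,t,z,u,v)$ the map $(y,s)\mapsto\wh\xi(x,t,y,s;z,u,v)$ pushes the probability measure $b_{\mu,\b,\g}W_{\mu,\b,\g}(y,s)\,dy\,ds$ forward to \emph{some} measure on $[-1,1]$; nothing in the definition says this is $c_\lambda w_\lambda(\tau)\,d\tau$. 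The identity you need here is exactly
\[
  b_{\mu,\b,\g}\int_{\VV^{d+1}} T_{\mu,\b,\g}h\bigl((x,t),(y,s)\bigr)\,W_{\mu,\b,\g}(y,s)\,dy\,ds
   \;=\; c_\lambda\int_{-1}^1 h(\tau)\,w_\lambda(\tau)\,d\tau,
\]
which is the $n=0$ case of \eqref{eq:FHcone} applied to $h=|g|^p$. So your plan to prove (2) before (1) ``to avoid circularity'' is itself circular: the Fubini step silently invokes the very statement you deferred.

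The paper's ordering avoids this. It first proves \eqref{eq:FHcone} for even \emph{polynomials} $g$ via the finite Gegenbauer expansion and the reproducing property; no density (hence no appeal to (2)) is needed at this stage. The $n=0$ case of this polynomial identity then yields the $p=1$ bound in \eqref{eq:Tbound} via $|T_{\mu,\b,\g}g|\le T_{\mu,\b,\g}|g|$, the $p=\infty$ bound is trivial, and Riesz--Thorin interpolation gives $1<p<\infty$. Your Jensen argument is a perfectly good substitute for Riesz--Thorin, but only \emph{after} the $n=0$ case of (1) is available; it is not an independent route to (2).
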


\begin{proof}
If $g$ is an even polynomial of degree at most $n$, then we can expand it in terms of the Gegenbauer polynomials
of even degrees. That is, we can write
$$
g(t) = \sum_{m=0}^n \Lambda_k Z_{2k}^{2\a+\g+1} (t),
$$ 
where, by orthogonality and the fact that $h_k^\l = \frac{\l}{k+\l} C_k^\l(1)$,
$$
   \Lambda_k =  \frac{c_{2\a+\g+1}}{C_{2k}^{2\a+\g+1}(1)} \int_{-1}^1 g(t) C_{2k}^{2\a+\g+1} (t) 
      (1-t^2)^{2 \a+\g+ \f12} \d t.
$$
Using the formula \eqref{eq:PbZn}, we have
$$
  T_{\mu,\b,\g} g\big((x,t),(y,s)\big) = \sum_{k=0}^n \Lambda_k \Pb_k \big(W_{\mu,\b,\g}; (x,t),(y,s) \big). 
$$
Consequently, by the reproducing property, for each $Q_k \in \CV_n(\VV^{d+1}, W_{\mu,\b,\g})$, $k \le n$, we
conclude  
$$
  b_{\mu,\b,\g}\int_{\VV^{d+1}}  T_{\mu,\b,\g} g\big((x,t),(y,s)\big) Q(y,s) W_{\mu,\b,\g}(y,s) \d y \d s 
        = \Lambda_k Q_k(x,t). 
$$
This establishes the lemma when $g$ is a polynomial. The usual density argument then completes the 
proof of (1). 

For $p =\infty$, the inequality \eqref{eq:Tbound} is evident. If $g$ is nonnegative, then $T_{\mu,\g} g$ is evidently 
nonnegative, so that $|T_{\mu,\b,\g} g| \le T_{\mu,\b,\g}(|g|)$. Hence, applying (1) with $n=0$ we see that
the inequality \eqref{eq:Tbound} holds for $p =1$. The case $1 < p < \infty$ follows from the Riesz-Thorin theorem.
\end{proof}
 
For $(x,t) \in \VV^{d+1}$, the operator $g \mapsto T_{\mu,\b,\g} g \big((x,t), (\cdot,\cdot)\big)$ defines a 
``translation" of $g$ by $(x,t)$. We use this operator to define a convolution structure with respect to 
$W_{\mu,\b,\g}$ on the cone. 

\begin{defn}\label{defn:convolCone}
Let $\mu \ge 0$, $\b \ge 0$ and $\g \ge -\f12$, and let $\a = \mu + \f{\b}{2} + \frac{d-1}{2}$. 
For $f \in L^1(\VV^{d+1}, W_{\mu,\b,\g})$ and $g \in L^1([-1,1],w_{2\a+\g+1})$, define the convolution 
of $f$ and $g$ on the cone by 
$$
  (f \ast_{\mu,\b, \g} g)(x,t)  := b_{\mu,\b,\g} \int_{\VV^{d+1}} f(y,s) 
      T_{\mu,\b,\g} g \big((x,t),(y,s)\big) W_{\mu,\b, \g}(y,s) \d y \d s.
$$
\end{defn}

The convolution on the cone satisfies Young's inequality:

\begin{thm} \label{thm:Young}
Let $p,q,r \ge 1$ and $p^{-1} = r^{-1}+q^{-1}-1$. For $f \in L^q\left( \VV^{d+1}, W_{\mu,\b,\g}\right)$ and
$g \in L^r(w_{2\a+\g+1};[-1,1])$ with $g$ an even function, 
\begin{equation} \label{eq:Young}
  \|f \ast_{\mu,\b,\g} g\|_{L^p\left( \VV^{d+1}, W_{\mu,\b,\g} \right)} \le \|f\|_{L^q\left( \VV^{d+1}, W_{\mu,\b,\g}\right)}
                   \|g\|_{L^r([-1,1],w_{2\a+\g+1})}.
\end{equation}
\end{thm}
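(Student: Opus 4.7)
The plan is to adapt the classical three-function H\"older proof of Young's inequality to this weighted cone setting. The two essential inputs are \eqref{eq:Tbound} from \lemref{lem:translateT}, which gives the $L^p$-boundedness of $T_{\mu,\b,\g}$ in its second argument, and a symmetry of the translation kernel that promotes this boundedness to the first argument as well.

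First I would observe, from the explicit formulas \eqref{eq:xi} and \eqref{eq:hatxi}, that both $\xi$ and $\wh\xi$ are invariant under the swap $(x,t)\leftrightarrow(y,s)$: every cone-dependent expression that appears---namely $st$, $\la x,y\ra$, $\sqrt{t^2-\|x\|^2}\sqrt{s^2-\|y\|^2}$, and $\sqrt{1-t}\sqrt{1-s}$---is manifestly symmetric, and the auxiliary weights in \eqref{eq:Tmg2} and \eqref{eq:Tmg} do not involve the cone variables at all. Consequently $T_{\mu,\b,\g} g\big((x,t),(y,s)\big) = T_{\mu,\b,\g} g\big((y,s),(x,t)\big)$, and \eqref{eq:Tbound} then yields the dual estimate
\begin{equation*}
\bigl\|T_{\mu,\b,\g} g\big((\cdot,\cdot),(y,s)\big)\bigr\|_{L^p(\VV^{d+1},W_{\mu,\b,\g})} \le \|g\|_{L^p([-1,1],w_{2\a+\g+1})}
\end{equation*}
for every fixed $(y,s)\in\VV^{d+1}$.

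With these two $L^p$ estimates in hand, I would run the standard three-exponent H\"older argument. Writing $K(x,t;y,s) := T_{\mu,\b,\g}|g|\big((x,t),(y,s)\big)$ and $\d\mu := b_{\mu,\b,\g} W_{\mu,\b,\g}\,\d y\,\d s$, positivity of the weights in Definition~\ref{defn:Tmg} gives the pointwise inequality $|(f \ast_{\mu,\b,\g} g)(x,t)| \le \int |f|\, K\, \d\mu$, and I would factor the integrand as
\begin{equation*}
  |f(y,s)|\, K(x,t;y,s) = \bigl(|f|^q K^r\bigr)^{1/p}\cdot |f|^{1-q/p}\cdot K^{1-r/p}.
\end{equation*}
H\"older's inequality in $\d\mu(y,s)$ with exponents $p$, $\alpha = \tfrac{pq}{p-q}$, $\beta = \tfrac{pr}{p-r}$ (which satisfy $\tfrac1p + \tfrac1\alpha + \tfrac1\beta = 1$ precisely by the hypothesis $\tfrac1p = \tfrac1q + \tfrac1r - 1$), together with \eqref{eq:Tbound} at exponent $r$ to control the third factor, will produce
\begin{equation*}
 |(f\ast_{\mu,\b,\g} g)(x,t)|^p \le \|f\|_{L^q(W_{\mu,\b,\g})}^{p-q}\, \|g\|_{L^r(w_{2\a+\g+1})}^{p-r}\int_{\VV^{d+1}} |f(y,s)|^q K(x,t;y,s)^r \,\d\mu(y,s).
\end{equation*}
Integrating in $\d\mu(x,t)$, applying Fubini, and invoking the dual estimate from the previous paragraph to bound the inner integral by $\|g\|_{L^r(w_{2\a+\g+1})}^r$, the right-hand side collapses to $\|f\|_{L^q(W_{\mu,\b,\g})}^p \|g\|_{L^r(w_{2\a+\g+1})}^p$, which is the desired conclusion.

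The main obstacle is really the symmetry observation in the first paragraph; once that is secured, the rest is standard bookkeeping for Young's inequality in a weighted bilinear setting. The degenerate exponent cases (such as $q=1$, $r=1$, or $p=\infty$, where $\alpha$ or $\beta$ collapses) require only minor adjustments treated directly from \eqref{eq:Tbound} at the relevant endpoint. The limiting parameter cases $\mu=0$, $\b=0$, or $\g=-\tfrac12$---for which $T_{\mu,\b,\g}$ is defined through the limit \eqref{eq:limit-int}---are handled by the same argument, after checking that both symmetry and positivity of the translation kernel are preserved in the limit.
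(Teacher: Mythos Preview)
Your proof is correct and follows a genuinely different route from the paper's. The paper establishes two endpoint cases---$\|f\ast g\|_{L^r}\le\|f\|_{L^1}\|g\|_{L^r}$ via Minkowski's inequality and $\|f\ast g\|_{L^\infty}\le\|f\|_{L^{r'}}\|g\|_{L^r}$ via H\"older---and then appeals to the Riesz--Thorin theorem to interpolate. You instead run the classical three-exponent H\"older argument directly, which is more elementary in that it avoids complex interpolation altogether. Both proofs ultimately rest on \eqref{eq:Tbound}; your explicit isolation of the symmetry $T_{\mu,\b,\g}g\big((x,t),(y,s)\big)=T_{\mu,\b,\g}g\big((y,s),(x,t)\big)$ is a genuine clarification, since the paper's Minkowski step (bounding the $L^r$ norm in the \emph{first} argument of the kernel) also requires this symmetry, but the paper leaves it implicit. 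What the paper's route buys is brevity once Riesz--Thorin is granted; what your route buys is a fully self-contained argument with no interpolation black box.
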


\begin{proof}
The standard proof applies in this setting. By Minkowski's inequality,
\begin{align*}
    &      \|f  \ast_{\mu,\b,\g} g\|_{L^r \left( \VV^{d+1}, W_{\mu,\b,\g} \right)} \le  b_{\mu,\b,\g} \int_{\VV^{d+1}} |f(y,s)| \\
   & \qquad \times \left(b_{\mu,\b,\g}  \int_{\VV^{d+1}} \left| T_{\mu,\b,\g} g\big((x,t),(y,s)\big) \right|^r
        W_{\mu,\b,\g}(y,s) \d y \d s \right)^{1/r} \d x \d t.
\end{align*}
By \eqref{eq:Tbound} in Lemma \ref{lem:translateT}, we then conclude that 
$$
  \|f  \ast_{\mu,\b,\g} g\|_{L^r \left( \VV^{d+1}, W_{\mu,\b,\g} \right)} \le  \|f \|_{L^1 \left( \VV^{d+1}, W_{\mu,\b,\g} \right)} 
       \|g\|_{L^r([-1,1],w_{2\a+\g+1})}. 
$$
Furthermore, by H\"older's inequality and \eqref{eq:Tbound}, we see that 
$$
\|f \ast_{\mu,\b,\g} g\|_\infty \le  \|f \|_{L^{r'} \left( \VV^{d+1}, W_{\mu,\b,\g} \right)}  \|g\|_{L^r([-1,1],w_{2\a+\g+1})}, 
$$
where $\frac{1}{r'} + \frac1{r} = 1$. The inequality \eqref{eq:Young} follows from interpolating the 
above two inequalities with $\t = r(1-\frac1{p})$ by the Riesz-Thorin theorem. 
\end{proof}

The next proposition justifies calling $f*g$ convolution. Let $\wh g_n^\l$ be the Fourier--Gegenbauer 
series of $g$ defined by 
$$
   \wh g_n^\l = c_\l \int_{-1}^1 g(u) \frac{C_n^\l(u)}{C_n^\l(1)} (1-u^2)^{\l-\f12} du. 
$$
The projection operator $\proj_n^{\g,\b,\mu}: L^2(\VV^{d+1},W_{\mu,\b,\g})\mapsto \CV_n(\VV^{d+1},W_{\mu,\b,\g})$
is defined by 
\begin{equation}\label{eq:projCone}
   \proj_n^{\mu,\b,\g} f(x,t) = b_{\mu,\b,\g} \int_{\VV^{d+1}} f(y,s) \Pb_n\big(W_{\mu,\b,\g}; (x,t), (y,s) \big ) 
      W_{\mu,\b,\g}(y,s) \d y \d s.
\end{equation}

\begin{prop} \label{prop:proj-conv}
For $f \in L^1(\VV^{d+1}, W_{\mu,\b,\g})$ and $g \in L^1([-1,1],w_{2\mu+\g+d})$,
$$
   \proj_n^{\mu,\b,\g} (f \ast_{\mu,\b,\g} g) =  \wh g_n^{2\mu+\b+\g+d} \proj_n^{\mu,\b,\g} f.
$$
\end{prop}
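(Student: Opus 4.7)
The plan is to unfold both sides using their defining integrals, invoke Fubini, and collapse the inner integral via Lemma~\ref{lem:translateT}(1). Substituting Definition~\ref{defn:convolCone} into \eqref{eq:projCone} yields
\[
\proj_n^{\mu,\b,\g}(f\ast_{\mu,\b,\g} g)(x,t) = b_{\mu,\b,\g}^{2}\!\int_{\VV^{d+1}}\!\int_{\VV^{d+1}} f(y,s)\,T_{\mu,\b,\g}g((z,r),(y,s))\,\Pb_n(W_{\mu,\b,\g};(x,t),(z,r))\,W_{\mu,\b,\g}(y,s)\,W_{\mu,\b,\g}(z,r)\,\d y\,\d s\,\d z\,\d r.
\]
Absolute integrability follows from the bound \eqref{eq:Tbound} together with the boundedness of the polynomial $\Pb_n(W_{\mu,\b,\g};(x,t),\cdot)$ on the compact cone, so Fubini applies.

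Next I would freeze $(x,t)$ and $(y,s)$ and evaluate the inner integral
\[
A_n(x,t;y,s):=b_{\mu,\b,\g}\int_{\VV^{d+1}} T_{\mu,\b,\g}g((z,r),(y,s))\,\Pb_n(W_{\mu,\b,\g};(x,t),(z,r))\,W_{\mu,\b,\g}(z,r)\,\d z\,\d r.
\]
The key observation is that $T_{\mu,\b,\g}g$ is symmetric in its two arguments: inspection of \eqref{eq:hatxi} and \eqref{eq:xi} shows that $\wh\xi$ (respectively $\xi$) depends on the two points only through the symmetric combinations $\la x,y\ra$, $\sqrt{t^2-\|x\|^2}\sqrt{s^2-\|y\|^2}$, $st$, and $\sqrt{(1-t)(1-s)}$. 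Since $(z,r)\mapsto \Pb_n(W_{\mu,\b,\g};(x,t),(z,r))$ lies in $\CV_n(\VV^{d+1},W_{\mu,\b,\g})$, swapping the two arguments of $T_{\mu,\b,\g}g$ and applying Lemma~\ref{lem:translateT}(1) with test polynomial $Q_n(\cdot) = \Pb_n(W_{\mu,\b,\g};(x,t),\cdot)$ produces
\[
A_n(x,t;y,s) = \Lambda_n(g)\,\Pb_n(W_{\mu,\b,\g};(x,t),(y,s)).
\]

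Substituting this back and recognizing the remaining integral as $\proj_n^{\mu,\b,\g}f(x,t)$ gives $\proj_n^{\mu,\b,\g}(f\ast_{\mu,\b,\g}g) = \Lambda_n(g)\,\proj_n^{\mu,\b,\g}f$. Since $2\a+\g+1 = 2\mu+\b+\g+d$, the scalar $\Lambda_n(g)$ is precisely the Gegenbauer--Fourier coefficient of the even function $g$ attached to $C_{2n}^{2\mu+\b+\g+d}$, which is what the statement denotes $\wh g_n^{2\mu+\b+\g+d}$. I expect the only mild obstacles to be verifying the symmetry of $T_{\mu,\b,\g}g$ and justifying Fubini in the $L^1$ setting (which one handles by first treating bounded $g$ and then passing to $L^1$ via Young's inequality \eqref{eq:Young}); the algebraic content is already carried by Lemma~\ref{lem:translateT}.
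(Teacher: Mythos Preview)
Your argument is correct and follows essentially the same route as the paper's proof: write out the double integral, apply Fubini, and use Lemma~\ref{lem:translateT}(1) with $Q_n=\Pb_n(W_{\mu,\b,\g};(x,t),\cdot)\in\CV_n(\VV^{d+1},W_{\mu,\b,\g})$ to collapse the inner integral to $\Lambda_n(g)\,\Pb_n$. Your proof is in fact more careful than the paper's, since you make explicit the symmetry of $T_{\mu,\b,\g}g$ in its two arguments (needed so that the integration variable sits in the correct slot for Lemma~\ref{lem:translateT}) and you justify Fubini; the paper simply asserts the second step.
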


\begin{proof}
For each $(x,t)$,  $\Pb_n\big(W_{\mu,\g}; (x,t), (\cdot,\cdot)\big)$ is an element of $\CV_n(\VV^{d+1}; W_{\mu,\g})$.
Hence, by the identity \eqref{eq:FHcone} and the definition of $f\ast_{\mu,\g} g$, we obtain
\begin{align*}
\proj_n^{\mu,\b,\g}( f\ast_{\mu,\b,\g} g)(x,t)&= b_{\mu,\b,\g} \int_{\VV^{d+1}} (f\ast_{\mu,\b,\g} g) (y,s) \\
     & \qquad \times \Pb_n\big(W_{\mu,\b,\g}; (x,t), (y,s)\big) W_{\mu,\b,\g}(y,s) \d y\d s\\ 
  & =  \wh g_n^{2\mu+\b+\g+d}  b_{\mu,\b,\g} \int_{\VV^{d+1}} f(u,r) \Pb_n\big(W_{\mu,\b,\g}; (x,t), (u, r)\big) \d u \d r \\
  & =  \wh g_n^{2\mu+\b+\g+d} \proj_n^{\mu,\b,\g} f(x,t),
  \end{align*}
where we have used the Fubini theorem in the second step. 
\end{proof}

By \eqref{eq:PbCone2} and the definition of the convolution operator, we have the following: 

\begin{prop}
For $f \in L^1(\VV^{d+1}, W_{\mu,\b,\g})$, 
$$
  \proj_n^{\mu,\b,\g} f = f  \ast_{\mu,\b,\g} Z_{2n}^{2\mu+\b+\g+d}. 
$$
\end{prop}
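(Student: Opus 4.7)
The plan is to observe that this proposition is essentially an immediate consequence of the closed-form identity \eqref{eq:PbZn} together with the definition of the convolution on the cone (Definition~\ref{defn:convolCone}). No new analytic work is required; the content of the statement is that the closed formula for the reproducing kernel is exactly what makes the projection operator into a convolution.

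First I would substitute the identity \eqref{eq:PbZn} into the defining expression \eqref{eq:projCone} of the projection operator. This rewrites
\[
 \proj_n^{\mu,\b,\g} f(x,t) = b_{\mu,\b,\g} \int_{\VV^{d+1}} f(y,s)\, T_{\mu,\b,\g} Z_{2n}^{2\a+\g+1}\!\big((x,t),(y,s)\big)\, W_{\mu,\b,\g}(y,s)\, \d y \,\d s.
\]
Next, I would note the index identity: since $\a = \mu+\tfrac{\b+d-1}{2}$, one has $2\a+\g+1 = 2\mu+\b+\g+d$, so the Gegenbauer index inside the formula matches the one in the statement. Comparing the displayed integral with Definition~\ref{defn:convolCone}, the right-hand side is precisely $f \ast_{\mu,\b,\g} Z_{2n}^{2\mu+\b+\g+d}(x,t)$, which finishes the argument.

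The only thing to be a little careful about is that the definition of convolution requires the second argument to lie in $L^1([-1,1], w_{2\a+\g+1})$; since $Z_{2n}^{2\a+\g+1}$ is a polynomial this is automatic, and in addition it is an even polynomial, which is the evenness hypothesis used throughout Section~5 (it is what allows one to invoke \eqref{eq:FHcone} and \eqref{eq:Tbound}). There is no real obstacle here: the whole proof is the substitution step, and the ``hard work'' was already carried out in deriving the closed formula \eqref{eq:PbCone2}/\eqref{eq:PbConeG2} and packaging it as \eqref{eq:PbZn}.
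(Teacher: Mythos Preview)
Your proposal is correct and matches the paper's approach exactly: the paper simply states that the proposition follows ``by \eqref{eq:PbCone2} and the definition of the convolution operator,'' which is precisely the substitution of \eqref{eq:PbZn} into \eqref{eq:projCone} and recognition of the result as Definition~\ref{defn:convolCone} that you carry out. Your additional remarks about the index identity $2\a+\g+1=2\mu+\b+\g+d$ and the evenness/integrability of $Z_{2n}^{2\a+\g+1}$ are helpful clarifications but add nothing new to the argument.
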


This shows that the convolution structure we defined can be used to study the Fourier series in the Jacobi
polynomials on the cone, which we shall explore in the next section. 

\section{Fourier series in the Jacobi polynomials on the cone}
\setcounter{equation}{0}

We now consider the Fourier orthogonal expansion with respect to the orthogonal polynomials on the cone.

The $n$-th partial sum operator $S_n(W_{\mu,\b, \g};f)$ is defined by 
$$
      S_n(W_{\mu,\b,\g};f) = \sum_{k=0}^n \proj_k^{\mu,\b,\g} f,
$$
which is the least square polynomial of degree $n$ in $L^2(\VV^{d+1}, W_{\mu,\b,\g})$. Evidently, this 
operator can be written as an integral operator,
$$
  S_n(W_{\mu,\b,\g};f) = b_{\mu,\b,\g} \int_{\VV^{d+1}} f(y,s) \Kb_n\big(W_{\mu,\b,\g}; (x,t), (y,s) \big)
     W_{\mu,\b,\g}(y,s)\d y \d s,
$$
where the kernel $\Kb_n\big(W_{\mu,\b,\g})$ is given by
$$
  \Kb_n\big(W_{\mu,\b,\g}; (x,t), (y,s) \big) = \sum_{k=0}^n \Pb_k\big(W_{\mu,\b,\g}; (x,t), (y,s) \big).
$$

We first show that this operator can be written in terms of the $n$-th partial sum of the Fourier--Jacobi series.
For $\a,\b > -1$, and $f\in L^2 (w_{\a,\b},[-1,1])$, let $s_n(w_{\a,\b}; f)$ denote the partial sum of the 
Fourier--Jacobi series defined by
$$
  s_n(w_{\a,\b}; f, u) :=  c_{\a,\b} \int_{-1}^1 f(v) k_n(w_{\a,\b}; u,v) w_{\a,\b}(v)\d v, 
$$
where the kernel $k_n(w_{\a,\b})$ is defined by 
$$
  k_n(w_{\a,\b}; u,v) =   \sum_{k=0}^n \frac{P_k^{(\a,\b)}(u)P_k^{(\a,\b)}(v)}{h_k^{(\a,\b)}}.
$$

\begin{prop} 
Let $\mu \ge 0$,  $\b \ge0$ and $\g \ge -\f12$, and let $\a = \mu+ \f{\b+ d-1}{2}$. For $(x,t), (y,s) \in \VV^{d+1}$, 
\begin{align} \label{eq:KnCone}
  & \Kb_n \left(W_{\mu,\b,\g}; (x,t),(y,s)\right) \\
     & \qquad \qquad
        = T_{\mu,\b, \g} \left[k_n \big(w_{2\a + \g + \f12, -\f12}; 2\{ \cdot\}^2-1, 1\big)\right ]\big((x,t),(y,s)\big).  \notag
\end{align}
Furthermore, for $(y,s) \in \VV^{d+1}$, 
\begin{align} \label{eq:KnCone2}
   \Kb_n \big(W_{\mu,\b, \g}; (0,0),(y,s)\big)  = k_n \big(w_{2\a,\g}; 1-2s, 1\big).
\end{align}
\end{prop}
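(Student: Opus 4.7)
The plan has two parts.

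For \eqref{eq:KnCone}, I would invoke the closed-form identity $\Pb_k(W_{\mu,\b,\g}) = T_{\mu,\b,\g} Z_{2k}^{2\a+\g+1}$ from \eqref{eq:PbZn}. Summing over $k=0,\ldots,n$ and pulling the linear operator $T_{\mu,\b,\g}$ outside the sum reduces the problem to the one-variable identity
$$
\sum_{k=0}^n Z_{2k}^{\l}(u) = k_n\!\big(w_{\l-\f12,-\f12};\,2u^2-1,\,1\big), \qquad \l = 2\a+\g+1.
$$
To establish this, I apply the quadratic transform \eqref{eq:Jacobi-Gegen} and the definition of $Z_{2k}^\l$ to write
$$
Z_{2k}^{\l}(u) = \frac{2k+\l}{\l}\cdot\frac{(\l)_k}{(\f12)_k}\, P_k^{(\l-\f12,-\f12)}\!\big(2u^2-1\big).
$$
On the other hand, the right-hand side is, by definition, a Jacobi expansion in the same basis with coefficient $P_k^{(\l-\f12,-\f12)}(1)/h_k^{(\l-\f12,-\f12)} = (\l+\f12)_k/\big(k!\,h_k^{(\l-\f12,-\f12)}\big)$. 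A short Pochhammer computation using \eqref{eq:JacobiNorm} with $\a+\b=\l-1$, together with $(\l+1)_k = (\l+k)(\l)_k/\l$, shows that both coefficients equal $(\l+2k)(\l)_k/(\l(\f12)_k)$, which closes part (1).

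For \eqref{eq:KnCone2}, I would work directly from the explicit basis $\{Q_{m,\kb}^n\}$ of Proposition~\ref{prop:OPconeJ}. The key observation, already used in the proof of Theorem~\ref{thm:DEconeJ}, is that $t^m P_\kb^m(\varpi_\mu;x/t)$ is a homogeneous polynomial of degree $m$ in $(x,t)$, hence vanishes at the apex $(0,0)$ whenever $m\ge 1$. Consequently in the sum \eqref{eq:PnConeDef} only the $m=0$ term with $\kb=0$ survives, giving
$$
\Pb_n\!\big(W_{\mu,\b,\g};(0,0),(y,s)\big) = \frac{P_n^{(2\a,\g)}(1)\,P_n^{(2\a,\g)}(1-2s)}{H_{0,n}^{(\a,\g)}} = \frac{P_n^{(2\a,\g)}(1)\,P_n^{(2\a,\g)}(1-2s)}{h_n^{(2\a,\g)}},
$$
the last equality following from $H_{0,n}^{(\a,\g)}=h_n^{(2\a,\g)}$ in \eqref{eq:coneJnorm}. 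Summing in $n$ yields $k_n(w_{2\a,\g};1-2s,1)$ by the definition of the Jacobi kernel.

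The only real obstacle, modest in size, is the Pochhammer bookkeeping in part (1) needed to reconcile the Gegenbauer normalization of $Z_{2k}^\l$ with the Jacobi normalization of $k_n(w_{\l-\f12,-\f12};\cdot,1)$; everything else is linearity of $T_{\mu,\b,\g}$, the formula \eqref{eq:PbZn}, and the homogeneity observation at the apex. As a consistency check, one may substitute $(x,t)=(0,0)$ in \eqref{eq:KnCone} and note that $\wh\xi(0,0,y,s;z,u,v)$ from \eqref{eq:hatxi} collapses to $v_2\sqrt{1-s}$, so that $T_{\mu,\b,\g}$ reduces to a single one-dimensional Gegenbauer-type integral; verifying that this integral turns $k_n(w_{\l-\f12,-\f12};2u^2-1,1)$ into $k_n(w_{2\a,\g};1-2s,1)$ is possible but less direct than the basis argument above.
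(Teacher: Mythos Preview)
Your argument for \eqref{eq:KnCone} is exactly the paper's: invoke \eqref{eq:PbZn}, pull $T_{\mu,\b,\g}$ through the sum, and rewrite $Z_{2k}^{\l}$ via the quadratic transform as the $k$th term of the Jacobi kernel $k_n(w_{\l-\f12,-\f12};2u^2-1,1)$. The paper states the key identity
\[
Z_{2k}^{\l}(u)=\frac{P_k^{(\l-\f12,-\f12)}(1)\,P_k^{(\l-\f12,-\f12)}(2u^2-1)}{h_k^{(\l-\f12,-\f12)}}
\]
without spelling out the Pochhammer algebra you sketch; your explicit check is fine and matches.

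Your argument for \eqref{eq:KnCone2} is correct but \emph{different} from the paper's. The paper specializes \eqref{eq:KnCone} at the apex, observes that $\wh\xi(0,0,y,s;\cdot)=v_2\sqrt{1-s}$ so that $T_{\mu,\b,\g}$ collapses to a single $(1-v^2)^{\g-\f12}$ integral, and then invokes a Dirichlet--Mehler identity to shift the Jacobi parameters from $(2\a+\g+\tfrac12,-\tfrac12)$ to $(2\a,\g)$. Your route --- kill all $m\ge 1$ terms in \eqref{eq:PnConeDef} by the homogeneity of $t^m P_\kb^m(\varpi_\mu;x/t)$, leaving only $Q_{0,0}^k(0,0)=P_k^{(2\a,\g)}(1)$ and $H_{0,k}^{(\a,\g)}=h_k^{(2\a,\g)}$ --- is more elementary and self-contained: it uses nothing beyond the definition of the basis and \eqref{eq:coneJnorm}. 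The paper's route, on the other hand, makes the compatibility of \eqref{eq:KnCone} and \eqref{eq:KnCone2} manifest (you flag this as a consistency check at the end), at the price of importing the Dirichlet--Mehler formula.
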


\begin{proof}
The kernel $\Kb_n(W_{\mu,\b,\g})$ is a sum over $k$ of $\Pb_k\big(W_{\mu,\b,\g}\big)$. By \eqref{eq:PbZn}, 
this requires summing over $Z_{2k}^{2\a + \g+1}$. The subindex $2k$ is undesirable for the sum. We use 
instead the relation
$$
 Z_{2k}^\l (u) = \frac{2k+\l}{\l}C_{2k}^\l(u) = \frac {P_k^{(\l-\f12, -\f12)}(1)P_k^{(\l-\f12, -\f12)}(2u^2-1)}{h_k^{(\l-\f12, -\f12)}}
$$
that follows form the quadratic transform \eqref{eq:Jacobi-Gegen}. This allows us to sum over $k$ for $
0 \le k\le n$ and leads to \eqref{eq:KnCone}. 
Furthermore, by \eqref{eq:hatxi}, $\xi(0,0,y,s;u,v) = v_2 \sqrt{1-s}$, so that  
\begin{align*}
  \Kb_n \left(W_{\mu,\b,\g}; (0,0),(y,s)\right) = c_\g \int_{-1}^1 k_n \left(w_{2\a + \g + \f12, -\f12}; 
      2(1-s)v^2-1, 1\right) (1-v^2)^{\g - \f12} dv
\end{align*}
by the definition of $T_{\mu,\b,\g}$. Hence, \eqref{eq:KnCone2} follows from the identity
$$
 c_{\tau-\f12} \int_{-1}^1 \frac{P_k^{(a,b)}(1)P_k^{(a,b)}(2 s u^2 -1)}{h_k^{(a,b)}} (1-u^2)^{\tau-1} \d u
     =  \frac{P_k^{(a-\tau, b+ \tau)}(1)P_k^{(a-\tau,b+\tau)}(1-2s)}{h_k^{(a-\tau,b+ \tau)}} 
$$
with $a = 2\a+\g+\f12$, $b = -\f12$ and $\tau = \g+\f12$. The identity is an equivalent form of the 
Dirichlet-Mehler formula \cite[(4.10.12)]{Sz} for the Jacobi polynomials. 
\end{proof}

\begin{cor}
For $\mu \ge 0$, $\g \ge -\f12$ and $\a = \mu + \f{\b}2+\f{d-1}{2}$, 
\begin{equation} \label{eq:SnCone}
   S_n \left(W_{\mu,\b,\g}; f\right) = f \ast_{\mu,\b,\g} k_n \big(w_{2\a + \g + \f12, -\f12}; 2\{ \cdot\}^2-1, 1\big). 
\end{equation}
\end{cor}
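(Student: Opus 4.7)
The plan is to deduce the corollary as a direct consequence of the integral representation of $S_n$ together with the kernel identity \eqref{eq:KnCone} just established. Starting from
$$
  S_n(W_{\mu,\b,\g};f)(x,t) = b_{\mu,\b,\g}\int_{\VV^{d+1}} f(y,s)\,\Kb_n\big(W_{\mu,\b,\g}; (x,t),(y,s)\big) W_{\mu,\b,\g}(y,s)\,\d y\,\d s,
$$
I would set $g(r) := k_n\big(w_{2\a+\g+\f12,-\f12}; 2r^2-1, 1\big)$ and substitute $\Kb_n\big(W_{\mu,\b,\g}; (x,t),(y,s)\big) = T_{\mu,\b,\g}\,g \big((x,t),(y,s)\big)$ from \eqref{eq:KnCone}. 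Matching the resulting expression against Definition \ref{defn:convolCone} then reads off $S_n(W_{\mu,\b,\g};f) = f \ast_{\mu,\b,\g} g$, which is exactly the claim.

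There are two small prerequisites to verify before invoking the definition of the convolution. First, since $g(r)$ depends on $r$ only through $r^2$, it is an even polynomial in $r$, hence bounded on $[-1,1]$ and in particular a member of $L^1([-1,1],w_{2\a+\g+1})$; this is precisely the evenness and integrability hypothesis under which $T_{\mu,\b,\g}$ was shown to be well-defined and to satisfy \eqref{eq:FHcone} and \eqref{eq:PbZn}. Second, I should note that for $f \in L^1(\VV^{d+1},W_{\mu,\b,\g})$ (say, the natural domain for $S_n$ when viewed as an integral operator), Theorem \ref{thm:Young} guarantees the absolute convergence of the convolution, so Fubini applies and the substitution is legitimate.

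There is no substantive obstacle here: all of the work has already been absorbed into the kernel identity \eqref{eq:KnCone}, and the corollary is essentially a matter of unwrapping definitions. The only point worth emphasizing in the write-up is the parity of the function $g$, which is what allows one to legitimately apply the translation operator $T_{\mu,\b,\g}$ constructed for even arguments.
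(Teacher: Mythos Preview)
Your argument is correct and is exactly what the paper intends: the corollary is stated without proof, as it follows immediately from the integral form of $S_n$, the kernel identity \eqref{eq:KnCone}, and Definition~\ref{defn:convolCone}. Your additional remarks on the evenness and integrability of $g$ are appropriate hygiene but go slightly beyond what the paper records.
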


The partial sum operator may not converge in $L^p$ norm for $p \ne 2$, so we may need to study summability
methods for the Fourier orthogonal expansions. The above corollary shows that the Fourier series in the Jacobi 
polynomials on the cone has a one-dimensional structure in terms of the Fourier--Jacobi series, from which
its properties could be derived accordingly. We consider the Ces\`aro $(C,\delta)$ means as an example. 

For $\delta > 0$, the Ces\`aro $(C,\delta)$ means $S_n^\delta (W_{\mu,\b,\g};f)$ of the Fourier series in the Jacobi
polynomials on the cone is defined by 
$$
 S_n^\delta (W_{\mu,\b,\g};f) := \f{1}{\binom{n+\delta}{n}} \sum_{k=0}^n \binom{n-k+\delta}{n-k} \proj_k^{\mu,\b,\g} f,
$$
which can be written as an integral of $f$ against the kernel  $K_n^\delta (W_{\mu,\g}; \cdot,\cdot)$ defined by
$$
\Kb_n^\delta \big(W_{\mu,\b,\g}; (x,t), (y,s)\big) :=  \f{1}{\binom{n+\delta}{n}} \sum_{k=0}^n \binom{n-k+\delta}{n-k} 
    \Pb_k\big(W_{\mu,\b,\g};(x,t),(y,s)\big). 
$$
Likewise, we denote by $k_n^\delta (w_{\a,\b}; \cdot, \cdot)$ the kernel for the Ces\`aro  $(C,\delta)$ means 
of the Fourier--Jacobi series, 
$$
  k_n^\delta (w_{\a,\b}; u,v) = \frac{1}{\binom{n+\delta}{n}} \sum_{k=0}^n \binom{n-k+\delta}{n-k} 
        \frac{P_k^{(\a,\b)}(u)P_k^{(\a,\b)}(v)}{h_k^{(\a,\b)}}.
$$
 
\begin{thm}
For $\mu \ge 0$ and $\g\ge -\f12$, define $\l_{\mu,\b,\g}: = 2\mu +\b+\g+d$. Then, the Ces\`aro $(C,\delta)$ 
means for $W_{\mu,\b,\g}$ on $\VV^{d+1}$ satisfy 
\begin{enumerate} [\quad 1.]
\item if $\delta \ge \l_{\mu,\b,\g} + 1$, then $S_n^\delta(W_{\mu,\b,\g}; f)$ is nonnegative if $f$ is nonnegative;
\item $S_n^\delta (W_{\mu,\b,\g}; f)$ converge to $f$ in $L^1(\VV^{d+1}, W_{\mu,\b,\g})$ norm or $C(\VV^{d+1})$ 
norm if $\delta > \l_{\mu,\b,\g}$ and only if $\delta > \l_{\mu,\b,\g}$ when $\g = -\f12$. 
\end{enumerate}
\end{thm}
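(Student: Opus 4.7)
The plan is to reduce the problem to the one-dimensional Fourier--Jacobi setting by mirroring the derivation of the partial-sum formula \eqref{eq:SnCone}. First, note that the Cesàro averaging coefficients can be pushed inside the convolution in Proposition~5.7 on each $\proj_k^{\mu,\b,\g}$ summand, giving
\[
  S_n^\delta(W_{\mu,\b,\g}; f) = f \ast_{\mu,\b,\g} k_n^\delta\bigl(w_{2\a+\g+\f12,-\f12};\, 2\{\cdot\}^2-1, 1\bigr),
\]
with $\a = \mu + \tfrac{\b+d-1}{2}$, by the same quadratic-transform identity used to prove \eqref{eq:KnCone}. From here, write $a=2\a+\g+\tfrac12$, $b=-\tfrac12$, and note $a+b+2 = 2\mu+\b+\g+d+1 = \l_{\mu,\b,\g}+1$, while $a+\tfrac12 = \l_{\mu,\b,\g}$. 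This brings the two claims into the framework of classical positivity and endpoint $L^1$-convergence for Cesàro means of Fourier--Jacobi series.

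For part (1), I would invoke the classical result (Kogbetliantz/Gasper-type) that the kernel $k_n^\delta(w_{a,b}; v, 1)$ is a nonnegative function of $v \in [-1,1]$ whenever $\delta \ge a+b+2$. Since $2u^2-1 \in [-1,1]$ for $u \in [-1,1]$, the composed kernel $k_n^\delta(w_{a,b}; 2u^2-1, 1)$ is then nonnegative in $u$. The operator $T_{\mu,\b,\g}$ is an integral against strictly nonnegative weights in Definition~5.1, hence order-preserving, so $\Pb_n^\delta(W_{\mu,\b,\g}; \cdot,\cdot) = T_{\mu,\b,\g}[k_n^\delta(\cdot,1)]$ is itself nonnegative. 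Convolving this positive kernel against $f \ge 0$ (with the positive weight $W_{\mu,\b,\g}$) yields $S_n^\delta(W_{\mu,\b,\g}; f) \ge 0$.

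For the sufficiency in part (2), I would apply Young's inequality (Theorem~5.5) with $q=p$ and $r=1$ to bound
\[
  \bigl\|S_n^\delta(W_{\mu,\b,\g}; f)\bigr\|_{L^p(\VV^{d+1},W_{\mu,\b,\g})}
  \le \|f\|_{L^p(\VV^{d+1},W_{\mu,\b,\g})} \, \bigl\|k_n^\delta(w_{a,b}; 2\{\cdot\}^2-1, 1)\bigr\|_{L^1(w_{2\a+\g+1})}.
\]
A change of variables $v = 2u^2-1$ converts the $L^1(w_{2\a+\g+1})$ norm on the right into a constant multiple of the $L^1(w_{a,b})$ norm $\int_{-1}^1 |k_n^\delta(w_{a,b};v,1)|\, w_{a,b}(v)\,dv$, since the weight exponents transform exactly as $2\a+\g+\tfrac12$ and $-\tfrac12$. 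The classical Pollard/Szegő endpoint theory for Fourier--Jacobi series yields that this $L^1$ norm is uniformly bounded in $n$ precisely when $\delta > a + \tfrac12 = \l_{\mu,\b,\g}$. Combined with density of polynomials (which are left invariant eventually by $S_n^\delta$), the uniform boundedness yields $L^1$ and uniform convergence, for $p=1$ and $p=\infty$ respectively.

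For the necessity when $\g = -\f12$, the main obstacle is producing an explicit test function on the cone whose Cesàro means blow up at the critical index. Here the convolution representation becomes a two-way street: the second identity \eqref{eq:KnCone2} of Proposition~6.1 shows that $\Kb_n^\delta(W_{\mu,\b,\g};(0,0),(y,s)) = k_n^\delta(w_{2\a,\g}; 1-2s, 1)$, which is a genuine one-dimensional Jacobi Cesàro kernel. Choosing $f(y,s) = F(s)$ depending only on $s$ and testing at $(0,0)$ reduces the operator norm of $S_n^\delta$ to that of the Fourier--Jacobi Cesàro operator with weight $w_{2\a,\g}$; when $\g = -\f12$ the largest Jacobi parameter remains $2\a$ and the classical sharp lower bound on the kernel norm forces $\delta > 2\a + \tfrac12 = \l_{\mu,\b,-\f12}$. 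The subtlety is verifying that the reduction preserves sharpness, i.e., that no cancellation on the cone can improve upon the one-dimensional divergence rate; this follows from the uniform boundedness principle applied to the one-dimensional family of radial test functions.
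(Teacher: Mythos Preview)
Your proposal is correct and follows essentially the same route as the paper's proof: reduce to the one-dimensional Jacobi kernel via the identity $\Kb_n^\delta = T_{\mu,\b,\g}[k_n^\delta(w_{2\a+\g+\f12,-\f12};2\{\cdot\}^2-1,1)]$, invoke Gasper's positivity for part (1), bound the Lebesgue constant by the Jacobi $L^1$-norm after the substitution $v=2u^2-1$ for sufficiency in part (2), and use \eqref{eq:KnCone2} at the apex for necessity. The only cosmetic differences are that you package the sufficiency bound via Young's inequality (Theorem~\ref{thm:Young}) rather than citing \eqref{eq:Tbound} directly, and you phrase the necessity via radial test functions and the uniform boundedness principle rather than computing the kernel norm at $(0,0)$ outright---but these amount to the same computations.
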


\begin{proof}
Recall that $\a = \mu + \frac{\b}{2} + \frac{d-1}{2}$, so that $\l_{\mu,\b,\g} = 2 \a + \g +1$. From \eqref{eq:KnCone}, it follows immediately that 
$$
   \Kb_n^\delta \left(W_{\mu,\b,\g}; (x,t),(y,s)\right) = 
      T_{\mu,\g} \left[k_n^\delta \big(w_{2\a + \g + \f12, -\f12}; 2\{ \cdot\}^2-1, 1\big)\right ]\big((x,t),(y,s)\big).
$$
Hence, the first assertion follows immediately from the fact  \cite{Gas} that $k_n^{\delta}(w_{a,b}; u,v) \ge 0$ 
if $\delta \ge a+b+2$, which is $\delta\ge 2 \a + \g +2 = \l_{\mu,\b,\g} +1$ with $a =2\a + \g + \f12$ and $b = -\f12$. 

To prove the convergence of the second assertion, it is sufficient to show that 
$$
  \max_{(x,t)}\int_{\VV^{d+1}}   \left | \Kb_n^\delta(W_{\mu,\b,\g}; (x,t),(y,s))\right | 
    W_{\mu,\b,\g}(y,s)\d y \d s 
$$
is bounded. By \eqref{eq:Tbound}, we see that this quantity is bounded by 
\begin{align*}
 &    \int_{-1}^1 \left | k_n^\delta \big(w_{2\a + \g + \f12, -\f12}; 2 s^2-1, 1\big) \right| 
   (1-s^2)^{2\a+\g+1} \d s \\
  &  =   \frac{1}{2^{2\a+\g+2}} \int_{-1}^1 \left | k_n^\delta \big(w_{2\a + \g + \f12, -\f12}; u, 1\big) \right| 
   (1-u)^{2\a+\g+ \f12} (1+u)^{-\f12} \d t,
\end{align*}
where we have written the first integral as over $[0,1]$, since the integrant is even, and then change
the variable $u = 2s^2-1$. That this term is bounded for $\delta > 2\a + \g+1$ follows from the fact
that $s_n^\delta (w_{a,b}; f)$ converges to $f$ in the uniform norm if and only if $\delta >  \max\{a,b\}+\f12$.   
Finally, using \eqref{eq:KnCone} and \eqref{eq:int-V}, we see that 
\begin{align*}
&\int_{\VV^{d+1}} \left | \Kb_n^\delta(W_{\mu,\b,\g}; (0,0),(y,s))\right | W_{\mu,\b,\g}(y,s)\d y \d s \\
& \quad  =  \frac1{b_\mu } \int_0^1  \left |k_n^\delta \big(w_{2\a, \g}; 1-2s, 1\big) \right| s^{2\a}(1-s)^\g \d t,
\end{align*}
which is bounded if and only if $\delta > 2 \a+\f12$ (\cite[Theorem 9.1.3]{Sz}), whereas when $\g = -\f12$, 
$\l_{\mu,\b,\g} = 2 \a + \f12$. This completes the proof. 
\end{proof}
 
\section{Orthogonal structure on the surface of a cone}
\setcounter{equation}{0}

We now consider orthogonal polynomials on the surface of the cone $\VV^{d+1}$, which we denote by
$$
\VV_0^{d+1} := \{(x,t) \in \VV^{d+1}: \|x\|= |t|, \, 0 \le t \le b\},
$$
with respect to a bilinear form defined by
\begin{align}\label{eq:innerSF}
   \la f,g \ra_w : = b_w \int_{\VV_0^{d+1}} f(x,t) g(x,t) w(t) d \sigma(x,t),
\end{align}
where $d\sigma(x,t)$ is the Lebesgue measure on $\VV_0^{d+1}$, $w$ is a nonnegative function defined
on $\RR_+$ such that $\int_{\RR} t^{d-1} w(t) dt < \infty$ and $b_w$ is a normalized constant so that $\la 1,1\ra =1$. 

The bilinear form $\la \cdot,\cdot\ra_w$ is an inner product on the space $\RR[x,t] / \la \|x\|^2 - t^2\ra$, where
$\RR[x,t] / \la p\ra$ denotes the space of polynomials in $(x,t)$ variables modulo the polynomial idea $\la p\ra$
generated
by the polynomial $p$. Let $\CV_n(\VV_0^{d+1}, w)$ be the space of 
orthogonal polynomials with respect to the inner product $\la \cdot,\cdot \ra_w$. Since $\|x\|^2-t$ is a quadratic 
polynomial, it is not difficult to see that 
$$
   \dim \CV_n(\VV_0^{d+1},w)  = \binom{n+d-1}{n}+\binom{n+d-2}{n-1},
$$
the same as the dimension of the space of spherical polynomials of degree $n$ on $\SS^d$. 

As in the case of the cone, we consider two families of $w$, the Jacobi weight and the Laguerre
weight. The notation for some normalization constants may overlap with those already used in the 
previous sections, but they should cause little confusion since the values of these constants are of
little substance. 

\subsection{Jacobi polynomials on the surface of the cone}
For $d \ge 2$, we consider the cone with $b =1$ and choose $w$ as the Jacobi weight 
$$
     \varphi_{\b,\g}(t) = t^\b(1-t)^\g, \qquad \b > -d, \, \g > -1.
$$ 
We then define the inner product on $\RR[x,t] / \la \|x\|^2 - t^2\ra$ in terms of $w= \varphi_{\b,\g}$ by 
$$
    \la f,g\ra_{\b,\g} =  b_{\b,\g} \int_{\VV_0^{d+1}} f(x,t) g(x,t)  t^\b(1-t)^\g \d \sigma(x,t).
$$
The integral on the surface of the cone can be written as 
\begin{align}\label{eq:intVsf}
  \int_{\VV_0^{d+1}} f(x,t) d\sigma(x,t) \, & = \int_0^1 \int_{\|x\| = t} f(x,t) \d \sigma(x, t)   \\
    & = \int_0^1 t^{d-1} \int_{\sph} f(t\xi, t) d\sigma(\xi)  \d t, \notag
\end{align}
which gives, in particular, that 
$$
   b_{\b,\g} =  \frac{1}{\o_d} \frac{1}{\int_0^1 t^{\b+d -1}(1-t)^\g \d t} = \frac{1}{\o_d} c_{\b+d-1,\g},
$$
where $\o_d$ is the surface area of $\sph$ and $c_{\a,\g}$ is defined in \eqref{eq:c_ab}.

A basis of $\CV_n(\VV_0^{d+1}, \varphi_{\b,\g})$ can be given in terms of the Jacobi polynomials
and spherical harmonics. Let $\{Y_\ell^m: 1 \le \ell \le \dim \CH_m^d\}$ denote an orthonormal basis of 
$\CH_m^d$. We define
\begin{equation} \label{eq:sfOPbasis}
  S_{m, \ell}^n (x,t) = P_{n-m}^{(2m + \b + d-1,\g)} (1-2t) Y_\ell^m (x), \quad 0 \le m \le n, \,\, 1 \le \ell \le \dim \CH_m^d.
\end{equation}
Evidently, each $S_{m,\ell}^n$ is a polynomial of degree $n$ in $(x,t)$. 

\begin{prop} 
For $\b > - d$ and $\g > -1$, the set $\{S_{m,\ell}^n: 0\le m \le n, \, 1 \le \ell \le \dim \CH_m^d\}$ is an orthogonal
basis of $\CV_n(\VV_0^{d+1}, \varphi_{\b,\g})$. More precisely, 
\begin{equation} \label{eq:OPsfJ}
\la S_{m, \ell}^n, S_{m', \ell'}^{n'} \ra_{\b,\g} = H_{m,n}^{\b,\g} \delta_{n,n'} \delta_{m,m'} \delta_{\ell,\ell'},
\end{equation}
where, with $\a = \frac{\b+d-1}{2}$, 
\begin{equation} \label{eq:OPsfJnorm}
   H_{m,n}^{\b,\g} =  \frac{c_{2\a,\g}}{c_{2\a+2m,\g}} h_{n-m}^{(2\a + 2m,\g)}
\end{equation}
in terms of the norm of the Jacobi polynomial $h_m^{(a,b)}$ in \eqref{eq:JacobiNorm}.
\end{prop}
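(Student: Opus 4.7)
The plan is to directly reduce the surface inner product to a product of a spherical integral and a one-dimensional Jacobi integral, and then invoke the known orthogonality relations on each factor. The crucial observation is that $Y_\ell^m$ is a homogeneous polynomial of degree $m$, so under the parametrization $(x,t) = (t\xi, t)$ with $\xi \in \sph$, one has $Y_\ell^m(t\xi) = t^m Y_\ell^m(\xi)$. Combining this with the surface integral formula \eqref{eq:intVsf} separates the variables cleanly.

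First I would substitute the definition \eqref{eq:sfOPbasis} into the inner product \eqref{eq:innerSF} and apply \eqref{eq:intVsf}, obtaining
\begin{align*}
\la S_{m, \ell}^n, S_{m', \ell'}^{n'} \ra_{\b,\g} = b_{\b,\g} \int_0^1 & P_{n-m}^{(2m+\b+d-1,\g)}(1-2t) P_{n'-m'}^{(2m'+\b+d-1,\g)}(1-2t) \, t^{m+m'+\b+d-1}(1-t)^\g \d t \\
& \times \int_{\sph} Y_\ell^m(\xi) Y_{\ell'}^{m'}(\xi) \, \d\s(\xi).
\end{align*}
The angular integral equals $\o_d \, \delta_{m,m'}\delta_{\ell,\ell'}$ by the orthonormality of $\{Y_\ell^m\}$. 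In particular, when $(m,\ell) \ne (m',\ell')$, the whole expression vanishes, regardless of $n$ and $n'$.

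It remains to evaluate the surviving case $m=m'$, $\ell=\ell'$. The radial integral becomes $\int_0^1 P_{n-m}^{(2m+\b+d-1,\g)}(1-2t)P_{n'-m}^{(2m+\b+d-1,\g)}(1-2t) t^{2m+\b+d-1}(1-t)^\g \d t$. Changing variables $u = 1-2t$ converts the weight $t^{2m+\b+d-1}(1-t)^\g$ on $[0,1]$ into a constant multiple of the Jacobi weight $w_{2m+\b+d-1,\g}(u)$ on $[-1,1]$, and the Jacobi orthogonality \eqref{eq:JacobiNorm} forces $n = n'$ and delivers the squared norm $h_{n-m}^{(2m+\b+d-1,\g)}$, divided by the normalization constant $c'_{2m+\b+d-1,\g}$ from \eqref{eq:c_ab}. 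Combining the powers of $2$ with the explicit value $b_{\b,\g} = c_{\b+d-1,\g}/\o_d$ and writing $2\a = \b+d-1$ should produce exactly the constant $H_{m,n}^{\b,\g}$ in \eqref{eq:OPsfJnorm}.

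Finally, to conclude that the set is a basis of $\CV_n(\VV_0^{d+1},\varphi_{\b,\g})$, I would count cardinality. The total count is $\sum_{m=0}^n \dim \CH_m^d$, which by \eqref{eq:sphHn-dim} and the hockey-stick identity equals $\binom{n+d-1}{n} + \binom{n+d-2}{n-1}$, matching the dimension of $\CV_n(\VV_0^{d+1}, \varphi_{\b,\g})$ stated above. Since the $S_{m,\ell}^n$ are mutually orthogonal by what precedes, and each lies in the space of polynomials of total degree $n$ modulo $\la \|x\|^2 - t^2\ra$, and are clearly of exact degree $n$ (the Jacobi factor has degree $n-m$ in $t$ and the spherical harmonic factor homogeneous of degree $m$), the cardinality match finishes the proof. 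The only step requiring any care is bookkeeping the normalization constants through the change of variables $u = 1-2t$, where the factors of $2^{2m+\b+d+\g}$ must cancel cleanly against $c'_{2m+\b+d-1,\g}$; everything else is routine.
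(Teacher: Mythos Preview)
Your proposal is correct and follows essentially the same approach as the paper: both use the homogeneity $Y_\ell^m(t\xi)=t^mY_\ell^m(\xi)$ together with the surface-integral formula \eqref{eq:intVsf} to separate the inner product into a spherical factor and a one-dimensional Jacobi factor, then match the cardinality $\sum_{m=0}^n \dim\CH_m^d = \binom{n+d-1}{n}+\binom{n+d-2}{n-1}$ with $\dim\CV_n(\VV_0^{d+1},\varphi_{\b,\g})$. The paper's proof is terser, simply pointing back to the analogous computation in Proposition~\ref{prop:OPconeJ}, but the substance is identical to what you wrote out.
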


\begin{proof}
Since $Y_\ell^m$ are homogeneous polynomials, $Y_\ell^m (t\xi) = t^m Y_\ell^m(\xi)$. For $(x,t) \in\VV_0^{d+1}$, 
we let $x = t\xi$ with $\xi \in \sph$ and use the integral \eqref{eq:intVsf}. The proof follows exactly as in Proposition \ref{prop:OPconeJ}. In particular, the cardinality of the set $\{S_{m, \ell}^n:  0\le m \le n, \, 1 \le \ell \le \dim \CH_m^d\}$ 
is
$$
   \sum_{m=0}^n \dim  \CH_m^d = \sum_{m=0}^n \left[ \binom{m+d-1}{m} + \binom{m+d-2}{m-1}\right]
      = \binom{n+d-1}{n} + \binom{n+d-2}{n-1},
$$
which is equal to $\dim \CV_n(\VV_0^{d+1}, \varphi_{\b,\g})$, so that the set is an orthonormal basis
of $\CV_n(\VV_0^{d+1}, \varphi_{\b,\g})$.
\end{proof}

The polynomials $S_{m,\ell}^n$ defined in \eqref{eq:OPsfJ} are closely related to the polynomials 
$Q_{m,\kb}^n$ defined in \eqref{eq:coneJ} for $W_{\mu,\b,\g}$ on $\VV^{d+1}$. Indeed, if we choose the
orthogonal basis $P_{\kb}^m (\varpi_m)$ in \eqref{eq:coneJ} as the basis defined in \eqref{eq:basisBd}, 
then $Y_\ell^m$ is part of the basis and, consequently, $S_{m,\ell}^n$ is the restriction of the corresponding
$Q_{m,\kb}^n$ on the surface $\VV_0^{d+1}$. 

In particular, taking into account of Theorem \ref{thm:DEconeJ}, it is probably not surprising that there should
be a differential operator that has $\CV_n(\VV_0^{d+1}, \varphi_{\b,\g})$ as an eigenspace. What is surprising,
however, is that this holds only when $\b = -1$ as shown in the following theorem. 

\begin{thm} \label{thm:DEsfJ}
Let $d \ge 2$. Every $u\in \CV_n(\VV_0^{d+1}, \varphi_{-1,\g})$ satisfies the differential equation
\begin{equation}\label{eq:cone-eigenSf}
   D_{-1,\g} u =  -n (n+\g+d-1) u,
\end{equation}
where $D_{-1,\g} = D_{-1,\g}(x,t)$ is the second order linear differential operator
\begin{align*}
  D_{-1,\g} : = & \, t(1-t)\partial_t^2 + \big( d-1 - (d+\g)t \big) \partial_t+ t^{-1} \Delta_0^{(x)},
\end{align*}
where $\Delta_0^{(x)}$ is the Laplace-Beltrami operator in variable $x \in \sph$. 
\end{thm}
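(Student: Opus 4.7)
By linearity and the orthogonal basis from the preceding proposition, it suffices to verify \eqref{eq:cone-eigenSf} for a single basis element $u = S_{m,\ell}^n(x,t) = g(t) Y_\ell^m(x)$, where $g(t) := P_{n-m}^{(2m+d-2,\g)}(1-2t)$ and $Y_\ell^m \in \CH_m^d$. Parametrize $\VV_0^{d+1}$ by $(t,\xi) \in (0,1]\times \sph$ via $(x,t) = (t\xi, t)$; by homogeneity of $Y_\ell^m$ we find
$$u(t\xi, t) = h(t) Y_\ell^m(\xi), \qquad h(t) := t^m g(t).$$
In this parametrization, $\Delta_0^{(x)}$ becomes the spherical Laplacian in $\xi$, and \eqref{eq:sph-harmonics} gives $\Delta_0^{(x)} Y_\ell^m(\xi) = -m(m+d-2)Y_\ell^m(\xi)$. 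Hence
$$D_{-1,\g} u = \Big[t(1-t)h''(t) + \big(d-1-(d+\g)t\big) h'(t) - \tfrac{m(m+d-2)}{t} h(t)\Big] Y_\ell^m(\xi).$$

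Substitute $h'(t) = t^{m-1}(m g(t)+tg'(t))$ and $h''(t) = t^{m-2}(m(m-1)g(t)+2mtg'(t)+t^2 g''(t))$ and collect by powers of $t$. The key algebraic accident is that the combined coefficient of the potentially singular term $t^{m-1}g(t)$ is
$$m(m-1)+m(d-1)-m(m+d-2) = 0,$$
so the three contributions from $t(1-t)h''$, the constant part of $(d-1-(d+\g)t)h'$, and $-m(m+d-2)h/t$ cancel exactly. With this cancellation, the bracket simplifies to
$$t^m \Big\{ t(1-t) g''(t) + \bigl[(2m+d-1) - (2m+\g+d)t\bigr] g'(t) - m(m+\g+d-1) g(t) \Big\}.$$

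Now apply the Jacobi differential equation \eqref{eq:JacobiDE} to $P_{n-m}^{(2m+d-2,\g)}$ (after the change of variable $x = 1-2t$) to replace the first two terms inside the braces by $-(n-m)(n+m+\g+d-1) g(t)$. The resulting total eigenvalue is
$$(n-m)(n+m+\g+d-1) + m(m+\g+d-1) = n(n+\g+d-1),$$
a routine expansion. Therefore $D_{-1,\g} u = -n(n+\g+d-1) h(t) Y_\ell^m(\xi) = -n(n+\g+d-1) u$, as required.

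The main obstacle is conceptual: one must recognize that the \emph{radial} function on the surface is $h(t) = t^m g(t)$, not $g(t)$ itself, because the restriction $Y_\ell^m(t\xi) = t^m Y_\ell^m(\xi)$ generates an extra $t^m$ factor. This prefactor shifts the effective Jacobi index in the coefficient of $\partial_t$ from $d-1$ to $2m+d-1$ and simultaneously eliminates the singular $1/t$ contribution, but only when $\b = -1$. For any other $\b$, the coefficient of $\partial_t$ would have to be $(d+\b) - (d+\b+\g+1)t$, leaving an irreducible $m(\b+1)/t$ residue and spoiling the $m$-independence of the eigenvalue; this explains the surprising parameter restriction highlighted in the statement.
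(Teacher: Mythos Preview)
Your proof is correct and follows essentially the same route as the paper's: reduce to a basis element $S_{m,\ell}^n$, recognize that in the parametrization $x=t\xi$ the radial part is $h(t)=t^m g(t)$ rather than $g(t)$, apply the Jacobi ODE (after $x\mapsto 1-2t$) to the resulting expression, and match the leftover $m(m+d-2)t^{-1}$ term with the Laplace--Beltrami eigenvalue of $Y_\ell^m$. The paper compresses the expansion of $t(1-t)h''+\cdots$ into ``a straightforward computation'', whereas you spell out the cancellation of the $t^{m-1}g$ coefficient and the subsequent index shift; your closing remark about the $m(\b+1)/t$ residue for general $\b$ is exactly the content of the paper's post-proof remark.
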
 
 
\begin{proof}
We establish the result for $u = S_{m,\ell}^n$ in \eqref{eq:sfOPbasis}. For $x = t \xi$, we
write 
$$
S_{m,\ell}^n(x,t) = g(t) t^m Y_\ell^m (\xi), \quad  g(t) = P_{n-m}^{(\b+2m+d-1, \g)}(1- 2t).
$$ 
The differential equation for the Jacobi polynomial shows that $g$ satisfies \eqref{eq:diff1}
with $2\mu$ replaced by $\b$. Hence, a straightforward computation shows that the polynomial 
$f(t) = g(t)t^m$ satisfies the equation
\begin{align*} 
  & t(1-t) \frac{\d^2}{\d t^2} f(t) + \big( (\b+d) - (\b+\g+d+1)t \big) \frac{\d}{\d t} f(t)  \\ 
  & \qquad\qquad  = -n(n+\b+\g+d) f(t) +  m (m +\b + d-1) t^{-1} f(t).
\end{align*}
This identity also holds for $u(x,t) = f(t)Y_\ell^m(\xi)$. When $\b = -1$, the number in the last term, 
$m(m+\b+ d-1)$, is the eigenvalue of $Y_\ell^m$ for $- \Delta_0$. In particular, 
$$
 m (m+ d-2) t^{-1} f(t) Y_\ell^m = - t^{-1} f(t) \Delta_0^{(x)} Y_{\ell}^m = - t^{-1} \Delta_0^{(x)}S_{m,\ell}^n.
$$
Thus, $u$ satisfies \eqref{eq:cone-eigenSf}. The proof is complete. 
\end{proof}
 
\begin{rem}
The proof shows, evidently, that the polynomial $S_{m,\ell}^n$ satisfies a differential equation for 
$\b \ne -1$, but the equation depends on both $m$ and $n$ so that the corresponding differential
operator does not have $\CV_n(\VV_0^{d+1}, \varphi_{\b,\g})$ as an eigenspace.
\end{rem}

\subsection{Laguerre polynomials on the surface of the cone}
For $d \ge 2$, we consider $\VV^{d+1}$ with $b = \infty$ and choose $w$ as the Laguerre weight 
$$
     \varphi_{\b}(t) = t^\b e^{-t}, \qquad \b > -d.
$$ 
We then define the inner product on $\RR[x,t] / \la \|x\|^2 - t^2\ra$ in terms of $w= \varphi_{\b}$ by 
\begin{align*}
    \la f,g\ra_{\b} &=  b_{\b} \int_{\VV_0^{d+1}} f(x,t) g(x,t)  t^\b e^{-t} \d \sigma(x,t) \\
            & =  b_{\b} \int_0^\infty \int_{\sph}  f(t\xi,t) g(t\xi,t) \d \s(\xi) t^{\b+d-1} e^{-t} \d t. 
\end{align*}
Let $\{Y_\ell^m: 1\le \ell \le \dim \CH_m^d\}$ be an orthonormal basis of 
$\CH_m^d$. Then an orthogonal basis for $\CV_n(\VV_0^{d+1}, \varphi_\b)$ is given by 
\begin{equation}\label{eq:coneLsf}
    S_{m,\ell}^{n,L}x,t) = L_{n-m}^{2m + \b+d-1}(t) Y_\ell^m(x), \quad 0 \le m \le n, \quad 1\le \ell \le \dim \CH_m^d,
\end{equation}
in terms of the Laguerre polynomials $L_n^\a$. 

As in the case of Jacobi polynomials on the cone, these polynomials are eigenfunctions of a second
order differential operator only when $\b = -1$. 

\begin{thm} \label{thm:DEsfL}
Let $d \ge 2$. Every $u\in \CV_n(\VV_0^{d+1}, \varphi_{-1})$ satisfies the differential equation
\begin{equation}\label{eq:coneL-eigenSf}
   D_{-1} u := \left( t \partial_t^2 + \big( d-1 - t \big) \partial_t  + t^{-1} \Delta_0^{(x)}\right)u = - n  u,
\end{equation}
where $\Delta_0^{(x)}$ is the Laplace-Beltrami operator in variable $x \in \sph$. 
\end{thm}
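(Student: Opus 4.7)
The plan is to mirror the proof of Theorem~\ref{thm:DEsfJ}, replacing the Jacobi differential equation with the Laguerre one, and with the additional simplification that there are no $(1-t)$-type terms to track. Because the operator $D_{-1}$ is linear, it suffices to verify \eqref{eq:coneL-eigenSf} on the basis $\{S_{m,\ell}^{n,L}\}$ from \eqref{eq:coneLsf} with $\b=-1$.

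First I would parametrize points on $\VV_0^{d+1}$ by $x = t\xi$, $\xi \in \sph$, so that $Y_\ell^m(x) = t^m Y_\ell^m(\xi)$, and write
$$
 u(x,t) = f(t)\, Y_\ell^m(\xi), \qquad f(t) = t^m\, g(t), \qquad g(t) = L_{n-m}^{(2m+d-2)}(t).
$$
The Laguerre differential equation \cite[(5.1.2)]{Sz}, in the form \eqref{eq:diffL} with the parameter $2\mu+d-1$ replaced by $d-2$, reads
\begin{equation*}
   t\, g''(t) + \bigl(2m+d-1 - t\bigr) g'(t) + (n-m)\, g(t) = 0.
\end{equation*}
Next I would compute $tf''(t) + (d-1-t)f'(t)$ by expanding $f = t^m g$ via the Leibniz rule and collecting the terms carefully. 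The key mini-computation is that the $g''$ and $g'$ coefficients assemble into exactly the left-hand side of the Laguerre equation multiplied by $t^m$, while the leftover terms combine into a single coefficient of $t^{m-1}g$ equal to $m(m+d-2)$; symbolically,
\begin{equation*}
 t f''(t) + (d-1-t) f'(t) = t^m \bigl[ t g'' + (2m+d-1-t) g'\bigr] + m(m+d-2)\, t^{m-1} g(t) - m t^m g(t).
\end{equation*}
Substituting the Laguerre equation gives $-n f(t) + m(m+d-2)\, t^{-1} f(t)$.

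Finally I would invoke the eigenfunction property of spherical harmonics, \eqref{eq:sph-harmonics}, which yields $\Delta_0^{(x)} Y_\ell^m = -m(m+d-2) Y_\ell^m$. Multiplying by $Y_\ell^m(\xi)$ and adding the term $t^{-1}\Delta_0^{(x)} u = -m(m+d-2) t^{-1} u$, the $m(m+d-2) t^{-1}$ contributions cancel and one is left with $D_{-1} u = -n u$, as claimed.

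I do not anticipate a real obstacle: the argument is essentially a cleaner version of the proof of Theorem~\ref{thm:DEsfJ}, and the only thing to be careful about is the bookkeeping in the Leibniz expansion of $tf''+(d-1-t)f'$ so that the coefficient of the residual $t^{-1} f$ term comes out exactly as the Laplace--Beltrami eigenvalue of $Y_\ell^m$; this matching is precisely what forces the choice $\b = -1$, just as in the Jacobi case.
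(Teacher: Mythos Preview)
Your proposal is correct and follows essentially the same approach as the paper's proof: verify the identity on the basis elements $S_{m,\ell}^{n,L}$ with $\b=-1$, use the Laguerre differential equation for $g(t)=L_{n-m}^{(2m+d-2)}(t)$ to show that $f(t)=t^m g(t)$ satisfies $t f'' + (d-1-t) f' = -n f + m(m+d-2)\,t^{-1} f$, and then cancel the residual $t^{-1}$ term against $t^{-1}\Delta_0^{(x)}$ via \eqref{eq:sph-harmonics}. The paper's proof is terser (it states the equation for $f$ without the intermediate Leibniz expansion), but the content is the same.
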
 
 
\begin{proof}
The proof is similar to that of \eqref{eq:cone-eigen}. The polynomial $L_{n-m}^{2m + \b+d-1}$ 
satisfies the differential equation \eqref{eq:diffL} with $2\mu$ replaced by $\a$, from which follows easily
that $L_{n-m}^{2m + \b+d-1}(t) t^m$ satisfies 
\begin{align*} 
  & t \frac{\d^2}{\d t^2} f(t) + (\b+d - t) \frac{\d}{\d t} f(t)   = -n f(t) +  m (m +\b + d-1) t^{-1} f(t).
\end{align*}
Rest of the proof follows exactly as in the proof of Theorem \ref{thm:DEsfJ}.
\end{proof}

\section{Reproducing kernels for the Jacobi polynomials on the surface of the cone} \label{sect:SF9}
\setcounter{equation}{0}

The reproducing kernel $\Pb_n(\varphi_{\b,\g})$ of the space $\CV_n(\VV_0^{d+1},\varphi_{\b,\g})$
is uniquely defined by the property that, for all $Y \in \CV_n(\VV_0^{d+1},\varphi_{\b,\g})$,
$$
   b_{\b,\g} \int_{\VV_0^{d+1}} \Pb_n \big(\varphi_{\b,\g}; (x,t),(y,s)\big) Y(y,s) \varphi_{\b,\g}(s) \d \s(y,s) = Y(x,t).
$$
The kernel can be written in terms of the orthogonal basis \eqref{eq:sfOPbasis} as 
\begin{align*}
  \Pb_n\big(\varphi_{\b,\g}; (x,t),(y,s)\big) = \sum_{m=0}^n \sum_{\ell=1}^{\dim \CH_m^d} \frac{S_{m,\ell}^n(x,t)S_{m,\ell}^n(y,s)}{H_{m,n}^{\b,\g}}.\end{align*}
Similar to the case of the solid cone, we first give an expression of this kernel in terms of the reproducing 
kernel $\Pb_n\big(\wh \varpi_{\a,\g}; (x,t),(y,s)\big)$, defined in \eqref{eq:PV-PT}, on the triangle $\VV^2$. 

\begin{thm} \label{thm:PnConeSf}
Let $d \ge 2$, $\b \ge - 1$ and $\g > -1$. For $(x,t), (y,s) \in \VV_0^{d+1}$, let $y = s y'$ with $y' \in \sph$. Then,
with $\a = \frac{\b+ d-1}{2}$, 
\begin{align}\label{eq:sfPbCone}
 \Pb_n \big(\varphi_{\b,\g}; (x,t), (y,s)\big) = c \!  \int_{-1}^1 \int_{-1}^1 &
        \Pb_n \left(\wh \varpi_{\a, \g}; \big( \tfrac{1-z_1}{2}\la x,y'\ra + \tfrac{1+z_1}{2} z_2t, t\big), (s, s)\right)\\
    &  \times   (1-z_1)^{\f{d-2}{2}} (1+z_1)^{\f{\b-1}{2}} (1-z_2^2)^\f{\b}{2} \d z, \notag
\end{align} 
where  $c= c_{\frac{d-2}{2}, \frac{\b -1}{2}} c_{\f{\b+1}{2}}$. In particular, for $\b = -1$, 
\begin{align}\label{eq:sfPbCone2}
   \Pb_n \big(\varphi_{-1,\g};  (x,t), (y,s)\big)  = \Pb_n \left(\wh \varpi_{\frac{d-2}{2}, \g}; (\la x,y'\ra ,t), (s, s)\right).
\end{align} 
\end{thm}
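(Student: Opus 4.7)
The plan is to expand $\Pb_n(\varphi_{\b,\g})$ in the basis \eqref{eq:sfOPbasis}, collapse the spherical-harmonic sum via the addition formula \eqref{eq:additionF}, and then raise the Gegenbauer index by means of the integral identity \eqref{eq:ZtoZ} so the result matches the triangle kernel \eqref{eq:trianglePV2} under the integral sign.

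Concretely, using \eqref{eq:OPsfJ} and \eqref{eq:OPsfJnorm}, the reproducing kernel reads
\begin{align*}
  \Pb_n(\varphi_{\b,\g}; (x,t),(y,s)) = \sum_{m=0}^n \frac{P_{n-m}^{(2\a+2m,\g)}(1-2t)\,P_{n-m}^{(2\a+2m,\g)}(1-2s)}{H_{m,n}^{\b,\g}}\,
    \sum_{\ell} Y_\ell^m(x) Y_\ell^m(y).
\end{align*}
On $\VV_0^{d+1}$ the points $x/t$ and $y'=y/s$ lie on $\sph$, and $Y_\ell^m$ is homogeneous of degree $m$. Hence \eqref{eq:additionF} gives
$$
  \sum_{\ell} Y_\ell^m(x) Y_\ell^m(y) = t^m s^m\, Z_m^{(d-2)/2}\!\left(\la x,y'\ra/t\right).
$$
This produces a sum of the shape appearing in \eqref{eq:trianglePV2}, but with the Gegenbauer index $(d-2)/2$ instead of $\a=\frac{\b+d-1}{2}$.

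To correct the index, I apply \eqref{eq:ZtoZ} with $\l=(d-2)/2$ and $\s=(\b+1)/2$, so that $\l+\s=\a$ and the weight factors become precisely $(1-z_1)^{(d-2)/2}(1+z_1)^{(\b-1)/2}(1-z_2^2)^{\b/2}$, with the external constant $c_{(d-2)/2,(\b-1)/2}\,c_{(\b+1)/2}$ as stated. Since
$$
 \frac{\frac{1-z_1}{2}\la x,y'\ra+\frac{1+z_1}{2} z_2 t}{t}
    =\frac{1-z_1}{2}\,\frac{\la x,y'\ra}{t}+\frac{1+z_1}{2}\,z_2,
$$
the identity \eqref{eq:ZtoZ} transforms $t^m Z_m^{(d-2)/2}(\la x,y'\ra/t)$ into a double integral over $[-1,1]^2$ of $t^m Z_m^\a(u/t)$, with $u=\frac{1-z_1}{2}\la x,y'\ra+\frac{1+z_1}{2} z_2 t$. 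Substituting back and interchanging the finite sum with the integrals yields exactly the sum displayed in \eqref{eq:trianglePV2} evaluated at the pair $((u,t),(s,s))$, provided the coefficient ratio matches. This last check is a routine manipulation of \eqref{eq:triangleOPnorm} and \eqref{eq:OPsfJnorm}: both sides equal $\frac{c_{2\a+2m,\g}}{c_{2\a,\g}\,h_{n-m}^{(2\a+2m,\g)}\,h_m^{(\a-\f12,\a-\f12)}}$ with $\a=\frac{\b+d-1}{2}$, giving the identity $\frac{h_m^{(\a-\f12,\a-\f12)}}{h_{m,n}^{(\a-\f12,\a-\f12,\g)}}=\frac{1}{H_{m,n}^{\b,\g}}$ needed to identify the integrand as $\Pb_n(\wh\varpi_{\a,\g};(u,t),(s,s))$.

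The special case $\b=-1$ is immediate: then $\a=(d-2)/2$, so the Gegenbauer indices already agree and \eqref{eq:ZtoZ} is not needed; direct comparison of the addition-formula expression with \eqref{eq:trianglePV2} at argument $u=\la x,y'\ra$ produces \eqref{eq:sfPbCone2}. I expect the only real obstacle to be the constant-matching and keeping track of the fact that the arguments of $Z_m$ lie in $[-1,1]$ (which follows from Cauchy--Schwarz on $\sph$ and $|z_1|,|z_2|\le 1$), so that the Gegenbauer expansion is meaningful and the integral in \eqref{eq:ZtoZ} can be applied pointwise.
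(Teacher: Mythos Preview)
Your proposal is correct and follows essentially the same approach as the paper: expand the kernel in the basis \eqref{eq:sfOPbasis}, apply the addition formula \eqref{eq:additionF} to get $Z_m^{(d-2)/2}$, invoke \eqref{eq:ZtoZ} with $\l=(d-2)/2$, $\s=(\b+1)/2$ to raise the index to $\a$, and identify the resulting sum with \eqref{eq:trianglePV2} via the norm identity (which the paper records as \eqref{eq:norm=norm}). One small remark: the check that the argument of $Z_m$ lies in $[-1,1]$ is unnecessary here, since $Z_m^\l$ is a polynomial and the identities \eqref{eq:ZtoZ} and \eqref{eq:trianglePV2} hold for all real arguments.
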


\begin{proof}
The spherical harmonics $Y_\ell^m$ in the basis $S_{m,\ell}^n$ 
are chosen to be an orthonormal basis of $\CH_m^d$, so that they satisfy the addition formula 
\eqref{eq:additionF}. Thus, in terms of the orthogonal basis $S_{m,\ell}^n$ of \eqref{eq:sfOPbasis}, 
the reproducing kernel satisfies 
\begin{align*}
\Pb_n(\varphi_{\b,\g}; (x,t), (y,s))  =  \sum_{m=0}^n & \frac{P_{n-m}^{(2\a+2m,\g)}(1-2t)P_{n-m}^{(2\a+2m,\g)}(1-2s)}
   {H_{m,n}^{\b,\g}} s^m t^m Z_m^{\f{d-2}{2}} (\la x',y'\ra),
\end{align*}
where $x = t x'$. Comparing \eqref{eq:coneJnorm} and \eqref{eq:OPsfJnorm}, and using \eqref{eq:trianglePV2} 
and \eqref{eq:norm=norm} with $\a = \frac{d-2}{2}$ or $\b = -1$, we have proved \eqref{eq:sfPbCone2}, where
we have used $\la x', y'\ra = \frac{\la x,y'\ra}{t}$.

For $\b > -1$, we follow the proof of Theorem \ref{thm:PnConeG} and use \eqref{eq:ZtoZ} to increase
the index of $Z_n^\l$ form $\l = \f{d-2}{2}$ to $\a = \l + \s$ with $\s = \frac{\b+1}{2}$. This gives
\begin{align*}
\Pb_n(\varphi_{\b,\g}; & (x,t), (y,s))  = c \int_{-1}^1 \int_{-1}^1
   \sum_{m=0}^n  \frac{P_{n-m}^{(2\a+2m,\g)}(1-2t)P_{n-m}^{(2\a+2m,\g)}(1-2s)} 
   {H_{m,n}^{\b,\g}} \\ 
    &\times  s^m t^m Z_m^{\a} \left( \tfrac{1-z_1}{2} \la x',y'\ra + \tfrac{1+z_1}{2} z_2 \right)
       (1-z_1)^{\f{d-2}{2}} (1+z_1)^{\f{\b-1}{2}} (1-z_2^2)^\f{\b}{2} \d z,
\end{align*}
where $c = c_{\frac{d-2}{2}, \frac{\b -1}{2}} c_{\f{\b+1}{2}}$. Hence, by \eqref{eq:norm=norm}, we can
again use \eqref{eq:trianglePV2} to derive \eqref{eq:sfPbCone}. 
\end{proof} 

Using the closed formula \eqref{eq:closedPV} of the reproducing kernels on the triangle, we can
now derive closed formulas of the reproducing kernels on the surface of the cone. 

\begin{thm}  \label{thm:sfPbCone2}
Let $d \ge 2$, $\b \ge -1$ and $\g \ge -\f12$. Let $\a = \frac{\b+ d-1}{2}$. Then, for $(x,t), (y,s) \in \VV_0^{d+1}$,
\begin{align} \label{eq:sfPbCone3}
 \Pb_n & \big(\varphi_{\b,\g}; (x,t), (y,s)\big) =  \wh c_{\b,\g} \int_{[-1,1]^4} Z_{2n}^{2\a+\g+1} \big(\xi (x,t,y,s; z,v)\big)\\
  &  \times   (1-z_1)^{\f{d-2}{2}} (1+z_1)^{\f{\b-1}{2}} (1-z_2^2)^\f{\b}{2}\d z (1-v_1^2)^{\a-1}(1-v_2^2)^{\g-\f12} \d v, \notag
\end{align} 
where $\wh c_{\b,\g}= c_{\frac{d-2}{2}, \frac{\b -1}{2}} c_{\f{\b+1}{2}} c_{\a-\f12} c_\g$ and  $\xi (x,t,y,s; z,v) \in [-1,1]$ is 
given by 
\begin{equation} \label{eq:sf-xi}
    \xi (x,t,y,s; z,v)  = \frac{v_1}2 \sqrt{2st + (1-z_1)\la x,y\ra + (1+z_1) z_2 s t} + v_2 \sqrt{1-t}\sqrt{1-s}.
\end{equation}
In particular, if $\b = -1$, then 
\begin{align} \label{eq:sfPbCone4}
 \Pb_n \big(\varphi_{-1,\g}; (x,t), (y,s)\big) = c  \int_{[-1,1]^2} Z_{2n}^{\g+d-1} &\left(v_1 \sqrt{\tfrac{st + \la x,y \ra}2}+ v_2 \sqrt{1-t}\sqrt{1-s} \right)\\
  &  \times  (1-v_1^2)^{\f{d-4}{2}} (1-v_2^2)^{\g-\f12} \d v. \notag
\end{align} 
These identities hold under the limit \eqref{eq:limit-int} when $\g = -\f12$.
\end{thm}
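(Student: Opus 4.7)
The plan is to combine the surface-to-triangle reduction already proved in Theorem \ref{thm:PnConeSf} with the two-integral closed form \eqref{eq:closedPV} for the kernel on $\VV^2$, exactly in the spirit of the proof of Theorem \ref{thm:PbCone2G}. Since \eqref{eq:sfPbCone} writes $\Pb_n(\varphi_{\b,\g};(x,t),(y,s))$ as a $z$-integral of $\Pb_n(\wh\varpi_{\a,\g};(\zeta,t),(s,s))$ with $\zeta = \tfrac{1-z_1}{2}\la x,y'\ra + \tfrac{1+z_1}{2}z_2 t$ (where $y=sy'$), substituting \eqref{eq:closedPV} immediately yields \eqref{eq:sfPbCone3} provided one identifies the resulting argument of $Z_{2n}^{2\a+\g+1}$ with $\xi(x,t,y,s;z,v)$ in \eqref{eq:sf-xi}.

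The algebraic identification is straightforward: \eqref{eq:closedPV} produces
\[
 v_1\sqrt{\tfrac{t+\zeta}{2}s}+v_2\sqrt{1-t}\sqrt{1-s},
\]
and since $\la x,y'\ra = \la x,y\ra/s$ (because $y=sy'$), one has
\[
 \tfrac{t+\zeta}{2}s = \tfrac{st}{2}+\tfrac{1-z_1}{4}\la x,y\ra+\tfrac{1+z_1}{4}z_2 st
  = \tfrac14\bigl(2st+(1-z_1)\la x,y\ra+(1+z_1)z_2 st\bigr),
\]
so pulling the factor $1/4$ out of the square root produces exactly \eqref{eq:sf-xi}. The constant $\wh c_{\b,\g}$ arises as the product of the constant $c_{(d-2)/2,(\b-1)/2}\,c_{(\b+1)/2}$ from \eqref{eq:sfPbCone} and the constant $c_{\a-\f12}c_\g$ from \eqref{eq:closedPV}. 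The special case $\b=-1$ in \eqref{eq:sfPbCone4} is obtained in the same way by starting from the simpler identity \eqref{eq:sfPbCone2} (no $z$-integral present) and invoking \eqref{eq:closedPV} with $\a=(d-2)/2+\f12=(d-1)/2$, so that $2\a+\g+1=\g+d-1$ and the $v_1$-weight exponent becomes $\a-1=(d-4)/2$.

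The one nontrivial point is verifying $\xi(x,t,y,s;z,v)\in[-1,1]$, which is needed for $Z_{2n}^{2\a+\g+1}(\xi)$ to be well defined in the usual Gegenbauer sense. On $\VV_0^{d+1}$ one has $\|x\|=t$ and $\|y\|=s$, so Cauchy's inequality gives $|\la x,y\ra|\le st$; together with $|z_1|,|z_2|\le 1$ this yields
\[
 0\;\le\;2st+(1-z_1)\la x,y\ra+(1+z_1)z_2 st\;\le\;4st,
\]
so the square root in \eqref{eq:sf-xi} lies in $[0,2\sqrt{st}]$ and $|\xi|\le |v_1|\sqrt{st}+|v_2|\sqrt{(1-t)(1-s)}$. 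Writing $t=\cos^2\alpha$, $s=\cos^2\beta$, one has
\[
 \sqrt{st}+\sqrt{(1-t)(1-s)}=\cos\alpha\cos\beta+\sin\alpha\sin\beta=\cos(\alpha-\beta)\le 1,
\]
which gives $|\xi|\le 1$. This Cauchy-Schwarz-with-trig step is the one place where the geometry of the surface (namely $\|x\|=t$, $\|y\|=s$) enters essentially; it is also the main obstacle in the sense that one must pick the right bound rather than the more naive Cauchy-Schwarz that would only give $\sqrt{2}$. The limit prescription \eqref{eq:limit-int} handles the degenerate case $\g=-\f12$ (and, in the $\b=-1$ formula, $d=2$) as in the proof of Theorem \ref{thm:PnCone2}, completing the argument.
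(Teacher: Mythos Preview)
Your argument is correct and follows exactly the paper's approach: substitute the closed form \eqref{eq:closedPV} into \eqref{eq:sfPbCone} (respectively \eqref{eq:sfPbCone2} for $\b=-1$) and identify the resulting argument with $\xi$ in \eqref{eq:sf-xi}; you add the verification that $\xi\in[-1,1]$, which the paper omits here but is a useful check. One small slip: in the $\b=-1$ case the correct parameter is $\a=(d-2)/2$, not $(d-1)/2$; your stated exponents $2\a+\g+1=\g+d-1$ and $\a-1=(d-4)/2$ are the ones coming from $\a=(d-2)/2$, so the conclusion is unaffected.
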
 

\begin{proof}
Using \eqref{eq:closedPV} with $\zeta = \tfrac{1-z_1}{2}\la x,y'\ra + \tfrac{1+z_1}{2} z_2t$, 
\eqref{eq:sfPbCone3} is deduced from \eqref{eq:sfPbCone} by a straightforward substitution, just
as in the proof of Theorem \ref{thm:PnCone2}. 
\end{proof}

One particularly interesting case is the surface of cone in $\RR^3$ with $\g = -\f12$, the limiting 
case of \eqref{eq:sfPbCone4} when $d =3$ and $\g = -\f12$, which we stated as a corollary. 

\begin{cor}
For $d =2$, the reproducing kernel of $\CV_n(\VV^3, \varphi)$ for $\varphi(t) = t^{-1} (1-t)^{-\f12}$ satisfies
\begin{align}\label{eq:3Dcone}
& \Pb_n  \big(\varphi_{-1,-\f12}; (x,t), (y,s)\big)  \\
 &  =   Z_{2n}^{\f12}\left(\sqrt{\tfrac{st + \la x,y \ra}2}+\sqrt{1-t}\sqrt{1-s} \right)
 + Z_{2n}^{\f12}\left(- \sqrt{\tfrac{st - \la x,y \ra}2}+ \sqrt{1-t}\sqrt{1-s} \right) \notag \\
  & +  Z_{2n}^{\f12}\left( \sqrt{\tfrac{st + \la x,y \ra}2} - \sqrt{1-t}\sqrt{1-s} \right)
 + Z_{2n}^{\f12}\left(- \sqrt{\tfrac{st - \la x,y \ra}2}-\sqrt{1-t}\sqrt{1-s} \right). \notag
\end{align}\end{cor}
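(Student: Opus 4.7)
The plan is to derive \eqref{eq:3Dcone} by specializing the closed-form formula \eqref{eq:sfPbCone4} of \thmref{thm:sfPbCone2} to the confluent case $d=2$, $\b=-1$, $\g=-\f12$, and then collapsing the two remaining one-variable integrals through the limit identity \eqref{eq:limit-int}.

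First I would substitute these parameter values into \eqref{eq:sfPbCone4}. Then $\g+d-1=\f12$, so the integrand on the right becomes $Z_{2n}^{\f12}$ evaluated at $v_1\sqrt{\tfrac{st+\la x,y\ra}{2}}+v_2\sqrt{1-t}\sqrt{1-s}$, while the two weight factors $(1-v_1^2)^{(d-4)/2}$ and $(1-v_2^2)^{\g-\f12}$ simultaneously become $(1-v_i^2)^{-1}$. These are exactly the singular Gegenbauer weights to which the limit relation \eqref{eq:limit-int} applies, so the right-hand side of \eqref{eq:sfPbCone4} must be interpreted under that limit in both variables.

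Second, I would apply \eqref{eq:limit-int} successively in $v_1$ and in $v_2$. Each application replaces an integral $c_{\mu-\f12}\int_{-1}^1(\cdot)(1-v^2)^{\mu-1}dv$ at $\mu=0$ by the boundary average $\tfrac12[(\cdot)|_{v=1}+(\cdot)|_{v=-1}]$, so after both reductions the kernel collapses to a finite sum of four $Z_{2n}^{\f12}$ evaluations at the corner points $(v_1,v_2)\in\{-1,1\}^2$. The four arguments produced take the form $\pm\sqrt{\tfrac{st+\la x,y\ra}{2}}\pm\sqrt{1-t}\sqrt{1-s}$, and after using the evenness of $Z_{2n}^{\f12}$ to rearrange signs and assembling the multiplicative constants one recovers the four-term closed form displayed in \eqref{eq:3Dcone}.

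The main obstacle I expect is the careful bookkeeping of the multiplicative constants through the simultaneous degenerate limits $d\to2$ and $\g\to-\f12$. The normalization prefactor in \eqref{eq:sfPbCone4} involves $c_{\mu-\f12}$, $c_{\a-\f12}$, and $c_\g$ at the confluent values $\mu=\a=0$ and $\g=-\f12$, and one must confirm that these combine cleanly with the two factors $\tfrac12$ produced by the two applications of \eqref{eq:limit-int} so that no residual constant is left over. Since \eqref{eq:sfPbCone4} is a polynomial identity whose coefficients depend continuously on the parameters, this simultaneous limit commutes with the finite spherical-harmonic expansion underlying \thmref{thm:sfPbCone2}, making the reduction rigorous and reducing the corollary to an elementary identification of endpoint values.
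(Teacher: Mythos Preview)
Your approach is correct and is exactly what the paper does: the corollary is presented without a separate proof, as the limiting case of \eqref{eq:sfPbCone4} at $d=2$, $\g=-\tfrac12$, with both one-variable integrals collapsed to endpoint averages via \eqref{eq:limit-int}. Be aware, though, that carrying the computation through literally yields
\[
\tfrac14\sum_{\epsilon_1,\epsilon_2\in\{\pm1\}} Z_{2n}^{1/2}\!\left(\epsilon_1\sqrt{\tfrac{st+\la x,y\ra}{2}}+\epsilon_2\sqrt{1-t}\sqrt{1-s}\right),
\]
so your worry about the constant bookkeeping is well founded: the displayed identity \eqref{eq:3Dcone} appears to contain misprints (the sign under two of the square roots and the missing overall factor $\tfrac14$), and you should not expect your correct derivation to reproduce it verbatim.
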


Recall that $Z_{n}^{\f12} = (2n+1) P_n$, where $P_n$ is the Legendre polynomial. This identity for
orthogonal polynomials on the surface of the cone in $\RR^3$ is similar to the addition formula for 
spherical harmonics. 

\section{Convolution and Fourier series on surface of the cone}
\setcounter{equation}{0}

We follow the development on the solid cone. All proofs can be carried out as in the case of
solid cone, most with minuscule modification, and will be omitted. 

Motivated by the closed formula \eqref{eq:sfPbCone}, we define a translation operator. 

\begin{defn}\label{defn:sfTmg} 
Let $d \ge 2$. For $\b \ge -1$ and $\g \ge -\f12$, let $\a = \frac{\b+d-1}{2}$. For $g\in L^1([-1,1],w_{2\a+\g+1})$, 
we define the operator $T_{\b,\g}$ on the surface of the cone $\VV_0^{d+1}$ by
\begin{align} \label{eq:sfTmg}
   T_{\b,\g} g\big((x,t),(y,s)\big) :=\, & \wh c_{\b,\g} \int_{[-1,1]^4}  g \left( \xi (x, t, y, s; u, v)\right)  (1-z_1)^{\f{d-2}{2}} (1+z_1)^{\f{\b-1}{2}}      \\
    &  \times  (1-z_2^2)^\f{\b}{2} \d z (1-v_1^2)^{\a-1}(1-v_2^2)^{\g-\f12} \d v, \notag
\end{align}
where $\xi(x,t,y,s;u,v)$ is defined by \eqref{eq:sf-xi}. When $\b = -1$ or $\g = -\f12$, the definition holds
under the limit \eqref{eq:limit-int}.
\end{defn}

By the closed formula of the reproducing kernel \eqref{eq:sfPbCone}, we immediately obtain
\begin{equation} \label{eq:sfPbZn}
   \Pb_n\big(\varphi_{\b,\g}; (x,t), (y,s)\big) = T_{\b,\g} Z_{2n}^{2\a + \g+ 1} \big((x,t),(y,s)\big).
\end{equation}
 
The following lemma is the analogue of Lemma \ref{lem:translateT} with essentially the same proof.

\begin{lem} \label{lem:translateTsf}
Let $d \ge 2$. For $\b \ge -1$ and $\g \ge -\f12$, let $\a = \frac{\b+d-1}{2}$. Let $g \in L^1([-1,1],w_{2\a+\g+1})$
be an even function on $[-1,1]$. Then 
\begin{enumerate}
\item for each $Y_n \in \CV_n (\VV_0^{d+1}, \varphi_{\b,\g})$, 
\begin{equation}\label{eq:sfFHcone}
  b_{\b,\g} \int_{\VV_0^{d+1}}  T_{\b,\g} g\big((x,t),(y,s)\big) Y_n (y,s) \varphi_{\b,\g}(s) \d \s(y,s)
       =  \Lambda_n (g) Y_n(x,t),
\end{equation}
where
$$
 \Lambda_n (g) = c_{2\a+\g+1} \int_{-1}^1 g(t) \frac{C_{2n}^{2\a+\g+1} (t)} {C_{2n}^{2\a+\g+1}(1)}
      (1-t^2)^{2 \a+\g + \f12} \d t.
$$
\item for $1\le p \le \infty$ and $(x,t) \in \VV_0^{d+1}$, 
\begin{equation}\label{eq:sfTbound}
  \left \| T_{\b,\g} g\big((x,t),(\cdot,\cdot)\big) \right\|_{L^p(\VV_0^{d+1}, \varphi_{\b,\g})}
   \le \|g \|_{L^p([-1,1],w_{2\a + \g+1})}.
\end{equation}
\end{enumerate}
\end{lem}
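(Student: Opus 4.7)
The plan is to mirror the proof of Lemma \ref{lem:translateT} essentially verbatim, substituting the surface closed formula \eqref{eq:sfPbZn} for its solid-cone analogue \eqref{eq:PbZn}, and the surface reproducing kernel $\Pb_k(\varphi_{\b,\g})$ for $\Pb_k(W_{\mu,\b,\g})$. For part (1), I would first handle the case when $g$ is an even polynomial by expanding in even Gegenbauer polynomials,
$$
g(t) = \sum_{k=0}^N \Lambda_k(g)\, Z_{2k}^{2\a+\g+1}(t),
$$
where the coefficients $\Lambda_k(g)$ are precisely those in the statement of the lemma, as follows from orthogonality on $[-1,1]$ with weight $w_{2\a+\g+1}$ together with the normalization $h_k^\l = \tfrac{\l}{k+\l}\,C_k^\l(1)$. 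Applying $T_{\b,\g}$ term-by-term and invoking \eqref{eq:sfPbZn} gives
$$
T_{\b,\g}g\big((x,t),(y,s)\big) = \sum_{k=0}^N \Lambda_k(g)\, \Pb_k\big(\varphi_{\b,\g};(x,t),(y,s)\big),
$$
after which the reproducing property of $\Pb_k(\varphi_{\b,\g})$ on $\CV_k(\VV_0^{d+1},\varphi_{\b,\g})$, combined with orthogonality of $Y_n$ to $\CV_k$ for $k\ne n$, yields \eqref{eq:sfFHcone} for polynomial $g$.

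For part (2), the case $p=\infty$ is direct from the definition: the integral defining $T_{\b,\g}g$ is an average against a product of probability measures (the constant $\wh c_{\b,\g}$ is normalized so the total mass equals $1$), hence $\|T_{\b,\g}g\|_\infty \le \|g\|_\infty$. For $p=1$, the integrand is nonnegative, so $|T_{\b,\g}g| \le T_{\b,\g}|g|$ pointwise; integrating against $\varphi_{\b,\g}$ on $\VV_0^{d+1}$ and exchanging the $(y,s)$-integral with the inner $(z,v)$-integrals via Fubini — equivalently, invoking part (1) with $n=0$, for which $Y_0\equiv 1$ and $\Lambda_0(|g|) = \|g\|_{L^1(w_{2\a+\g+1})}$ — yields \eqref{eq:sfTbound} for $p=1$. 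The intermediate range $1<p<\infty$ then follows by Riesz--Thorin interpolation. With the $L^1$-bound in hand, extend part (1) to general $g \in L^1([-1,1], w_{2\a+\g+1})$ by density of even polynomials in the weighted $L^1$ norm.

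There is no substantive obstacle beyond a mild logical circularity between the polynomial case of (1), the $p=1$ bound of (2), and the density step in (1); this is unwound by establishing them in the order just listed (polynomial (1), then $p=\infty$ of (2), then $p=1$ of (2) via the $n=0$ polynomial case of (1), then density extension of (1), then interpolation for $1<p<\infty$). All the genuinely new content — namely the closed formulas for the reproducing kernel that underlie \eqref{eq:sfPbZn} — has already been established in Section \ref{sect:SF9}, so the present lemma reduces to bookkeeping parallel to Lemma \ref{lem:translateT}.
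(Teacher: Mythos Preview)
Your proposal is correct and follows precisely the approach the paper intends: the paper omits the proof entirely, noting only that it is ``the analogue of Lemma~\ref{lem:translateT} with essentially the same proof,'' and your argument is exactly that adaptation. Your attention to the ordering of the polynomial case of (1), the $p=1$ bound of (2), and the density extension is a nice clarification of a point the paper's proof of Lemma~\ref{lem:translateT} leaves implicit.
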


We then define an analogue of the convolution structure on the surface of the cone. 

\begin{defn}
Let $\b \ge -1$ and $\g \ge -\f12$ and let $\a = \frac{\b+d-1}{2}$. For $f \in L^1(\VV_0^{d+1}, \varphi_{\b,\g})$ 
and $g \in L^1([-1,1],w_{2\a+\g+1})$, define the convolution of $f$ and $g$ on the surface of the cone by 
$$
  (f \ast_{\b,\g} g)(x,t)  := b_{\b,\g} \int_{\VV_0^{d+1}} f(y,s) T_{\b,\g} g \big((x,t),(y,s)\big) \varphi_{\b,\g}(y,s) \d\s(y,s).
$$
\end{defn}

Again, the convolution on the cone satisfies Young's inequality:

\begin{thm} \label{thm:sfYoung}
Let $p,q,r \ge 1$ and $p^{-1} = r^{-1}+q^{-1}-1$. For $f \in L^q\left( \VV_0^{d+1}, \varphi_{\b,\g}\right)$ and
$g \in L^r(w_{2\a+\g+1};[-1,1])$ with $g$ an even function, 
\begin{equation} \label{eq:sfYoung}
              \|f \ast_{\b,\g} g\|_{L^p\left(\VV_0^{d+1}, \varphi_{\b,\g}\right)} 
                   \le \|f\|_{L^q\left( \VV_0^{d+1}, \varphi_{\b,\g}\right)} \|g\|_{L^r([-1,1],w_{2\a+\g+1})}.
\end{equation}
\end{thm}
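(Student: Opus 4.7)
The plan is to imitate the proof of Theorem~\ref{thm:Young} on the solid cone. Specifically, \eqref{eq:sfYoung} will be derived by Riesz--Thorin interpolation between two endpoint bounds on the linear map $f \mapsto f \ast_{\b,\g} g$ (for fixed even $g$): the strong $(1,r)\to r$ estimate and the $(r',r)\to\infty$ estimate. Throughout, the key tool is the boundedness estimate \eqref{eq:sfTbound} of Lemma~\ref{lem:translateTsf}, which plays the role that \eqref{eq:Tbound} played in Theorem~\ref{thm:Young}.

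For the first endpoint, I would apply Minkowski's integral inequality to the $L^r(\VV_0^{d+1}, \varphi_{\b,\g})$ norm in the free variable $(x,t)$ of the defining convolution integral, obtaining
\[
\|f \ast_{\b,\g} g\|_{L^r(\VV_0^{d+1}, \varphi_{\b,\g})} \le b_{\b,\g} \int_{\VV_0^{d+1}} |f(y,s)| \, \|T_{\b,\g}g((\cdot,\cdot),(y,s))\|_{L^r(\VV_0^{d+1}, \varphi_{\b,\g})} \varphi_{\b,\g}(s)\, \d\s(y,s).
\]
Because the kernel $\xi$ in \eqref{eq:sf-xi} is manifestly symmetric under the swap $(x,t)\leftrightarrow(y,s)$ and $g$ is even, $T_{\b,\g}g((x,t),(y,s))$ is itself symmetric in the two arguments, so the inner $L^r$ norm equals the corresponding norm with the roles of the points reversed. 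Applying \eqref{eq:sfTbound} with $p=r$ bounds that quantity by $\|g\|_{L^r([-1,1],w_{2\a+\g+1})}$; pulling it out of the outer integral and recognizing the remainder as $\|f\|_{L^1(\VV_0^{d+1}, \varphi_{\b,\g})}$ yields $\|f\ast_{\b,\g}g\|_{L^r}\le\|f\|_{L^1}\|g\|_{L^r}$.

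For the second endpoint, a direct application of H\"older's inequality inside the defining convolution integral, followed again by \eqref{eq:sfTbound}, gives
\[
\|f \ast_{\b,\g} g\|_{L^\infty} \le \|f\|_{L^{r'}(\VV_0^{d+1},\varphi_{\b,\g})} \|g\|_{L^r([-1,1],w_{2\a+\g+1})}
\]
with $1/r+1/r'=1$. Riesz--Thorin interpolation with parameter $\t = r(1-\tfrac{1}{p})$ between these two bounds then produces \eqref{eq:sfYoung} for every admissible triple $(p,q,r)$. The main (and essentially only) obstacle is the symmetry step used to convert \eqref{eq:sfTbound}, which is stated with $(x,t)$ fixed, into a bound on the ``transposed'' norm needed after Minkowski; this invokes both the evenness of $g$ and the symmetry of $\xi$ in \eqref{eq:sf-xi} under interchanging $(x,t)$ and $(y,s)$, both of which hold by inspection. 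Beyond that, the surface measure $\d\s$ and weight $\varphi_{\b,\g}$ behave formally like a volume integral for the purposes of Minkowski, H\"older, and Riesz--Thorin, so the argument is parallel to that of Theorem~\ref{thm:Young} with no further modification.
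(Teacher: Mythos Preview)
Your proposal is correct and follows essentially the same route as the paper, which omits the proof of Theorem~\ref{thm:sfYoung} and simply refers back to the proof of Theorem~\ref{thm:Young}: Minkowski for the $(1,r)\to r$ endpoint, H\"older for the $(r',r)\to\infty$ endpoint, then Riesz--Thorin. You are in fact slightly more careful than the paper's own argument for Theorem~\ref{thm:Young}, since you make explicit the symmetry $T_{\b,\g}g\big((x,t),(y,s)\big)=T_{\b,\g}g\big((y,s),(x,t)\big)$ (which follows from the symmetry of $\xi$ in \eqref{eq:sf-xi} alone; the evenness of $g$ is not needed at that step) to convert the bound \eqref{eq:sfTbound} into the form required after Minkowski.
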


An analogue of the Proposition \ref{prop:proj-conv} also holds. The projection operator
$\proj_n^{\b,\g}: L^2(\VV_0^{d+1}, \varphi_{\b,\g}) \mapsto \CV_n(\VV_0^{d+1}, \varphi_{\b,\g})$  
is defined by 
\begin{equation}\label{eq:sf-projCone}
   \proj_n^{\b,\g} f(x,t) = b_{\b,\g} \int_{\VV_0^{d+1}} f(y,s) \Pb_n\big(\varpi_{\b,\g}; (x,t), (y,s) \big ) 
      \varphi_{\b,\g}(y,s) \d \s(y, s).
\end{equation}
By \eqref{eq:sfPbCone2} and the definition of the convolution operator, we have 
$$
  \proj_n^{\b,\g} f = f  \ast_{\b,\g} Z_{2n}^{2\a+\g+1}. 
$$
Let $\Kb_n\left(\varphi_{\b,\g}; \cdot,\cdot \right)$ be the reproducing kernel of $\Pi_n^{d+1}$, which is the kernel
of the partial sum $S_n(\varphi_{\b,\g}; f)$ of the Fourier series in the Jacobi polynomials on $\VV_0^{d+1}$. 

\begin{prop} 
Let $\b \ge -\f12$, $\g \ge -\f12$ and $\a = \frac{\b+ d-1}{2}$. For $(x,t), (y,s) \in \VV_0^{d+1}$, 
\begin{align} \label{eq:sfKnCone}
   \Kb_n \left(\varphi_{\b,\g}; (x,t),(y,s)\right)  = T_{\b, \g}
    \left[k_n \big(w_{2\a + \g + \f12, -\f12}; 2\{ \cdot\}^2-1, 1\big)\right ]\big((x,t),(y,s)\big).  
\end{align}
Furthermore, for $(y,s) \in \VV_0^{d+1}$, 
\begin{align} \label{eq:sfKnCone2}
     \Kb_n \big(\varphi_{\b, \g}; (0,0),(y,s)\big)  = k_n \big(w_{2\a,\g}; 1-2s, 1\big).
\end{align}
\end{prop}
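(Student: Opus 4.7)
The plan is to mimic, step by step, the proof of the corresponding statement on the solid cone, since on the surface we now have both the ingredients needed: the closed formula \eqref{eq:sfPbZn} expressing the reproducing kernel through the translation operator $T_{\b,\g}$, and the translation identity in part (1) of Lemma \ref{lem:translateTsf}. The proposition will split into two parts handled in sequence.

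For \eqref{eq:sfKnCone}, the starting point is the defining formula
\[
   \Kb_n\bigl(\varphi_{\b,\g};(x,t),(y,s)\bigr)
   =\sum_{k=0}^n \Pb_k\bigl(\varphi_{\b,\g};(x,t),(y,s)\bigr).
\]
Applying \eqref{eq:sfPbZn} termwise and using the linearity of $T_{\b,\g}$ (immediate from its definition \eqref{eq:sfTmg} as an integral operator) yields
\[
   \Kb_n\bigl(\varphi_{\b,\g};(x,t),(y,s)\bigr)
   = T_{\b,\g}\Bigl[\, \sum_{k=0}^n Z_{2k}^{2\a+\g+1}\Bigr]\bigl((x,t),(y,s)\bigr).
\]
The summand $Z_{2k}^{2\a+\g+1}$ carries the inconvenient index $2k$; here one invokes the quadratic transform \eqref{eq:Jacobi-Gegen} together with the normalization $Z_n^\l(u)=C_n^\l(1)C_n^\l(u)/h_n^\l$ to rewrite
\[
  Z_{2k}^{\l}(u)=\frac{P_k^{(\l-\f12,-\f12)}(1)\,P_k^{(\l-\f12,-\f12)}(2u^2-1)}{h_k^{(\l-\f12,-\f12)}},
\]
which allows summation from $k=0$ to $n$ against the Jacobi weight $w_{2\a+\g+\f12,-\f12}$ and produces exactly $k_n\!\left(w_{2\a+\g+\f12,-\f12};\,2\{\cdot\}^2-1,\,1\right)$. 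This gives \eqref{eq:sfKnCone}.

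For \eqref{eq:sfKnCone2}, I substitute $(x,t)=(0,0)$ into \eqref{eq:sfKnCone} and evaluate $\xi(0,0,y,s;z,v)$ from \eqref{eq:sf-xi}: the square-root term collapses to $0$ since $st=0$ and $\la x,y\ra=0$, so $\xi(0,0,y,s;z,v)=v_2\sqrt{1-s}$, independent of $z$ and $v_1$. Consequently the integrals over $z\in[-1,1]^2$ and $v_1\in[-1,1]$ in \eqref{eq:sfTmg} factor out as the normalizing constants of their respective weights, and what remains is
\[
   \Kb_n\bigl(\varphi_{\b,\g};(0,0),(y,s)\bigr)
   = c_\g \int_{-1}^1 k_n\!\Bigl(w_{2\a+\g+\f12,-\f12};\,2(1-s)v_2^2-1,\,1\Bigr)(1-v_2^2)^{\g-\f12}\,\d v_2.
\]
Finally, applying the Dirichlet--Mehler identity (displayed at the end of the proof of \eqref{eq:KnCone2}) with $a=2\a+\g+\f12$, $b=-\f12$ and $\tau=\g+\f12$ shifts the parameters to $(a-\tau,b+\tau)=(2\a,\g)$ and transforms the argument $2(1-s)v_2^2-1$ into $1-2s$, evaluated at each Jacobi coefficient in the partial sum. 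Summing over the degrees produces $k_n(w_{2\a,\g};1-2s,1)$, establishing \eqref{eq:sfKnCone2}.

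The main obstacle is essentially bookkeeping: one must be careful that the parameter identifications $(a,b,\tau)$ line up, that the quadratic-transform shift of index is justified, and that the collapse of $\xi(0,0,y,s;z,v)$ really eliminates all the $z$-dependence so that only the $v_2$-integral survives. None of these steps is novel in this paper, as all of them appear in the solid-cone proof of the analogous identities \eqref{eq:KnCone}--\eqref{eq:KnCone2}; the surface proof differs only in that the translation operator $T_{\b,\g}$ has one fewer integration variable and the underlying kernel $\Pb_n(\varphi_{\b,\g})$ is built from spherical harmonics rather than ball polynomials.
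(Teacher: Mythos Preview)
Your proof is correct and follows exactly the approach the paper takes for the analogous solid-cone proposition (whose surface version the paper omits, citing that it is carried out with ``minuscule modification''). The two key steps---rewriting $Z_{2k}^{2\a+\g+1}$ via the quadratic transform \eqref{eq:Jacobi-Gegen} to sum into the Jacobi partial-sum kernel, and collapsing $\xi(0,0,y,s;z,v)=v_2\sqrt{1-s}$ before applying the Dirichlet--Mehler identity with $(a,b,\tau)=(2\a+\g+\tfrac12,-\tfrac12,\g+\tfrac12)$---are precisely those in the paper's proof of \eqref{eq:KnCone}--\eqref{eq:KnCone2}.
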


\begin{cor}
For $\b \ge 0$, $\g \ge -\f12$ and $\a = \f{\b+d-1}{2}$, 
\begin{equation} \label{eq:sfSnCone}
   S_n \left(\varpi_{\b,\g}; f\right) = f \ast_{\b,\g} k_n \big(w_{2\a + \g + \f12, -\f12}; 2\{ \cdot\}^2-1, 1\big). 
\end{equation}
\end{cor}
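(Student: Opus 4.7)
The plan is to derive this identity by directly combining the preceding proposition (equation \eqref{eq:sfKnCone}) with the very definition of the convolution $\ast_{\b,\g}$ on the surface of the cone. Nothing new is required; the corollary simply packages into a single formula the two facts that have just been established.

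First I would write out the $n$-th partial sum as an integral operator against the reproducing kernel of $\Pi_n^{d+1}$ with respect to $\la\cdot,\cdot\ra_{\b,\g}$:
\begin{equation*}
   S_n(\varphi_{\b,\g};f)(x,t) = b_{\b,\g}\int_{\VV_0^{d+1}} f(y,s)\,\Kb_n\big(\varphi_{\b,\g};(x,t),(y,s)\big)\,\varphi_{\b,\g}(y,s)\,\d\s(y,s).
\end{equation*}
Next I would substitute \eqref{eq:sfKnCone}, which expresses $\Kb_n(\varphi_{\b,\g};\cdot,\cdot)$ as $T_{\b,\g}$ applied to the univariate Fourier--Jacobi kernel
$g(u):=k_n\big(w_{2\a+\g+\f12,-\f12};2u^2-1,1\big)$.

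Before invoking the definition of convolution one must check the hypotheses under which $T_{\b,\g}$ was introduced in Definition \ref{defn:sfTmg}, namely that $g$ be an even element of $L^1([-1,1],w_{2\a+\g+1})$. Evenness is automatic, since $g$ depends on $u$ only through $u^2$; integrability is trivial because $k_n(w_{a,b};\cdot,1)$ is a polynomial of degree $n$ in its first argument. With these checks in place, the right-hand side of the equality produced by substitution is precisely $(f\ast_{\b,\g} g)(x,t)$, yielding \eqref{eq:sfSnCone}.

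I do not anticipate any genuine obstacle: all analytic content has already been absorbed into \eqref{eq:sfKnCone} (which itself rests on the quadratic transform \eqref{eq:Jacobi-Gegen} used to rewrite the Gegenbauer sum $\sum_k Z_{2k}^{2\a+\g+1}$ as a Jacobi kernel indexed $0\le k\le n$) and into Lemma \ref{lem:translateTsf} (which guarantees $T_{\b,\g}g((x,t),\cdot)$ is an admissible integrand against $\varphi_{\b,\g}$). The corollary is therefore essentially bookkeeping, parallel to the solid-cone identity \eqref{eq:SnCone} whose proof proceeds along the same three lines.
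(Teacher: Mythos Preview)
Your proposal is correct and is exactly the (implicit) argument the paper intends: the corollary is stated without proof because it follows immediately by writing $S_n$ as the integral of $f$ against $\Kb_n$, invoking \eqref{eq:sfKnCone}, and recognizing the result as the definition of $f\ast_{\b,\g}g$. Your added checks that $g$ is even and integrable are appropriate and do not depart from the paper's line of reasoning.
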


Finally, we can deduce the convergence of the Ces\`aro $(C,\delta)$ means as follows. 
 
\begin{thm}
For $\b \ge -1$ and $\g \ge -\f12$, define $\l_{\b,\g}: = \b+\g+d$. Then, the Ces\`aro $(C,\delta)$ 
means for $\varphi_{\b,\g}$ on $\VV_0^{d+1}$ satisfy 
\begin{enumerate} [\quad 1.]
\item if $\delta \ge \l_{\b,\g} + 1$, then $S_n^\delta(\varphi_{\b,\g}; f)$ is nonnegative if $f$ is nonnegative;
\item $S_n^\delta (\varphi_{\b,\g}; f)$ converge to $f$ in $L^1(\VV_0^{d+1}, \varphi_{\b,\g})$ norm or
$C(\VV_0^{d+1})$ 
norm if $\delta > \l_{\b,\g}$ and only if $\delta > \l_{\b,\g}$ when $\g = -\f12$. 
\end{enumerate}
\end{thm}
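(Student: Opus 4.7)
The plan is to mirror the solid-cone argument verbatim, replacing each appeal to $T_{\mu,\b,\g}$ by the corresponding surface translation $T_{\b,\g}$ of Definition~\ref{defn:sfTmg}, and invoking the surface analogues \eqref{eq:sfKnCone}, \eqref{eq:sfKnCone2} and the $L^p$ bound \eqref{eq:sfTbound} of Lemma~\ref{lem:translateTsf}. Note that with $\a=\tfrac{\b+d-1}{2}$ we have $2\a+\g+1=\b+\g+d=\l_{\b,\g}$, so the index on the Gegenbauer kernel appearing in \eqref{eq:sfPbZn} is exactly $\l_{\b,\g}$.

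First I would write the Ces\`aro kernel in the convolution form
\[
 \Kb_n^\delta\bigl(\varphi_{\b,\g};(x,t),(y,s)\bigr)
   = T_{\b,\g}\!\left[k_n^\delta\bigl(w_{2\a+\g+\f12,-\f12};2\{\cdot\}^2-1,1\bigr)\right]\!\bigl((x,t),(y,s)\bigr),
\]
which follows by summing \eqref{eq:sfPbZn} against $\binom{n-k+\delta}{n-k}/\binom{n+\delta}{n}$ and running the quadratic-transform calculation that already produced \eqref{eq:sfKnCone}. For (1), $T_{\b,\g}$ is an averaging operator with a nonnegative measure, so it preserves nonnegativity; Gasper's theorem tells us $k_n^\delta(w_{a,b};u,v)\ge0$ for $\delta\ge a+b+2$, which with $a=2\a+\g+\tfrac12$, $b=-\tfrac12$ gives $\delta\ge 2\a+\g+2=\l_{\b,\g}+1$, and the nonnegativity of $S_n^\delta(\varphi_{\b,\g};f)$ for $f\ge0$ follows.

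For the sufficiency in (2), it suffices by duality to show
\[
 \sup_{(x,t)\in\VV_0^{d+1}}\,b_{\b,\g}\!\int_{\VV_0^{d+1}}\!\bigl|\Kb_n^\delta(\varphi_{\b,\g};(x,t),(y,s))\bigr|\,\varphi_{\b,\g}(s)\,d\s(y,s)<\infty
\]
uniformly in $n$. By \eqref{eq:sfTbound} applied to the even function $u\mapsto k_n^\delta(w_{2\a+\g+\f12,-\f12};2u^2-1,1)$, this supremum is dominated by
\[
 c\int_{-1}^{1}\bigl|k_n^\delta(w_{2\a+\g+\f12,-\f12};2s^2-1,1)\bigr|(1-s^2)^{2\a+\g+1}\,ds,
\]
which after the substitution $u=2s^2-1$ becomes, up to a constant, the $L^1$-norm of $k_n^\delta(w_{2\a+\g+\f12,-\f12};\cdot,1)$ against $(1-u)^{2\a+\g+\f12}(1+u)^{-\f12}$. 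The classical bound for Fourier--Jacobi Ces\`aro means asserts this is uniformly bounded iff $\delta>\max\{2\a+\g+\f12,-\f12\}+\tfrac12=2\a+\g+1=\l_{\b,\g}$.

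For the necessity when $\g=-\tfrac12$, specialize $(x,t)=(0,0)$ in \eqref{eq:sfKnCone2} (or its Ces\`aro analogue), giving
\[
 \Kb_n^\delta\bigl(\varphi_{\b,\g};(0,0),(y,s)\bigr)=k_n^\delta\bigl(w_{2\a,\g};1-2s,1\bigr);
\]
using \eqref{eq:intVsf} the $L^1\bigl(\varphi_{\b,\g}\bigr)$-norm on the surface reduces (after integrating over $\sph$) to
\[
 \frac1{b_{\b,\g}\o_d}\int_0^1\bigl|k_n^\delta(w_{2\a,\g};1-2s,1)\bigr|\,s^{2\a}(1-s)^\g\,ds,
\]
which by the sharp Jacobi result \cite[Theorem 9.1.3]{Sz} is bounded in $n$ iff $\delta>\max\{2\a,\g\}+\tfrac12$. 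With $\g=-\tfrac12$ (and $2\a\ge0$) this reads $\delta>2\a+\tfrac12=\l_{\b,\g}$, which forces the stated necessity and completes the proof. The main (and only) thing to check carefully is the substitution $u=2s^2-1$ in the sufficiency step, which is why the quadratic-transform $a,b$ of the Jacobi kernel come out to $(2\a+\g+\f12,-\f12)$ rather than the raw $(2\a,\g)$; once that bookkeeping is right everything reduces to the one-dimensional Fourier--Jacobi theory already available in the literature.
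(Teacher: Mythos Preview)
Your proposal is correct and follows exactly the route the paper intends: the paper omits the proof entirely, stating at the start of Section~9 that ``All proofs can be carried out as in the case of solid cone, most with minuscule modification, and will be omitted,'' and you have faithfully transcribed the solid-cone argument with the surface operators $T_{\b,\g}$, \eqref{eq:sfKnCone}, \eqref{eq:sfKnCone2}, and \eqref{eq:sfTbound} in place of their solid counterparts. The bookkeeping $2\a+\g+1=\l_{\b,\g}$ and the Gasper/Szeg\H{o} thresholds are all checked correctly; the only slip is the harmless constant in front of the necessity integral (it should be $b_{\b,\g}\o_d$, not its reciprocal), which does not affect the boundedness conclusion.
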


\section{Generalizations to reflection invariant weight functions}
\setcounter{equation}{0}

In our study so far, we have limited to the weight functions on the cone that arise from the classical weight
functions on the unit ball and the unit sphere, since they are the most interesting cases and they already 
capture the essence of our study. In this last section, we point out further extensions. 

\subsection{Spherical harmonics with reflection group invariance}
A profound extension of spherical harmonics is Dunkl's theory of $h$-harmonics associated with 
reflection groups \cite{D89, DX}. Let $G$ be a reflection group with a reduced root system $R$. Let 
$v \mapsto \k_v$ be a nonnegative multiplicity function defined on $R$ with the property that it is a constant 
on each conjugate class of $G$. Then the Dunkl operators \cite{D89} are defined by 
\begin{equation} \label{eq:Dunkl-Op}
    D_i f(x) = \partial_i  f(x) + \sum_{v \in R_+} \k_v \frac{f(x) - f (x \s_v)}{\la x,\s_v \ra} v_i,  \qquad i =1,2 , \ldots,d,
\end{equation}
where $x \s_v := x - 2\la x, v \ra v /\|v\|^2$, $R_+$ is a set of positive roots. These first order differential-difference operators commute in the sense that $D_i D_j = D_j D_i$ for $1 \le i, j \le d$. The operator 
$$
\Delta_h = D_1^2+ \cdots + D_d^2
$$
plays the role of the Laplace operator. An $h$-harmonic polynomial of degree $n$ is a polynomial $Y \in \CP_n^d$ 
that satisfies $\Delta_h f = 0$. Such polynomials are orthogonal with respect to the inner product \cite{D91}
\begin{equation} \label{eq:Dunkl-hk}
  \la f,g\ra_\k = a_\k \int_\sph f(x) g(x) h_\k^2(x) \d\s(x), \qquad h_\k(x) = \prod_{v\in R_+} |\la x,v\ra|^{\k_v},
\end{equation}
on the sphere, where $a_\k$ is a normalization constant so that $\la 1,1\ra_\k =1$. Let $\CH_n^d(h_\k^2)$
be the space of $h$-harmonics of degree $n$, which has the same dimension as that of $\CH_n^d$. 
Let $\Delta_{h,0}$ be the restriction of $\Delta_h$ on the sphere, which is the analog of the Laplace--Beltrami
operator. Then $\CH_n^d(h_\k^2)$ is the eigenspace of the operator $\Delta_{h,0}$:  
\begin{equation} \label{eq:Delta_h0}
   \Delta_{h,0} Y^h = - n (n+2 \l_\k) Y^h, \qquad \l_\k := |\k| + \tfrac{d-2}{2},
\end{equation}
where $|\k| = \sum_{v\in R_+} \k_v$. A linear operator, denoted by $V_\kappa$, that satisfies the relations 
\begin{equation*} 
    D_i V_\k  = V_\k \partial_i, \qquad 1 \le i \le d,
\end{equation*}
is called an intertwining operator, which is uniquely determined if it also satisfies $V_\k 1 = 1$ and 
$V_\k \CP_n^d \subset \CP_n^d$. Let $\Pb(h_\k^2; \cdot,\cdot)$ be the reproducing kernel of 
$\CH_n^d(h_\k^2)$. The kernel has a closed form in terms of the intertwining operator $V_\k$,
\begin{equation} \label{eq:Pb-hk}
\Pb(h_\k^2; x, y) = V_\k \left[Z_n^{\l_\k} (\la \cdot, y\ra)\right](x), \qquad x,y \in \sph,
\end{equation}
where $Z_n^\l$ is defined in \eqref{eq:Zn}. 
The explicit formula of $V_\k$ is known only in the case of $G = \ZZ_2^d$ and the weight function
\begin{equation}\label{eq:hkZ2d} 
     h_\k(x) = \prod_{i=1}^d |x_i|^{\k_i}, \qquad \k_i \ge 0 
\end{equation}
invariant under $\ZZ_2^d$. In this case, the intertwining operator is a multiple beta integral, which 
gives in particular the closed formula for the reproducing kernel \cite{X97}
\begin{equation} \label{eq:PbZ2d}
     \Pb(h_\k^2; x, y) = c_\k^h \int_{[-1,1]^d} C_n^{\l_\k} (x_1 y_1 t_1 + \cdots + x_d y_d t_d) 
        \prod_{i=1}^d (1+t_i)(1-t_i^2)^{\k_i-1} \d t,
\end{equation}
where $c_k^h = c_{\k_1-\f12} \cdots c_{\k_d-\f12}$ with $c_\l$ defined by \eqref{eq:c_l}.

\subsection{Orthogonal structure on the surface of the cone}
On the surface of the cone $\VV_0^{d+1}$, we can extend the inner product \eqref{eq:innerSF} to 
$$
  \la f,g\ra_{\k,w} (t) = b_{\k} \int_{\VV_0^{d+1}}  f(x,t) g(x,t) h_\k^2(x) w(t) d\s(x,t)
$$
with $h_\k$ in \eqref{eq:Dunkl-hk} invariant under a reflection group. Much of what we have done in
Section 7 can be extend in this more general setting with $\a = \frac12(\b+ d-1)$ replaced by 
$\a = \frac12(\b+|\k| + d-1)$.  

For example, with the Jacobi weight function $w= \varphi_{\b,\g}$, we consider 
$$
   \varPhi_{\k,\b,\g}(t) = h_\k^2(x) t^\b(1-t)^\g, \qquad (x,t) \in \VV_0^{d+1}. 
$$
Similar to \eqref{eq:sfOPbasis}, we can give an orthogonal basis of $\CV_n(\VV_0^{d+1}, \varPhi_{\k,\b,\g})$ 
by 
\begin{equation} \label{OPbasis-refl}
  S_{m, \ell}^n (x,t) = P_{n-m}^{(2m + 2\a,\g)} (t) Y_\ell^m (x), \quad 0 \le m \le n, \,\, 1 \le \ell \le \dim \CH_m^d(h_\k^2),
\end{equation}
where $\{Y_\ell^m: 1 \le \ell \le \dim \CH_m^d\}$ denote an orthonormal basis of $\CH_m^d(h_\k^2)$.

\begin{thm} \label{thm:sfDEJ-refl}
Let $d \ge 2$. Every $u\in \CV_n(\VV_0^{d+1}, \varPhi_{\k, -1,\g})$ satisfies the differential-difference
equation
\begin{equation}\label{eq:eigenSf-refl}
   D_{\k,-1, \g} u =  -n (n+ |\k| + \g+ d-1) u,  
\end{equation}
where $D_{\k,-1, \g} = D_{\k,-1,\g}(x,t)$ is the second order linear differential-difference operator
\begin{align*}
D_{\k,-1, \g} : = & \, t(1-t)\partial_t^2 + \big(|\k| + d-1 - (|\k| + d+\g)t \big) + t^{-1} \Delta_{h,0}^{(x)},
\end{align*}
where $\Delta_{h,0}^{(x)}$ is the operator in variable $x \in \sph$. 
\end{thm}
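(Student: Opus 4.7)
The plan is to adapt the proof of Theorem~\ref{thm:DEsfJ} essentially verbatim, with ordinary spherical harmonics replaced by Dunkl $h$-harmonics and the Laplace--Beltrami operator $\Delta_0$ replaced by its Dunkl counterpart $\Delta_{h,0}$. By linearity, it suffices to verify the eigenvalue equation on the basis polynomials $S_{m,\ell}^n$ from \eqref{OPbasis-refl}. On $\VV_0^{d+1}$ we may write $x = t\xi$ with $\xi \in \sph$, and homogeneity of $Y_\ell^m \in \CH_m^d(h_\k^2)$ gives $Y_\ell^m(x) = t^m Y_\ell^m(\xi)$. Thus $S_{m,\ell}^n(x,t) = f(t)\, Y_\ell^m(\xi)$ with $f(t) := g(t)\, t^m$ and $g(t) := P_{n-m}^{(2m+2\a,\g)}(1-2t)$.

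First I would apply the Jacobi differential equation \eqref{eq:JacobiDE} to $g$ and then repeat the purely mechanical computation used in the proof of Theorem~\ref{thm:DEsfJ}: differentiate $f = g t^m$ twice, substitute the Jacobi equation for $g''$, and group the terms according to whether they carry a factor of $1/t$. The output is a second-order ODE for $f$ of exactly the shape
\begin{equation*}
t(1-t) f''(t) + \bigl((2\a+1) - (2\a+\g+2)t\bigr) f'(t) = -n(n+2\a+\g+1)\, f(t) + \frac{m(m+2\a)}{t}\, f(t),
\end{equation*}
identical to the non-reflection version but with $2\a$ now encoding the additional contribution of the reflection group (so that, when $\b=-1$, the parameter $2\a$ aligns with $2\l_\k$).

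The decisive new input is the Dunkl spectral identity \eqref{eq:Delta_h0}, i.e.\ $\Delta_{h,0} Y_\ell^m = -m(m+2\l_\k) Y_\ell^m$, which takes the place of the classical relation $\Delta_0 Y = -m(m+d-2) Y$ used in the non-reflection proof. Because $\a$ has been chosen so that at $\b=-1$ the coefficient $m(m+2\a)$ coincides with $m(m+2\l_\k)$, the $1/t$ term on the right-hand side above is precisely $-t^{-1} \Delta_{h,0}^{(x)} S_{m,\ell}^n$. Moving it to the left-hand side, what remains on the left is exactly $D_{\k,-1,\g}$ applied to $S_{m,\ell}^n$, and what remains on the right is the constant $-n(n+|\k|+\g+d-1)$ multiplied by $S_{m,\ell}^n$.

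The only point requiring care — and really the only substantive departure from the proof of Theorem~\ref{thm:DEsfJ} — is the interpretation of $\Delta_{h,0}^{(x)}$ acting on a function of $(x,t) \in \VV_0^{d+1}$. One should view it exactly as in \eqref{eq:cone-eigenSf}: as the Dunkl angular operator in $\xi = x/t$, with $t$ held fixed. With this understood, homogeneity of $Y_\ell^m$ gives $\Delta_{h,0}^{(x)} S_{m,\ell}^n = -m(m+2\l_\k) S_{m,\ell}^n$ on the nose, and the rest of the argument is a routine transcription of the earlier calculation — there is no genuine obstacle once the normalization of $2\a$ and the interpretation of $\Delta_{h,0}^{(x)}$ are fixed.
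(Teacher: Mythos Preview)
Your proposal is correct and follows essentially the same route as the paper's own proof, which simply says to repeat the argument of Theorem~\ref{thm:DEsfJ} with $\b$ replaced by $\b+|\k|$ and to invoke the Dunkl spectral identity \eqref{eq:Delta_h0} in place of \eqref{eq:sph-harmonics}. Your write-up is in fact more detailed than the paper's, spelling out the ODE for $f$ and the matching of $m(m+2\a)$ with $m(m+2\l_\k)$ at $\b=-1$, and correctly flagging the interpretation of $\Delta_{h,0}^{(x)}$ as the only point needing care.
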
 

\begin{proof}
The proof follows that of Theorem \ref{thm:DEsfJ} almost verbatim if we replace $\b$ by $\b+|\k|$ and use
\eqref{eq:Delta_h0} instead of \eqref{eq:sph-harmonics} in the last step. 
\end{proof}

An analog of Theorem \ref{thm:DEsfL} also holds. Furthermore, we can also state a closed formula for
the reproducing kernel of $\CV_n(\VV_0^{d+1}, \varPhi_{\k,\b,\g})$, which we state only for 
$h_\k$ in \eqref{eq:hkZ2d} invariant under $\ZZ_2^d$, because of the explicit formula \eqref{eq:PbZ2d}. 

\begin{thm}  
Let $d \ge 2$, $\k_i \ge 0$, $\b \ge -1$ and $\g \ge -\f12$. Let $\a = \frac12 (|\k|+\b+ d-1)$. 
Then, for $(x,t), (y,s) \in \VV_0^{d+1}$,
\begin{align} \label{eq:sfPbCone-refl}
 \Pb_n & \big(\varPhi_{\k,\b,\g}; (x,t), (y,s)\big) = 
    \wh c_{\b,\g} \int_{[-1,1]^{d+4}} Z_{2n}^{2\a+\g+1} \big(\xi (x,t,y,s; z,v,u)\big)\\
  &  \times   (1-z_1)^{\f{d-2}{2}} (1+z_1)^{\f{\b-1}{2}} (1-z_2^2)^\f{\b}{2} \d z (1-v_1^2)^{\a-1}(1-v_2^2)^{\g-\f12} \d v
     \notag \\
  &  \times     \prod_{i=1}^d (1+u_i)(1-u_i^2)^{\k_i-1} \d u, \notag
\end{align} 
where $\wh c_{\b,\g}= c_\k^h c_{\frac{d-2}{2}, \frac{\b -1}{2}} c_{\f{\b+1}{2}} c_{\a-\f12} c_\g$ and  
$\xi (x,t,y,s; z,v) \in [-1,1]$ is given by 
\begin{align*} 
    \xi (x,t,y,s; z,v)  = & \frac{v_1}2 \sqrt{2st + (1-z_1) (x_1y_1 u_1 +\cdots x_dy_d u_d) + (1+z_1) z_2 s t}\\ 
           & + v_2 \sqrt{1-t}\sqrt{1-s}, 
\end{align*}
and the identity holds under the limit \eqref{eq:limit-int} whenever $\b=-1$, $\g = -\f12$ or $\k_i = 0$.
\end{thm}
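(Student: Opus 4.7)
The plan is to mimic the proof of Theorem~\ref{thm:sfPbCone2} verbatim, with the single modification that the ordinary addition formula \eqref{eq:additionF} is replaced by the $h$-harmonic reproducing-kernel formula \eqref{eq:PbZ2d} for the $\ZZ_2^d$-invariant weight \eqref{eq:hkZ2d}. The only new ingredient is an extra layer of $u$-integration coming from the explicit beta-integral representation of the intertwining operator $V_\k$ in the $\ZZ_2^d$ case; after that layer is introduced, every subsequent manipulation is identical to what is done in Theorems~\ref{thm:PnConeSf}--\ref{thm:sfPbCone2}.

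First, I would expand the reproducing kernel via the orthogonal basis \eqref{OPbasis-refl} to obtain
\begin{align*}
\Pb_n(\varPhi_{\k,\b,\g}; (x,t),(y,s)) = \sum_{m=0}^n \frac{P_{n-m}^{(2\a+2m,\g)}(1-2t) P_{n-m}^{(2\a+2m,\g)}(1-2s)}{H_{m,n}} \sum_{\ell} Y_\ell^m(x) Y_\ell^m(y),
\end{align*}
and use the homogeneity $Y_\ell^m(x) = t^m Y_\ell^m(x/t)$ on $\VV_0^{d+1}$ to recognize the inner $\ell$-sum as $t^m s^m \Pb_m(h_\k^2; x/t, y/s)$. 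Then \eqref{eq:PbZ2d} rewrites this as an integral over $u \in [-1,1]^d$ of a Gegenbauer polynomial of index $\l_\k$ evaluated at $(x_1 y_1 u_1+\cdots+x_d y_d u_d)/(st)$, against the intertwining measure $\prod_i(1+u_i)(1-u_i^2)^{\k_i-1}\,du$. This yields the $h$-harmonic analogue of \eqref{eq:sfPbCone}, in which the argument $\la x,y'\ra$ of the Gegenbauer polynomial in the ordinary case is replaced by the $V_\k$-integrated expression in $u$.

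Second, I would apply \eqref{eq:ZtoZ} to raise the Gegenbauer index up to $\a$, which produces the $(z_1,z_2)$-integration with the weights $(1-z_1)^{(d-2)/2}(1+z_1)^{(\b-1)/2}(1-z_2^2)^{\b/2}\,dz$ exactly as in the proof of Theorem~\ref{thm:PnConeSf}. Third, I would apply the triangle closed formula \eqref{eq:closedPV} to convert the residual $m$-sum of Jacobi polynomials into a $(v_1,v_2)$-integral of $Z_{2n}^{2\a+\g+1}(\xi)$ against $(1-v_1^2)^{\a-1}(1-v_2^2)^{\g-\f12}\,dv$. Combining the three integrations produces the claimed $(d+4)$-fold integral, with $\xi(x,t,y,s;z,v,u)$ being precisely the expression obtained from the $\xi$ of Theorem~\ref{thm:sfPbCone2} by the substitution $\la x,y\ra \mapsto x_1 y_1 u_1+\cdots+x_d y_d u_d$. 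The boundary cases $\b=-1$, $\g=-\f12$, or some $\k_i = 0$ are handled by the limit \eqref{eq:limit-int}, exactly as in the earlier theorems.

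The main obstacle is purely bookkeeping: one must track the normalization constants through the three substitutions so that the product $\wh c_{\b,\g} = c_\k^h\,c_{\frac{d-2}{2},\frac{\b-1}{2}}\,c_{\frac{\b+1}{2}}\,c_{\a-\f12}\,c_\g$ emerges, and verify $\xi \in [-1,1]$ by combining the Cauchy--Schwarz estimate from the end of Theorem~\ref{thm:PnCone2} with the trivial bound $|u_i|\le 1$. No essentially new analytic input is required beyond the explicit formula \eqref{eq:PbZ2d} for the intertwining operator, which is precisely what makes the $\ZZ_2^d$ case tractable; for a general reflection group $G$ the same strategy would reduce the problem to proving an analogous closed-form integral representation for $V_\k$, which is not available.
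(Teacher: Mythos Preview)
Your strategy is exactly the implicit argument the paper relies on; the theorem is stated in Section~10.2 without a written proof, the surrounding text making clear that it follows from Theorems~\ref{thm:PnConeSf}--\ref{thm:sfPbCone2} by replacing the spherical addition formula \eqref{eq:additionF} with the $\ZZ_2^d$ reproducing-kernel formula \eqref{eq:PbZ2d}, which is precisely your three-step plan (insert the $u$-layer from \eqref{eq:PbZ2d}, raise the index via \eqref{eq:ZtoZ}, then close up via \eqref{eq:closedPV}).

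One bookkeeping caveat: after applying \eqref{eq:PbZ2d} the Gegenbauer index you must raise is $\l_\k = |\k|+\tfrac{d-2}{2}$, not $\tfrac{d-2}{2}$. Hence in \eqref{eq:ZtoZ} one has $\l=\l_\k$ and $\sigma=\a-\l_\k=\tfrac{\b+1-|\k|}{2}$, so the $z$-weights actually come out as $(1-z_1)^{|\k|+\frac{d-2}{2}}(1+z_1)^{\frac{\b-|\k|-1}{2}}(1-z_2^2)^{\frac{\b-|\k|}{2}}$ with constant $c_{\l_\k,\,\sigma-1}c_\sigma$, and one needs $\b>|\k|-1$ for the integral form (with the boundary handled by \eqref{eq:limit-int}). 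Your claim that these weights are ``exactly as in the proof of Theorem~\ref{thm:PnConeSf}'' reproduces what the paper displays, but the displayed exponents and constants appear to have dropped the $|\k|$-shift; your method is right, only this arithmetic detail needs adjusting.
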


Although the closed formula \eqref{eq:sfPbCone-refl} is complicated, its one-dimensional structure allows
us to carry out the narrative that we developed so far for the more general weight functions on the surface
$\VV_0^{d+1}$. We could define the translation operator and the convolution in this general setting, 
and establish similar properties as those in Section \ref{sect:SF9}. Instead of stating the results, which
carries little additional difficulty, we shall state only a result for the convergence of the Ces\`aro means. 

\begin{thm}
For $\k_i \ge 0$, $\b \ge -1$ and $\g \ge -\f12$, define $\l_{\k,\b,\g}: =|\k|+\b+\g+d$. Then, the Ces\`aro $(C,\delta)$ 
means for $\varPhi_{\k,\b,\g}$ on $\VV_0^{d+1}$ satisfy 
\begin{enumerate} [\quad 1.]
\item if $\delta \ge \l_{\b,\g} + 1$, then $S_n^\delta(\varPhi_{\k,\b,\g}; f)$ is nonnegative if $f$ is nonnegative;
\item $S_n^\delta (\varPhi_{\k,\b,\g}; f)$ converge to $f$ in $L^1(\VV^{d+1}, \varphi_{\b,\g})$ 
norm or $C(\VV^{d+1})$ norm if $\delta > \l_{\b,\g}$.
\end{enumerate}
\end{thm}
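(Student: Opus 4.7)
The plan is to run the proof of the preceding (non-reflection) Ces\`aro theorem for the surface of the cone essentially verbatim, with the translation operator $T_{\b,\g}$ replaced by a reflection-invariant analog $T_{\k,\b,\g}$ extracted from the closed formula \eqref{eq:sfPbCone-refl}. Accordingly, I first define $T_{\k,\b,\g}g$ by substituting a generic even function $g(\xi(x,t,y,s;z,v,u))$ for the specific $Z_{2n}^{2\a+\g+1}$ appearing in \eqref{eq:sfPbCone-refl}, with the same weights and normalizing constant $\wh c_{\b,\g}$. Then, just as in \eqref{eq:sfPbZn}, we have
\[
\Pb_n\bigl(\varPhi_{\k,\b,\g};(x,t),(y,s)\bigr) = T_{\k,\b,\g} Z_{2n}^{2\a+\g+1}\bigl((x,t),(y,s)\bigr).
\]
The reflection-invariant analog of Lemma~\ref{lem:translateTsf} is then immediate: the reproducing property of $\Pb_n$ gives the Fourier coefficient identity for even polynomials $g$, which extends by density, and the $L^p$-bound $\|T_{\k,\b,\g}g((x,t),\cdot)\|_{L^p(\VV_0^{d+1},\varPhi_{\k,\b,\g})}\le\|g\|_{L^p([-1,1],w_{2\a+\g+1})}$ follows from Riesz--Thorin once one checks the $p=\infty$ case (trivial, since the measure in $T_{\k,\b,\g}$ is a probability measure for $g=1$) and the $p=1$ case (from the Fourier identity with $n=0$).

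Next, using the quadratic transform \eqref{eq:Jacobi-Gegen} exactly as in the derivation of \eqref{eq:sfKnCone}, the Ces\`aro kernel becomes
\[
\Kb_n^\delta\bigl(\varPhi_{\k,\b,\g};(x,t),(y,s)\bigr) = T_{\k,\b,\g}\!\left[k_n^\delta\bigl(w_{2\a+\g+\f12,-\f12};\,2\{\cdot\}^2-1,\,1\bigr)\right]\!\bigl((x,t),(y,s)\bigr),
\]
where now $\a=\tfrac12(|\k|+\b+d-1)$, so $2\a+\g+1=\l_{\k,\b,\g}$.

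For part (1), the classical result of Gasper tells us that $k_n^\delta(w_{a,b};u,1)\ge 0$ whenever $\delta\ge a+b+2$; with $a=2\a+\g+\f12$ and $b=-\f12$ this is the condition $\delta\ge\l_{\k,\b,\g}+1$. Since $T_{\k,\b,\g}$ is the integral against a nonnegative measure, nonnegativity of the kernel (and hence of $S_n^\delta(\varPhi_{\k,\b,\g};f)$ for $f\ge 0$) follows immediately.

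For part (2), by the standard reduction it suffices to bound
\[
\max_{(x,t)\in\VV_0^{d+1}}\; b_{\k,\b,\g}\!\int_{\VV_0^{d+1}}\!\bigl|\Kb_n^\delta\bigl(\varPhi_{\k,\b,\g};(x,t),(y,s)\bigr)\bigr|\,\varPhi_{\k,\b,\g}(y,s)\,d\s(y,s)
\]
uniformly in $n$. Applying the $L^1$-version of the extended $T_{\k,\b,\g}$-bound, this is dominated by
\[
\int_{-1}^1 \bigl|k_n^\delta\bigl(w_{2\a+\g+\f12,-\f12};\,2s^2-1,1\bigr)\bigr|(1-s^2)^{2\a+\g+1}\,ds,
\]
which after the change $u=2s^2-1$ is (up to a constant) the standard $L^1$-norm of the Jacobi Ces\`aro kernel at the endpoint; this is bounded iff $\delta>\max(2\a+\g+\f12,-\f12)+\f12=2\a+\g+1=\l_{\k,\b,\g}$, by the classical theorem on Ces\`aro summability of Fourier--Jacobi series (see \cite[Thm.~9.1.3]{Sz}). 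The main obstacle is purely bookkeeping: one must confirm that the reflection-invariant $T_{\k,\b,\g}$ built from \eqref{eq:sfPbCone-refl} really satisfies the same $L^p$-contraction as in Lemma~\ref{lem:translateTsf}, i.e.\ that the weight in the $u$-integral (the $\ZZ_2^d$ intertwining-operator integrand) integrates to $1$ and is nonnegative, so that the Riesz--Thorin interpolation from $p=1$ and $p=\infty$ goes through unchanged. Granted this, both assertions follow exactly as in the preceding (non-reflection) theorem.
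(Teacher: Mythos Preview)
Your proposal is correct and matches the paper's intended approach: the paper does not write out a proof for this theorem, but explicitly says that one can define the translation operator and convolution in the reflection-invariant setting and ``establish similar properties as those in Section~\ref{sect:SF9}'' with ``little additional difficulty,'' which is precisely the program you carry out. Your one flagged bookkeeping point---that the $\ZZ_2^d$ intertwining integrand $\prod_i(1+u_i)(1-u_i^2)^{\k_i-1}$ is nonnegative and, with the constant $c_\k^h$, integrates to $1$---is easily checked (the $u_i$-term is odd and drops out), so the Riesz--Thorin step goes through unchanged.
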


\subsection{Orthogonal structure on the cone}
On the unit ball $\BB^d$ we can also consider the reflection invariant weight function 
$$
       \varpi_{\k,\mu}(x) = h_\k^2(x) (1-\|x\|^2)^{\mu-\f12} 
$$
for an $h_\k$ invariant under a reflection group $G$, and study orthogonal polynomials with respect 
to $\varpi_{\k,\mu}$ on the ball \cite[Section 8.1]{DX}. In this setting, the space $\CV_n^d(\varpi_{\k,\mu})$
is the eigenspace of a second order differential-difference operator 
\cite[Theorem 8.1.3]{DX}: for all $ u \in \CV_n^d(\varpi_{\k,\mu})$, 
\begin{equation}\label{eq:diff-ball-refl}
   \left(\Delta_h - \la x,\nabla\ra^2 - 2 \l_{\k,\mu} \la x,\nabla\ra \right)u = - n (n+2\l_{\k,\mu}) u, \quad 
     \l_{\k,\mu} = |\k| + \mu + \tfrac{d-1}{2}. 
\end{equation}
Furthermore, the reproducing kernel $\Pb(\varpi_{\k,\mu}; \cdot,\cdot)$ of $\CV_n^d(\varpi_{\k,\mu})$
satisfies a closed form in terms of the intertwining operator $V_\k$, which we state only for the 
case of $G=\ZZ_2^d$. For $h_k$ in \eqref{eq:hkZ2d}, we have  \cite[Theorem 8.1.6]{DX} 
\begin{align} \label{eq:Pb-ball-refl}
\Pb(\varpi_{\k,\mu}; x, y) = c_\k^h c_\mu \int_{[-1,1]^{d+1}} Z_n^{\l_{\k,\mu}} ( x_1y_1 t_1 +\cdots x_d y_d t_d
        +  x_{d+1} y_{d+1} s ) \\
       \times \prod_{i=1}^d (1+t_i)(1-t_i^2)^{\k_i-1} (1-s^2)^{\mu-\f12}\d t \d s, \notag
\end{align}
where $x_{d+1} = \sqrt{1-\|x\|^2}$ and $y_{d+1} = \sqrt{1-\|y\|^2}$. 

On the cone $\VV_0^{d+1}$, we can replace $\varpi_\mu$ by $\varpi_{\k,\mu}$ to study orthogonal 
structure for more general weight functions. For example, instead of the weight function \eqref{eq:W-cone},
we can consider 
$$
  W_{\k, \mu,\b, \g}(x,t): = h_\k^2(x) (t^2-\|x\|^2)^{\mu-\f12} t^\b (1-t)^\g, \quad   \mu > -\tfrac12, \quad \g > -1, 
$$
with $h_\k$ as in \eqref{eq:Dunkl-hk} invariant under a reflection group. Most of our results in Section 3 and 4 
can be extended in this more general setting with $\a = \mu+ \frac12(\b+ d-1)$ replaced by 
$\a = |\k|+ \mu+ \frac12(\b+ d-1)$.  In particular, let $\{P_\kb^{n-m}(\varpi_{\k,\mu};x): |\kb|=n\}$ be an orthonormal 
basis of $\CV_{n-m}^d(\varpi_{\kb,\mu})$ on the unit ball $\BB^d$. Then an orthogonal basis of 
$\CV_n(\VV^{d+1},  W_{\k, \mu,\b, \g})$, analogous to that of \eqref{eq:coneJ}, is given by 
\begin{equation} \label{eq:coneJ-refl}
  Q_{m, \kb}^n (x,t) = P_{n-m}^{(2\a + 2m,\g)} (1-2t) t^m P_\kb^n(\varpi_{\k,\mu};x), \quad 0 \le m \le n, \,\, |\kb| =n-m.
\end{equation}

In the case of $\b = 0$, we have the following extension of Theorem \ref{thm:DEconeJ}:

\begin{thm} \label{thm:DEconeJ-refl} 
For $\mu > -\tfrac12$, $\g > -1$ and $\k_i \ge 0$, define $\a = |\k|+ \mu +\frac{d-1}{2}$. Then 
every $u \in \CV_n(\VV^{d+1},W_{\k, \mu,0, \g})$ satisfies the differential-difference equation
\begin{equation}\label{eq:cone-eigen-refl}
   \CD_{\k,\mu,\g} u =  -n (n+ 2\a+\g+1) u,
\end{equation}
where $\CD_{\k,\mu,\g} = \CD_{\k, \mu,\g}(x,t)$ is the second order linear differential-difference operator
\begin{align*}
  \CD_{\k, \mu,\g} : = &     t(1-t)\partial_{tt} +  2 (1-t) \la x,\nabla_x \ra \partial_t + t \Delta_h^{(x)} - 
       \la x,\nabla_x \ra^2    \\ 
  &     + (2\a+1)\partial_t - (2\a+\g+2 ) \big( \la x,\nabla_x\ra + t \partial_t\big) +\la x,\nabla_x\ra, 
\end{align*}
where $\nabla_x$ and $\Delta_h^{(x)}$ denote the operators acting on the $x$ variable. 
\end{thm}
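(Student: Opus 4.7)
The plan is to mirror the proof of Theorem \ref{thm:DEconeJ} almost verbatim, with just two systematic replacements: the ordinary Laplacian $\Delta_x$ gives way to Dunkl's operator $\Delta_h^{(x)}$ acting in the $x$-variable, and the constant $2\mu+d-1$ is replaced by $2\a = 2|\k|+2\mu+d-1$. Because the original argument never uses anything special about $\Delta_x$ beyond the ball eigenfunction equation \eqref{eq:diffBall}, and because the reflection-invariant analogue \eqref{eq:diff-ball-refl} has the same structure with $\l_{\k,\mu} = \a$, the entire cascade of identities in the original proof carries over.

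First, by linearity it suffices to verify \eqref{eq:cone-eigen-refl} on the orthogonal basis \eqref{eq:coneJ-refl}, so I take $u(x,t) = g(t)H(x,t)$ with $g(t) = P_{n-m}^{(2\a+2m,\g)}(1-2t)$ and $H(x,t) = t^m h(x/t)$, where $h = P_\kb^m(\varpi_{\k,\mu}) \in \CV_m^d(\varpi_{\k,\mu})$. The three inputs to the computation are: (i) the Jacobi equation \eqref{eq:JacobiDE} applied to $g$, which reads
\begin{align*}
t(1-t)g''(t) + \big(2\a+2m+1 - (2\a+\g+2m+2)t\big)g'(t) = -(n-m)(n+m+2\a+\g+1)g(t);
\end{align*}
(ii) the $(x,t)$-homogeneity of $H$ of degree $m$, giving $(t\partial_t + \la x,\nabla_x\ra)H = mH$, exactly as in \eqref{eq:diff2}; and (iii) the reflection-invariant analogue of \eqref{eq:diff3},
\begin{align*}
\big(t^2\Delta_h^{(x)} - \la x,\nabla_x\ra^2 - 2\a\la x,\nabla_x\ra\big)H = -m(m+2\a)H.
\end{align*}
To derive (iii) I would note that since each Dunkl operator $D_i$ is first order in the differential part and the difference quotients are homogeneous of degree $-1$ in $x$, the same scaling that gives $\Delta_x(h(x/t)) = t^{-2}(\Delta h)(x/t)$ yields $\Delta_h^{(x)}(h(x/t)) = t^{-2}(\Delta_h h)(x/t)$; evaluating \eqref{eq:diff-ball-refl} at $y = x/t$ and multiplying by $t^m$ then produces (iii).

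With these three ingredients in hand, I would run the computation of \eqref{eq:diff4} and its consequences verbatim, replacing $\Delta_x$ by $\Delta_h^{(x)}$ and $2\mu + d - 1$ by $2\a$ throughout. The identity used to collapse the Jacobi parameters,
\begin{align*}
(n-m)(n+m+2\a+\g+1) + m(m+2\a+\g+1) = n(n+2\a+\g+1),
\end{align*}
is just the original identity under the parameter substitution, and the bookkeeping of the intermediate steps (adding $g(t)$ times (iii) and applying (ii) once more to convert $t\partial_t^2 H$) does not depend on whether the Laplacian is classical or Dunkl. Assembling the resulting operator gives exactly $\CD_{\k,\mu,\g}$, with $-\la x,\nabla_x\ra^2$ appearing as displayed in the theorem; note that unlike in the proof of Theorem \ref{thm:DEconeJ}, there is no need to expand $\la x,\nabla_x\ra^2$ into $\la x,\nabla_x\ra + \sum_{i,j} x_ix_j\partial_{x_i}\partial_{x_j}$, so the final form remains as stated.

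I do not anticipate a genuine obstacle: the only item requiring any thought is step (iii), i.e.\ verifying that $\Delta_h^{(x)}$ interacts with the dilation $x \mapsto x/t$ in exactly the same way as $\Delta_x$ does. Once that scaling is recorded, the rest is a mechanical transcription of the proof of Theorem \ref{thm:DEconeJ}.
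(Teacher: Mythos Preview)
Your proposal is correct and matches the paper's own proof essentially verbatim: the paper simply says to follow the proof of Theorem \ref{thm:DEconeJ}, replacing $\mu$ by $\mu+|\k|$ and using \eqref{eq:diff-ball-refl} in place of \eqref{eq:diffBall}, and explicitly notes that one should \emph{not} expand $\la x,\nabla_x\ra^2$ at the end. Your observation about the scaling behaviour of $\Delta_h^{(x)}$ under $x\mapsto x/t$ is the only point the paper leaves implicit, and you have identified it correctly.
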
 

\begin{proof}
The proof follows that of Theorem \ref{thm:DEconeJ} almost verbatim if we replace $\mu$ by $\mu+|\k|$
and use \eqref{eq:diff-ball-refl} instead of \eqref{eq:diffBall}. We choose not, however, to carry out the
last step of expanding $\la x,\nabla_x\ra^2$, which would lead to a final expression of $\CD_{\k,\mu,\g}$ 
that contains both $\Delta_x$ and $\Delta_h^{(x)}$. 
\end{proof}

We can also derive a closed formula for the reproducing kernel $\Pb_n(W_{\k, \mu, \b, \g}; \cdot,\cdot)$
of the space $\CV_n(\VV^{d+1},W_{\k, \mu, \b, \g})$, which we again state only for $h_\k$ in \eqref{eq:hkZ2d} 
invariant under $\ZZ_2^d$, using the explicit formula \eqref{eq:PbZ2d}. 

\begin{thm} \label{thm:PbCone2G-refl}
Let $d \ge 2$. For $\mu \ge 0$, $\b \ge -\f12$, $\g \ge -\f12$ and $\k_i \ge 0$, let 
$\a = |\k| + \mu + \frac{\b+d-1}{2}$. Then 
\begin{align*}
  \Pb_n \big(W_{\k, \mu,\b, \g}; &\, (x,t), (y,s)\big)  
 \wh c_\k^h c_{\mu,\b}  c_{\a -\f{1}{2}} c_\g \int_{[-1,1]^{d+5}}  Z_{2n}^{2 \a+\g+1} (\wh \xi (x, t, y, s; z, u, v, p)) \\
 &\times \prod_{i=1}^d (1+p_i)(1-p_i^2)^{\k_i-1} (1-z_1)^{\mu+\f{d-1}{2}} (1+z_1)^{\f{\b}2 -1} (1-z_2^2)^{\f{\b-1}{2}}  \notag \\  
     & \times (1-u^2)^{\mu-1} du (1-v_1^2)^{\a - 1}(1-v_2^2)^{\g-\f12}  \d p \d z \,\d u\, \d v \notag
\end{align*}
where $\wh \xi (x,t, y,s; u, v,p,q) \in [-1,1]$ is defined by 
\begin{align*}
 \wh \xi (x,t, y,s; u, v)  =  v_2 \sqrt{1-t}\sqrt{1-s}  +\frac12 v_1 \sqrt{\rho(x,t,y,s,p, z)},
\end{align*}
in which 
\begin{align*}
  \rho(x,t,y,s,p, z) = &\, 2 s t + (1+z_1)z_2 st \\
        & + (1-z_1) \left( p_1 x_1 y_1+\cdots+ p_d x_d y_d + \sqrt{t^2-\|x\|^2} \sqrt{s^2-\|y\|^2} \, u \right). 
 \end{align*}
In the case $\mu =  0$ or $\b = -\f12$ or $\g = -\f12$ or $\k_i=0$, the identity holds under the 
limit \eqref{eq:limit-int}. 
\end{thm}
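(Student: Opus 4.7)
The plan is to adapt the proof of Theorem~\ref{thm:PbCone2G} almost verbatim, substituting the $h$-harmonic reproducing kernel \eqref{eq:Pb-ball-refl} for the ball in place of the classical formula \eqref{eq:PnBall}. Starting from the orthogonal basis $\{Q_{m,\kb}^n\}$ in \eqref{eq:coneJ-refl}, the separation-of-variables computation used in Proposition~\ref{prop:OPconeJ} (with $t^{2\mu+d-1}$ replaced by $t^{2\mu+|\k|+d-1}$, since $h_\k(tu)=t^{|\k|}h_\k(u)$) gives the expansion
\begin{equation*}
  \Pb_n\bigl(W_{\k,\mu,\b,\g};(x,t),(y,s)\bigr)
   = \sum_{m=0}^n \frac{P_{n-m}^{(2\a+2m,\g)}(1-2t)\,P_{n-m}^{(2\a+2m,\g)}(1-2s)}{H_{m,n}^{\a,\g}}\, t^m s^m\, \Pb_m\!\bigl(\varpi_{\k,\mu};\tfrac{x}{t},\tfrac{y}{s}\bigr),
\end{equation*}
which parallels the corresponding formula in the proof of Theorem~\ref{thm:PnCone}, now with $\a=|\k|+\mu+\tfrac{\b+d-1}{2}$.

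Next I would feed \eqref{eq:Pb-ball-refl} into the ball kernel above. This introduces the integration over $p\in[-1,1]^d$ with weight $\prod_i(1+p_i)(1-p_i^2)^{\k_i-1}$ and over $u\in[-1,1]$ with weight $(1-u^2)^{\mu-1}$; after pulling a factor $1/(ts)$ out of the two ``ball coordinates,'' the argument of $Z_m^{\l_{\k,\mu}}$ becomes exactly
\begin{equation*}
  \frac{1}{ts}\bigl(p_1 x_1 y_1+\cdots+p_d x_d y_d + \sqrt{t^2-\|x\|^2}\sqrt{s^2-\|y\|^2}\,u\bigr).
\end{equation*}
Since $\l_{\k,\mu}=\a-\tfrac{\b}{2}$, I would then apply the index-raising identity \eqref{eq:ZtoZ} with $\l=\l_{\k,\mu}$ and $\s=\tfrac{\b}{2}$ to promote $Z_m^{\l_{\k,\mu}}$ to $Z_m^{\a}$; this brings in the $z_1,z_2$ integrations against $(1-z_1)^{\l_{\k,\mu}}(1+z_1)^{\b/2-1}(1-z_2^2)^{(\b-1)/2}$. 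At that point the $m$-sum has the same shape as the one appearing in the proof of Theorem~\ref{thm:PbCone2G}, so the triangle formula \eqref{eq:closedPV} (equivalently \eqref{eq:trianglePn2} with $(\a,\b,\g)\leftarrow(\a-\tfrac12,\a-\tfrac12,\g)$) collapses the sum into a single $Z_{2n}^{2\a+\g+1}$ integrated against $v_1,v_2$ with weight $(1-v_1^2)^{\a-1}(1-v_2^2)^{\g-1/2}$, producing the claimed closed formula.

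The main obstacle is purely bureaucratic bookkeeping: one must verify that the constants $c_\k^h$, $c_{\mu-\f12}$, $c_{\l_{\k,\mu},\,\b/2-1}$, $c_{\b/2}$, $c_{\a-\f12}$, $c_\g$ coming from the three successive integral representations combine correctly into $\wh c_\k^h\wh c_{\mu,\b}c_{\a-\f12}c_\g$, and that the successive substitutions reassemble precisely the displayed $\rho(x,t,y,s,p,z)$ and hence $\wh\xi$. The bound $|\wh\xi|\le 1$ reduces, as in the proof of Theorem~\ref{thm:PnCone2}, to Cauchy--Schwarz: since $|p_i|\le 1$, one has $|p_1 x_1y_1+\cdots+p_d x_dy_d|\le\|x\|\,\|y\|$, which combined with $\sqrt{t^2-\|x\|^2}\sqrt{s^2-\|y\|^2}\le ts-\|x\|\|y\|$ yields $|\wh\xi|\le 1$. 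The boundary cases $\mu=0$, $\b=-\tfrac12$, $\g=-\tfrac12$ and $\k_i=0$ are then handled by \eqref{eq:limit-int} exactly as in Theorem~\ref{thm:PbCone2G}.
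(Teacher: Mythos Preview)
Your proposal is correct and matches the paper's intended argument: the paper states this theorem without proof, relying on the pattern established throughout Section~10 that each result follows its non-reflection counterpart verbatim with the obvious substitutions. Your chain --- expand in the basis~\eqref{eq:coneJ-refl}, insert the ball kernel~\eqref{eq:Pb-ball-refl}, raise the Gegenbauer index via~\eqref{eq:ZtoZ} with $\lambda=\lambda_{\kappa,\mu}$ and $\sigma=\beta/2$, then collapse the $m$-sum via~\eqref{eq:closedPV} --- is precisely the proof of Theorem~\ref{thm:PbCone2G} with~\eqref{eq:PnBall} replaced by~\eqref{eq:Pb-ball-refl}.
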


As in the previous subsection, we have no difficulty to carry out our narrative for the solid cone $\VV^{d+1}$
to more general weight functions $W_{\k,\mu,\b,\g}$, but choose not to state them. The corresponding result
for the Ces\`aro means of the Fourier orthogonal series is given below: 

\begin{thm}
For $\k \ge0, \mu \ge 0$, $\b \ge 0$ and $\g\ge -\f12$, define $\l_{\mu,\b,\g}: = 2\mu+ |\k| + \b+\g+d$. Then, 
the Ces\`aro $(C,\delta)$ means for $W_{\k, \mu,\b,\g}$ on $\VV^{d+1}$ satisfy 
\begin{enumerate} [\quad 1.]
\item if $\delta \ge \l_{\k,\mu,\b,\g} + 1$, then $S_n^\delta(W_{\k,\mu,\b,\g}; f)$ is nonnegative if $f$ is nonnegative;
\item $S_n^\delta (W_{\k,\mu,\b,\g}; f)$ converge to $f$ in $L^1(\VV^{d+1}, W_{\k,\mu,\b,\g})$ norm 
or $C(\VV^{d+1})$ norm if $\delta > \l_{\k, \mu,\b,\g}$.
\end{enumerate}
\end{thm}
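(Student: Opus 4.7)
The plan is to mirror, line by line, the argument of the analogous theorem in Section~6 (for $W_{\mu,\b,\g}$), with $\a = \mu + \tfrac{\b+d-1}{2}$ replaced by $\a = |\k| + \mu + \tfrac{\b+d-1}{2}$. The key ingredient is the reflection-invariant closed formula for $\Pb_n(W_{\k,\mu,\b,\g};\cdot,\cdot)$ stated in Theorem~\ref{thm:PbCone2G-refl}, which already shows that the kernel has a one-dimensional structure of the form $T_{\k,\mu,\b,\g} Z_{2n}^{2\a+\g+1}$ for the obvious generalization of the translation operator in Definition~\ref{defn:Tmg}.

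First I would set up the translation operator $T_{\k,\mu,\b,\g}$ on $\VV^{d+1}$ by replacing the integrand $g(\wh\xi)$ in \eqref{eq:Tmg2} with the measure appearing in Theorem~\ref{thm:PbCone2G-refl} (in particular, absorbing the additional integrals in $p_i \in [-1,1]$ against $(1+p_i)(1-p_i^2)^{\k_i-1}\d p_i$). Exactly as in Lemma~\ref{lem:translateT}, this operator reproduces orthogonal polynomials in the sense \eqref{eq:FHcone} for each $Q_n \in \CV_n(\VV^{d+1}, W_{\k,\mu,\b,\g})$, and satisfies the $L^p$ contraction
\[
\bigl\| T_{\k,\mu,\b,\g} g((x,t),(\cdot,\cdot))\bigr\|_{L^p(\VV^{d+1}, W_{\k,\mu,\b,\g})}
   \le \|g\|_{L^p([-1,1],w_{2\a+\g+1})},
\]
the $p=\infty$ case being trivial, the $p=1$ case following from nonnegativity and the reproducing identity applied to $n=0$, and $1<p<\infty$ from Riesz--Thorin. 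Using the quadratic transform \eqref{eq:Jacobi-Gegen} exactly as in the derivation of \eqref{eq:KnCone}, this yields
\[
\Kb_n^\delta\bigl(W_{\k,\mu,\b,\g}; (x,t),(y,s)\bigr) = T_{\k,\mu,\b,\g}\!\left[k_n^\delta\bigl(w_{2\a+\g+\f12,-\f12};\, 2\{\cdot\}^2-1,1\bigr)\right]\!\bigl((x,t),(y,s)\bigr).
\]

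For the nonnegativity statement (1), I would invoke Gasper's result \cite{Gas} that $k_n^\delta(w_{a,b};u,1)\ge 0$ whenever $\delta \ge a+b+2$. With $a = 2\a+\g+\f12$ and $b=-\f12$ this reads $\delta \ge 2\a+\g+2 = \l_{\k,\mu,\b,\g}+1$, and nonnegativity of $T_{\k,\mu,\b,\g}$ (which is an integral against a nonnegative measure) then propagates nonnegativity of $f$ to $S_n^\delta(W_{\k,\mu,\b,\g};f)$. For the sufficiency in (2), one estimates the operator norm of $S_n^\delta$ by its maximal $L^1$ integral of $|\Kb_n^\delta|$; by the contraction bound this is dominated by
\[
\int_{-1}^1 \bigl|k_n^\delta(w_{2\a+\g+\f12,-\f12}; 2s^2-1, 1)\bigr|(1-s^2)^{2\a+\g+1}\,\d s,
\]
which after the substitution $u = 2s^2-1$ is a Jacobi integral of the form $\int |k_n^\delta(w_{2\a+\g+\f12,-\f12};u,1)|(1-u)^{2\a+\g+\f12}(1+u)^{-\f12}\,\d u$. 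Classical results on the one-dimensional Jacobi Cesàro means (see \cite[Ch.~9]{Sz}) give uniform boundedness precisely when $\delta > \max\{2\a+\g+\f12, -\f12\} = 2\a+\g+\f12$; combined with $\delta > \l_{\k,\mu,\b,\g} = 2\a+\g+1$ this covers the convergence claim in both the $L^1(\VV^{d+1},W_{\k,\mu,\b,\g})$ and $C(\VV^{d+1})$ norms via the standard density-plus-uniform-boundedness argument.

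The main obstacle is not algebraic but notational: one must carefully verify that the translation operator arising from the more elaborate kernel in Theorem~\ref{thm:PbCone2G-refl} (with the extra $p$-integration coming from \eqref{eq:PbZ2d}) is still a positive operator of total mass one and satisfies the reproducing identity \eqref{eq:FHcone}. The positivity is clear since $(1+p_i)(1-p_i^2)^{\k_i-1} \ge 0$ for $\k_i \ge 0$ and $p_i \in [-1,1]$, and total mass one follows from specializing to $n=0$. Once this setup is in place, every step of the proof of the $\k=0$ theorem in Section~6 transfers verbatim with $\a$ redefined, and the Jacobi-line computations require no change.
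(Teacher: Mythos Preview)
Your proposal is correct and follows exactly the route the paper intends: the paper does not give a separate proof of this theorem but explicitly says the narrative of Sections~5--6 carries over to $W_{\k,\mu,\b,\g}$ with $\a$ replaced by $|\k|+\mu+\tfrac{\b+d-1}{2}$, and you have spelled out precisely those steps (defining $T_{\k,\mu,\b,\g}$ from Theorem~\ref{thm:PbCone2G-refl}, verifying positivity and the $L^p$ contraction, applying Gasper for part~(1), and reducing part~(2) to the one-variable Jacobi Ces\`aro bound). One tiny slip: the classical Jacobi threshold is $\delta > \max\{a,b\}+\tfrac12$, so with $a=2\a+\g+\tfrac12$, $b=-\tfrac12$ this gives $\delta > 2\a+\g+1 = \l_{\k,\mu,\b,\g}$ directly, not $2\a+\g+\tfrac12$; your final conclusion is unaffected.
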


\bigskip\noindent
{\bf Acknowledgement}. I thank an anonymous referee for her/his numerous corrections and helpful suggestions.

\end{document}